\newcommand{\N}{\mathbb{N}}
\newcommand{\Z}{\mathbb{Z}}
\newcommand{\R}{\mathbb{R}}
\newcommand{\C}{\ensuremath{\mathbb{C}}}
\newcommand{\Ch}{\hat{\mathbb{C}}}
\newcommand{\eps}{\ensuremath{\varepsilon}}
\newcommand{\id}{\operatorname{id}}
\newcommand{\re}{\operatorname{Re}}
\newcommand{\im}{\operatorname{Im}}
\newcommand{\cl}[1]{\overline{#1}}
\newcommand{\dist}{\operatorname{dist}}
\newcommand{\B}{\mathcal{B}}
\newcommand{\Blog}{\B_{\log}}
\newcommand{\Hlog}{\mathcal{H}_{\log}}
\newcommand{\F}{\mathcal{F}}
\newcommand{\sing}{\operatorname{sing}}
\newcommand{\wt}[1]{\widetilde{#1}}
\newcommand{\T}{\mathcal{T}}
\renewcommand{\H}{\mathbb{H}}
\renewcommand{\phi}{\varphi}
\newcommand{\gammat}{\tilde{\gamma}}
\newcommand{\alphat}{\tilde{\alpha}}
\newcommand{\betat}{\tilde{\beta}}
\newcommand{\res}{\operatorname{res}}
\renewcommand{\rho}{\varrho}
\newcommand{\V}{\mathcal{V}}
\newcommand{\s}{\underline{s}}
\newcommand{\sm}{\setminus}
\newcommand{\FInv}[1]{F^{-1}_{#1}}
\newcommand{\wh}[1]{\widehat{#1}}
\def\swappedhead#1#2#3{%
  \thmnumber{\@upn{\the\thm@headfont #2\@ifnotempty{#1}{.~}}}%
  \thmname{#1}%
  \thmnote{ {\the\thm@notefont(#3)}}}
 \newtheoremstyle{changebreak}
   {9pt}
   {9pt}
   {\itshape}
   {}
   {\bfseries}
   {.}
   {\newline}
   {}
 \newtheoremstyle{claimstyle}%
   {}
   {}
   {\normalfont}
   {}
   {\itshape}
   {.}
   { }
   {\thmnote{#3}}
\theoremstyle{claimstyle}
\newtheorem*{varclaim}{}
\newenvironment{claim}[1][Claim]{\begin{varclaim}[#1]}{\end{varclaim}}
\newenvironment{remark}[1][Remark]{\begin{varclaim}[#1]}{\end{varclaim}}
\newcommand{\proofsymbol}{\mbox{$\blacksquare$}}
\newcommand{\subproofsymbol}{\mbox{$\triangle$}}
\newenvironment{subproof}{\begin{proof}%
               }{%
             \end{proof}}
\newcommand{\qedoutsideproof}{%
    \pushQED{\qed}\qedhere}
\theoremstyle{changebreak}
\newtheorem{thm}{Theorem}[section]
\newtheorem{defn}[thm]{Definition}
\newtheorem{lem}[thm]{Lemma}
\newtheorem{cor}[thm]{Corollary}
\newtheorem{prop}[thm]{Proposition}
\newtheorem{deflem}[thm]{Definition and Lemma}
\renewcommand{\mod}{\operatorname{mod}\,}
\numberwithin{equation}{section}
\newcommand{\hide}[1]{}
\newcommand{\Csep}[2]{C_{#1}^{#2}}
\newcommand{\Csepp}[2]{\dot C_{#1}^{#2}}
\newcommand{\Cseppp}[2]{\ddot C_{#1}^{#2}}
\newcommand{\sep}[1]{\dot #1}
\newcommand{\sepp}[1]{\ddot #1}
\newcommand{\norm}{\operatorname{n}}
\newcommand{\Blognorm}{\Blog^{\norm}}
\newcommand{\Csep}[2]{C_{#1}^{#2}} 
\newcommand{\Csepp}[2]{\hat C_{#1}^{#2}} 
\newcommand{\Cseppp}[2]{\hat{\hat C}_{#1}^{#2}}
\newcommand{\Wiggle}{\mathcal{W}}
\newlength{\captionwidth}
\title[Dynamic rays of entire functions]{Dynamic rays of bounded-type 
           entire functions}
\date{\today}
\author[G.\ Rottenfu{\ss}er]{G\"unter Rottenfu{\ss}er}
\address{G\"orresstra{\ss}e 20, 80798 M\"unchen, Germany}
\email{guenter.rottenfusser@gmx.net}
\author[J.\ R\"uckert]{Johannes R\"uckert}
\address{School of Engineering and Science, Jacobs University Bremen (formerly International University Bremen), P.O.~Box 750 561, 
28725 Bremen, Germany}
\email{jrueckert@world.iu-bremen.de}
\author[L.\ Rempe]{Lasse Rempe}
\address{Department for Mathematical Sciences, University of Liverpool, Liverpool L69 7ZL,
United Kingdom}
\email{l.rempe@liv.ac.uk}
\author[D.\ Schleicher]{Dierk Schleicher}
\address{School of Engineering and Science, Jacobs University Bremen, P.O.~Box 750 561, 
28725 Bremen, Germany}
\email{dierk@jacobs-university.de}
\thanks{
Several of us were supported by a Doktorandenstipendium (G.R., J.R.) or a postdoctoral fellowship (L.R.) of the German Academic Exchange Service DAAD, and by a grant from the Fields institute in Toronto (J.R., L.R., D.S.). L.R.\ was
 partially supported by an EPSRC grant at the University of Warwick,
 and later by EPSRC Fellowship EP/E052851/1.
The authors gratefully acknowledge support by the European networks CODY and 
 HCAA, and their predecessors.}
\begin{document}

\begin{abstract} 
We construct an entire function in the Eremenko-Lyubich class $\B$
 whose Julia set has only bounded path-components. 
 This answers a question of Eremenko from 1989 in the negative.
   
 On the other hand, we show that for many functions in $\B$, 
  in particular those of finite order, every
  escaping point can be connected to $\infty$ by a curve of escaping
  points. This gives a partial positive answer to the aforementioned
  question of Eremenko, and answers a question of Fatou from 1926.
 \end{abstract}

\maketitle

 \section{Introduction}

The dynamical study of transcendental 
entire functions was initiated by
  Fatou in 1926 \cite{fatou}. 
  As well as being a fascinating
  field in its own right, the topic has
  recently received
  increasing
  interest partly
  because transcendental phenomena
  seem to be deeply linked with the
  behavior of polynomials in
  cases where the degree gets
  large. A recent example is provided by
  the surprising results of
  Avila and Lyubich \cite{avilalyubichfeigenbaum2}, who showed
  that a constant-type Feigenbaum quadratic polynomial whose Julia
  set has positive measure would have hyperbolic dimension less than
  two. This phenomenon occurs naturally in transcendental dynamics, 
  see \cite{urbanskizdunik1}.
  Other interesting applications of
  transcendental dynamics
  include the study
  of the standard family of circle maps
  and the use of Newton's method
  to study zeros of transcendental
  functions.

In his seminal
  1926 article, Fatou observed
  that
  the Julia sets of certain explicit entire 
  functions, such as
  $z\mapsto r\sin(z)$, $r\in\R$, contain
  curves of points that escape to 
  infinity under iteration. He then 
  remarks
\begin{quotation}
Il serait int\'eressant de rechercher si cette propri\'et\'e n'appartiendrait pas \`a des substitutions beaucoup plus g\'en\'erales.
\footnote{ ``It would be interesting to investigate
              whether this property might not hold
              for much more general functions.''}
\end{quotation}

Sixty years later,
  Eremenko \cite{alexescaping}
  was the first to
  undertake a thorough
  study of the
   \emph{escaping set} 
\[
 I(f) := \{z\in\C\,:\, |f^{\circ n}(z)|\to \infty \} 
\]
  of an arbitrary
  entire transcendental function. 
  In particular, he showed that
  every component of $\cl{I(f)}$ is 
  unbounded, and asks whether in fact
  each component of $I(f)$ is 
  unbounded. We will call this problem (the weak form of)
  \emph{Eremenko's conjecture}. 
  He also states that
  \begin{quotation}
   It is plausible that the set $I(f)$ always has the following property:
    every point $z\in I(f)$ can be joined with $\infty$ by a curve in
    $I(f)$. 
  \end{quotation}
 This can be seen as making Fatou's original question 
   more precise, and
  will be referred to in the following as 
  the 
  \emph{strong form} of Eremenko's 
  conjecture.

 These problems are of particular 
  importance since the existence of such
   curves can be used to study 
   entire functions using
   combinatorial methods. This is
   analogous to the notion of 
  ``dynamic rays'' 
   of polynomials 
   introduced by Douady 
   and Hubbard \cite{orsay}, which
   has proved to be one of the 
   fundamental tools for the
   successful study of polynomial 
   dynamics. Consequently, Fatou's
   and Eremenko's questions are among
   the most prominent open problems
   in the field of transcendental
   dynamics. 

 We will show that, in general, the 
  answer to Fatou's question
  (and thus also to Eremenko's 
   conjecture in its strong form) is 
   negative,
  even when restricted to the {\em Eremenko-Lyubich class}
  $\B$ of entire functions with a
  bounded set of singular values. 
  (For such functions, all escaping 
  points lie in the Julia set. The class 
  $\B$ appears to be a very natural setting
  for this type of problem; compare also \cite{boettcher}.) 
 \begin{thm}[Entire Functions Without Dynamic Rays] \label{thm:counterexample}
  There exists a hyperbolic entire function $f\in\B$ 
   such that every path-connected
   component of $J(f)$ is bounded.
 \end{thm}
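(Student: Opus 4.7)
The plan is to construct $f$ in two stages: first build a combinatorial/geometric model $F$ in logarithmic coordinates with a delicate tract structure, then realise $F$ as the logarithmic transform of a genuine entire function in $\B$ with prescribed singular dynamics.

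Step 1 (Logarithmic model with wiggling tracts). Working in $\Blog$, I consider maps defined on a disjoint union of unbounded Jordan tracts $T_j$, each mapped conformally onto a right half-plane $\{\re w > Q\}$. I design a family of tracts whose boundaries oscillate increasingly wildly as $\re z\to\infty$: concretely, I insert a sequence of thin ``fingers'' at prescribed heights $y_n\to\infty$ of carefully chosen widths and aspect ratios. The combinatorics of the conformal parametrisation is chosen so that $F$ pushes neighbouring fingers at level $n$ to opposite sides of the bunch of fingers at level $n+1$. This is what will eventually force nearby escaping points to separate faster than any continuum could tolerate.

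Step 2 (Realisation as an entire function in $\B$). Following the Eremenko--Lyubich scheme, I glue $F$ on the tracts to a constant (or to an exponential model) on the complement, then use an approximation/surgery argument (Cauchy-integral construction or quasiconformal straightening of the resulting almost complex structure) to promote the model to an entire function. Up to a quasiconformal change of coordinates that is asymptotically conformal near infinity, this produces $f\in\B$ whose tract geometry matches the model. Because there is flexibility in choosing $\sing(f)$, I place the singular values inside the immediate basin of an attracting fixed point near $0$, making $f$ hyperbolic.

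Step 3 (No unbounded path-components). Suppose for contradiction that $\gamma\colon[0,1)\to J(f)$ is a continuous arc with $\gamma(t)\to\infty$ as $t\to 1$. Lifting $\gamma$ and each forward iterate to logarithmic coordinates yields curves $\gammat_n$ with $\re\gammat_n(t)\to\infty$. The wiggling built into the tracts gives a sharp expansion/separation estimate: two lifts that at some stage lie in distinct fingers at height $y_n$ are driven, after a uniformly bounded number of iterations, to points whose imaginary parts differ by more than $2\pi$. This forces $\gammat_n$ eventually to leave any single tract, contradicting continuity of $\gamma$ near $t=1$. Hence every path-component of $J(f)$ is bounded.

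The main obstacle is Step 1: the tracts must be wild enough for the separation in Step 3, while the resulting function must remain in $\B$, i.e.\ have a bounded singular set and only one asymptotic value (or none); simultaneously, the derivative on tracts must be controlled so that the quasiconformal realisation in Step 2 goes through. Balancing these three constraints --- wildness of the finger geometry, boundedness of $\sing(f)$, and a quantitative expansion estimate strong enough to rule out \emph{every} possible curve in $J(f)$ --- is the heart of the argument.
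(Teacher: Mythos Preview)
Your overall architecture---build a model $F\in\Blog$, realise it by an entire function via a Cauchy-integral construction, then argue no unbounded curve survives---matches the paper. The realisation step (your Step~2) is essentially what the paper does, and hyperbolicity comes for free once $\cl{\T}\subset\H$.

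The genuine gap is in Step~3, and it stems from a mismatch in Step~1. Your contradiction mechanism cannot work as stated. All points on a path-component of $J(F)$ share the same external address, so for every $n$ the entire image $F^{\circ n}(\gamma)$ lies in a \emph{single} tract, which by definition of $\Blog$ has height strictly less than $2\pi$. Thus two points of $F^{\circ n}(\gamma)$ can never differ in imaginary part by more than $2\pi$, and the curve can never be ``forced to leave a tract''; there is nothing to contradict. Separation estimates of the exponential type you allude to are true in $\Blog$, but they separate \emph{real} parts, and by themselves they do not preclude an unbounded curve---they are exactly what makes the \emph{positive} result (finite order $\Rightarrow$ rays) work.

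What is actually needed is a tract that wiggles in the \emph{real} direction: a single tract $T$ of height $<2\pi$ containing a sequence of vertical geodesics $C_k,\dot C_k$ (mapping to nested semicircles of radii $\rho_{k+1}<\dot\rho_{k+1}$ in $\H$) such that any subarc of $T$ joining $C_k$ to $\dot C_k$ must first retreat to real part $\leq\rho_k/2$. The contradiction is then a \emph{doubling} argument: if $\gamma\subset J_{\s}$ tends to $\infty$, then $F^{\circ k}(\gamma)$ must cross from $R_{m+k+1}$ to $\dot R_{m+k+1}$, so $F^{\circ(k-1)}(\gamma)$ crosses from $C_{m+k}$ to $\dot C_{m+k}$, hence dips back below $\rho_{m+k}/2$ and therefore contains \emph{two} disjoint subarcs crossing from $R_{m+k}$ to $\dot R_{m+k}$. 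Iterating, $\gamma$ contains $2^k$ disjoint subarcs between two fixed crosscuts, for every $k$---impossible for a curve. Your proposal contains neither this geometry nor this counting argument, and without them the proof does not go through.
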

\begin{remark}
 In fact, it is even possible to ensure that $J(f)$ contains no 
  nontrivial curves at all (Theorem \ref{thm:pointcomponents}).
\end{remark}

 On the other hand, we show that the 
  strong form of Eremenko's conjecture
  does hold for a large subclass of $\B$.
  Recall that $f$ has 
   \emph{finite order} if 
   $\log \log |f(z)| = O(\log|z|)$ as 
   $|z|\to\infty$.

\begin{thm}[Entire Functions With Dynamic Rays] 
\label{thm:positive}
  Let $f\in\B$ be a function of finite 
   order, or more generally
   a finite composition of such functions. 
  Then every point $z\in I(f)$ can be
   connected to $\infty$ by a curve 
   $\gamma$ such that
   $f^{\circ n}|_{\gamma}\to\infty$ uniformly. 
 \end{thm}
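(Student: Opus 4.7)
The plan is to work in logarithmic coordinates, following the Eremenko--Lyubich framework. After pre-composing with $\exp$, we replace $f$ by a map $F\colon \mathcal{T}\to\mathcal{H}$, where $\mathcal{H}$ is a right half-plane and $\mathcal{T}$ is a disjoint union of unbounded Jordan domains (logarithmic tracts), with $F$ a conformal isomorphism on each tract. The finite-order hypothesis will be translated, via classical estimates on the growth of conformal maps from a half-plane onto a tract (cf.\ the Denjoy--Carleman--Ahlfors theorem), into a quantitative geometric statement about $F$.

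The central notion I would introduce is a \emph{head-start condition}: a function $\phi\colon\R\to\R$ (essentially affine in the finite-order case) such that whenever $z,w$ lie in the same tract and $\re z > \phi(\re w)$, one has $\re F(z) > \phi(\re F(w))$, together with the estimate that $\re F(z)$ grows to $\infty$ whenever $\re z$ does. For a single finite-order $f\in\B$ this condition will come from a distortion/expansion computation inside a tract; for finite compositions I would check that the class of functions whose logarithmic transform satisfies a head-start condition is closed under composition, reducing the general case to the single-map case.

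Given the head-start condition, I would construct, for each admissible external address $\s=s_0s_1s_2\ldots$ (a sequence of tracts realized by some orbit), the set $J_\s$ of points $z\in\mathcal{T}_{s_0}$ with $F^n(z)\in\mathcal{T}_{s_n}$ for all $n$ and $\re F^n(z)\to\infty$. The head-start condition gives $J_\s$ a total order compatible with $\re F^n$ for large $n$; moreover, for a parameter $t$ beyond a threshold, there is a unique point $\gammat_\s(t)\in J_\s$ with a prescribed large real part on a suitable iterate, and these points sweep out a continuous, piecewise analytic curve $\gammat_\s\colon[t_0,\infty)\to J_\s$ along which all iterates go to $\infty$ uniformly. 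This is the ``ray tail'' for the address~$\s$.

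Finally, given an arbitrary $z\in I(F)$, its orbit determines an address~$\s$; since $\re F^n(z)\to\infty$, for some $N$ the point $F^N(z)$ is beyond the head-start threshold and hence lies on the ray tail $\gammat_{\sigma^N\s}$. Pulling this tail back by the $N$ inverse branches of $F$ (which, thanks to $F\in\Blog$, are strong contractions in the hyperbolic metric of $\mathcal{H}$ with uniform rate) yields a curve in $\mathcal{T}_{s_0}$ starting at $z$ and tending to $\infty$ along which all iterates escape. Projecting back to the $z$-plane via $\exp$ produces the required curve, and the statement for finite compositions follows. The main obstacle is the first step: verifying the head-start condition for finite-order functions and proving that it is preserved under composition, since the rest of the argument is essentially symbolic-topological once the geometric input is in place; a subtle point to watch is that the ray tail is only constructed above the threshold, so points below must be joined to their tail by an additional lifting argument rather than being themselves parametrised by the canonical curve.
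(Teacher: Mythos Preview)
Your proposal is correct and follows essentially the same architecture as the paper: pass to logarithmic coordinates, introduce a head-start condition, verify it for finite-order maps (and show it is preserved under composition), deduce that the sets $J_{\s}$ are arcs on which iterates escape uniformly, and then pull back to handle an arbitrary escaping point.

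The one place where your execution differs from the paper's is the construction of the ray tail from the head-start condition. You propose a direct parametrisation: build $\gammat_{\s}(t)$ as the unique point with a prescribed real part on a suitable iterate, in the style of the exponential-family arguments. The paper instead argues topologically: it shows that the speed ordering $\succ$ on $\wh{J_{\s}}$ induces the same topology as the ambient one, and then invokes the order-theoretic characterisation of the arc to conclude that each component of $\wh{J_{\s}}$ is an interval. To get that the relevant component is actually unbounded, the paper appeals to a Rippon--Stallard-type result (existence of an unbounded continuum in $J_{\s}$ near any point of $J^K_{\s}$). Your direct parametrisation would bypass this, but you should be aware that making ``unique point with prescribed large real part on a suitable iterate'' rigorous in the general $\Blog$ setting is more delicate than in the exponential case, since tracts need not be horizontal strips; the paper's topological route is chosen precisely because it avoids this issue cleanly. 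Either way, the overall strategy is the same.
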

\begin{remark}
Observe that while $\B$ is invariant under finite compositions, the property of
  having finite order is not.
\end{remark}

 \begin{figure}
  \begin{center}
   \includegraphics[width=.99\textwidth]{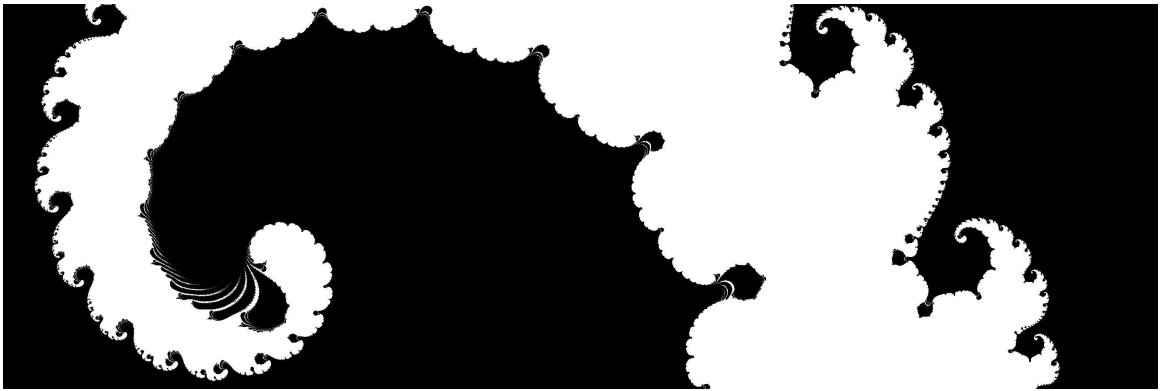}
  \end{center}
  \caption{Julia set for the Poincar\'e function around a
    repelling fixed point of a postcritically-finite quadratic polynomial. 
    This function belongs to $\B$ and has finite order; hence
    Theorem \ref{thm:positive} applies to it. (The Julia set, plotted in
    black, is nowhere dense, but some details
    are too fine to be visible; this results in the appearance of solid
    black regions in the figure.)
\label{fig:strahlen_spiral}}
 \end{figure}

Theorem \ref{thm:positive} applies to a large and natural class of functions, 
 extending considerably
 beyond those that were previously studied. 
 As an example, suppose that $p$ is a polynomial with connected Julia set,
 and let $\alpha$ be a repelling fixed point of $p$. By a classical theorem of
 K{\oe}nigs, $p$ is conformally conjugate to the linear map 
 $z\mapsto \mu z$ (where $\mu=p'(\alpha)$ is the multiplier of $\alpha$). 
 The inverse of this conjugacy extends to an entire function $\psi:\C\to\C$ 
 with $\psi(0)=\alpha$ and $\psi(\mu z) = f(\psi(z))$.
 This map 
 $\psi$, called a \emph{Poincar\'e function}, has finite order. The 
 set of singular values of $\psi$ is the postcritical set of $p$, which is
 bounded since $J(p)$ is connected. So $\psi\in\B$. 
 The properties of $\psi$ are 
 generally quite
 different from e.g.\ those of the commonly considered exponential and
 trigonometric functions; for example, if $\mu\notin\R$, then the 
 \emph{tracts} of $\psi$ will spiral near infinity. (Compare
 Figure \ref{fig:strahlen_spiral}.) 

 More generally, given any point
  $z\in J(p)$, we can find a sequence of rescalings of iterates of $p$
  that converges to an entire function $f:\C\to\C$ of finite order by the 
  ``Zalcman lemma'' (see e.g.\ \cite{zalcmannormal}). 
  Again, since $J(p)$ is connected, we have $f\in\B$. 
  Theorem
  \ref{thm:positive} implies that, for all such functions, and their
  finite compositions, the escaping set consists of rays. 

On the other hand, given $\eps>0$, 
  the counterexample from Theorem \ref{thm:counterexample}
 can be constructed such that
 $\log \log |f(z)| = O(|\log z|^{1+\eps})$
 (see Proposition \ref{prop:smallgrowthexample}), so Theorem \ref{thm:positive} is not far from
 being optimal. 

We remark that our methods are purely local in the sense that they apply
 to the dynamics of a --- not necessarily globally defined --- 
 function within any number of logarithmic singularities over $\infty$. 
 Roughly speaking, let $f$ be a function defined on a union of unbounded
  Jordan domains such that
  $f$ is a universal covering of $\{|z|>R\}$ on each of these domains,
  and such that only finitely many of the domains intersect
  any given compact set. 
  (In fact, our setting is even more general than this;
   see Section~\ref{sec:classB} for the class of functions we consider.)   
  We will provide sufficient conditions that ensure that
  every point $z\in I(f)$ eventually maps into 
  a curve in $I(f)$ ending at $\infty$. In particular, we show
  that these conditions are satisfied if $f$ has finite order of growth. 
  (Our treatment also allows us to discuss under 
   which conditions
   \emph{individual} escaping components, identified by their
   \emph{external addresses}, are curves to $\infty$.)

For meromorphic functions, we have the following corollary.
\begin{cor}[Meromorphic Functions With Logarithmic Singularities]
\label{cor:mero}
  Let $f:\C\to\Ch$ be a meromorphic function of finite order. 
  \begin{enumerate}
   \item Suppose that $f$ has only finitely many poles and the
     set of finite singular values of $f$ is bounded. Then every
     escaping point of $f$ can be connected to $\infty$ or to a pre-pole
     of $f$ by a curve consisting of escaping points.
   \item Suppose that 
     $f$ has a logarithmic singularity over $\infty$.
     Then $J(f)$ contains uncountably many
     curves to $\infty$ consisting of escaping points. 
  \end{enumerate}
 \end{cor}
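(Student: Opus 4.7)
The plan is to derive both parts from the local version of Theorem \ref{thm:positive} advertised in the paragraph immediately preceding the corollary, which applies to any map acting as a universal covering of $\{|w|>R\}$ on each of a locally finite family of unbounded Jordan domains, provided the finite-order growth condition is met.

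For (a), I would fix $R$ larger than the modulus of every pole and than every finite singular value of $f$. The preimage $f^{-1}(\{|w|>R\})$ then splits into finitely many bounded components, each a punctured neighborhood of a single pole on which $f$ is a proper unramified covering of $\{|w|>R\}$, together with a locally finite family of unbounded Jordan domains $(U_i)$ on each of which $f$ is a universal covering of $\{|w|>R\}$. The $U_i$ fit the hypotheses of the local version of Theorem \ref{thm:positive}. Given $z\in I(f)$, the orbit must avoid every pole, so $|f^{\circ n}(z)|>R$ for every $n\ge N$ with $N$ large enough, and the local theorem supplies a curve $\gamma\subset I(f)$ from $f^{\circ N}(z)$ to $\infty$. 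I then lift $\gamma$ successively via the inverse branches of $f$ through the points $f^{\circ k}(z)$ for $k=N-1,\ldots,0$. Since $\gamma\subset\{|w|>R\}$ each lift is well defined; it ends at $\infty$ if it stays in some $U_i$, and at the associated pole if it enters one of the bounded components. After $N$ steps this yields the desired curve of escaping points from $z$ to $\infty$ or to a pre-pole.

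For (b), the logarithmic singularity over $\infty$ provides, for $R$ large, an unbounded Jordan domain $U\subset\C$ with $f:U\to\{|w|>R\}$ a universal covering; this single tract already suffices to invoke the local version of Theorem \ref{thm:positive}. The covering has countably many sheets, and orbits confined to $U$ are indexed by \emph{external addresses}, namely sequences recording in which sheet each iterate lies. A standard hyperbolic-contraction argument in the logarithmic coordinate on $U$, familiar from Eremenko-Lyubich-type analyses, shows that uncountably many such addresses are realised by genuine escaping orbits; the local theorem then places each such orbit on a curve of escaping points ending at $\infty$, and distinct addresses force topologically distinct curves.

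The main obstacle in (a) will be the bookkeeping of the successive lifts: I must check that each lift remains a continuous curve of escaping points, and that the only way a lift can fail to reach $\infty$ is by running into a bounded preimage component, thereby terminating at a pre-pole. For (b), the delicate point is not the production of uncountably many escaping orbits, which follows from the symbolic dynamics, but verifying that distinct symbolic addresses give rise to genuinely distinct dynamic rays; this rests on controlling the asymptotic geometry of each ray in the logarithmic coordinate associated to $U$.
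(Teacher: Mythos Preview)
Your proposal is correct and follows essentially the same route as the paper, which disposes of the corollary in one sentence by declaring it ``analogous'' to the proof of Theorem~\ref{thm:positive}: pass to logarithmic coordinates on the tracts, invoke the finite-order machinery of Sections~\ref{sec:head}--\ref{sec:growth} to produce a ray tail through some iterate, then pull back as in Proposition~\ref{prop:classification}, with the extra observation (which you supply and the paper leaves implicit) that for a meromorphic map a lift may terminate at a pre-pole rather than at~$\infty$. One small point to tighten: your justification ``since $\gamma\subset\{|w|>R\}$ each lift is well defined'' only covers the first pullback, because the curves $\gamma_{N-1},\gamma_{N-2},\dots$ need not remain in $\{|w|>R\}$; for those steps you should invoke the maximal-lift argument of Proposition~\ref{prop:classification} rather than the covering structure over $\{|w|>R\}$---but you have already flagged exactly this bookkeeping as the obstacle to be worked out.
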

\begin{remark}
 The second part of the corollary applies e.g.\ to the classical
  $\Gamma$-function, which
  has infinitely many poles (at the negative integers), but a logarithmic
  singularity to the right. 
\end{remark}

 We note finally that our results also apply to the setting of
  ``random iteration'' (see \cite{marksurvey}). For example,
  consider a sequence $\F=(f_0,f_1,f_2,\dots)$, where the $f_j$ are
  finite-order entire functions chosen from some given finite
  subset of $\B$. If we consider the 
  functions $\F_n := f_{n}\circ\dots\circ f_{0}$, and define
  $I(\F):=\{z\in\C:\F_n(z)\to\infty\}$, then every point of $I(\F)$ can be
  connected to infinity by a curve in $I(\F)$. 

 \subsection*{Previous results}
   In the early 1980s, Devaney  gave a complete
   description of the Julia set of any real exponential map that has an
   attracting fixed point; that is, 
   $z\mapsto\lambda\exp(z)$ with $\lambda\in(0,1/e)$
   (see \cite{devaneykrych}). This was the first
   entire function
   for which it was discovered that
   the escaping set (and in fact the Julia set) consists of curves
   to $\infty$. Devaney, Goldberg and Hubbard
   \cite{dgh} proved the existence of certain curves to $\infty$ in
   $I(f)$ for arbitrary exponential maps $z\mapsto \lambda\exp(z)$ and
   championed the idea that these should be thought of as analogs of
   dynamic rays for polynomials. 
   Devaney and Tangerman \cite{devaneytangerman} generalized this result
   to a large subclass of $\B$, namely those functions
   whose \emph{tracts} (see Section \ref{sec:classB}) are
   similar to those of the exponential map. 
   (This includes virtually all functions in the Eremenko-Lyubich class that 
    one can explicitly write down.)
   It seems that it was partly these developments that led
   Eremenko to pose the above-mentioned questions in his 1989 paper.

More recently, it was shown 
  in \cite{expescaping} that every escaping point of
  every exponential map can be connected to $\infty$ by a curve consisting of
  escaping points. This was the first time that 
  a complete classification of all escaping points, 
  and with it a positive answer to both of Eremenko's questions, 
  was given for a complete parameter space
  of transcendental functions. This result
  was carried over to the cosine family $z\mapsto a\exp(z)+b\exp(-z)$ in
  \cite{guenterdierk}.

 After our proof of Theorem \ref{thm:positive} was first announced, Bara\'nski
  \cite{baranskihyperbolic} obtained a proof of this result for 
  hyperbolic finite-order
  functions $f\in\B$ whose Fatou set consists of a single basin of
  attraction. 
  (In fact, Bara\'nski shows that for these functions every component of
   the Julia set is a curve to $\infty$; compare
   Theorem \ref{thm:baranskitype}.) 
  Together with more recent results \cite{boettcher} on the 
  escaping dynamics of functions in the Eremenko-Lyubich class, this provides
  an alternative proof of our theorem when $f$ is of finite order.

  A very interesting and surprising case in which the weak form of
   Eremenko's conjecture can be resolved 
   was discovered by Rippon and Stallard
   \cite{ripponstallardfatoueremenko}. They showed that the escaping
   set of a function 
   with a multiply-connected wandering domain 
   consists of a single, unbounded, connected component. 
   (Such functions never belong to the Eremenko-Lyubich class $\B$.) 
   In fact, they showed that, for any transcendental entire function, the 
   subset $A(f)\subset I(f)$ of ``points escaping at the fastest
   possible rate'', as
   introduced by Bergweiler and Hinkkanen 
   \cite{walteraimo}, has only unbounded components.  Also, recently   
   \cite{eremenkoproperty} 
   the weak form of Eremenko's conjecture was established for 
   functions $f\in\B$ whose postsingular set is bounded (which applies,
   in particular, to the hyperbolic counterexample constructed in Theorem
   \ref{thm:counterexample}).

There has been substantial interest in the set $I(f)$ not only from the point 
 of its topological structure, but also because of its interesting properties 
 from the point of view of Hausdorff dimension. 
 The reasoning is often parallel, and progress on the topology of 
 $I(f)$ has entailed progress on the Hausdorff dimension. 
 For many functions $f$ the set $I(f)$ is an 
 uncountable union of curves, each of which is homeomorphic to either $\R^+$ 
 (a dynamic ray) or $\R^+_0$ (a dynamic ray that lands at an escaping point). 
 Often, the Hausdorff dimension of all the rays is $1$, while the endpoints 
 alone have Hausdorff dimension $2$, or even infinite planar Lebesgue 
 measure. 
 This ``dimension paradox'' was discovered by Karpi\'nska for hyperbolic 
 exponential maps \cite{Karpinska}, for which the topology of Julia sets was 
 known. 
 In later extensions for 
 arbitrary exponential maps \cite{expescaping} and for the cosine family 
 \cite{guenterdierk}, the new parts were the topological classifications; 
 while analogous results on the Hausdorff dimension followed from the methods 
 of Karpi\'nska and McMullen; see also \cite{dierkcosine,dierkmonthly} for 
 extreme 
 results where every point in the complex plane is either on a dynamic ray 
 (whose union still has dimension one) or a landing point of those rays -- 
 so the 
 landing points of this one-dimensional set of rays is the entire complex 
 plane with only a one-dimensional set of exceptions. Recently, it was
 shown by Bara\'nski \cite{baranskihausdorff} that the dimension paradox
 also occurs for finite-order entire transcendental functions that are
 hyperbolic with a single basin of attraction. In fact,
 the Hausdorff dimension of $I(f)$ 
 is two for any entire function $f\in\B$ of finite order, which follows
 from Bara\'nski's result by \cite{boettcher} and was also shown
 directly and independently by Schubert. This generalizes
 McMullen's results \cite{hausdorffmcmullen}
 on the escaping sets of exponential and trigonometric functions. 
 Further studies of the Hausdorff dimension of the escaping set for
 $f\in\B$ can be found in \cite{bergweilerkarpinskastallard,escapingdim}.

\subsection*{Structure of the article}
In Section \ref{sec:classB}, we define logarithmic coordinates, in which we will perform most of our constructions. Some properties of functions in logarithmic coordinates are proved in Section \ref{sec:general}. In Section \ref{sec:head}, we show that the escaping set of a function in logarithmic coordinates consists of arcs if the escaping points can be ordered according to their ``speed'' of escape. We call this property the \emph{head-start condition}. Classes of functions that satisfy this condition, in particular logarithmic transforms of finite order functions, are discussed in Section \ref{sec:growth}.

In Section \ref{sec:counter}, we construct a function in logarithmic coordinates whose escaping set has only bounded path-components and in Section \ref{sec:realization}, we show how to translate this result into a bounded-type entire function. In an appendix, we recall several facts from hyperbolic geometry,
 geometric function theory and continuum theory.

\subsection*{Acknowledgements}
Most of all, we would like to thank Walter Bergweiler and Alex Eremenko for many
interesting discussions, and especially for introducing us to the method
 of using Cauchy integrals (Section \ref{sec:realization}) to construct
 entire functions, which we used in the proof of Theorem 
 \ref{thm:counterexample}. In particular, we would like to thank Professor 
 Eremenko for introducing us to the article \cite{eremenkogoldberg}. 
 We would also like to thank Adam Epstein, Helena Mihaljevic-Brandt, 
 Mikhail Lyubich, 
 Gwyneth Stallard, Phil Rippon, Norbert Steinmetz
 and Sebastian van Strien for many stimulating
 discussions on this work. We are indebted to the referee for many
 helpful suggestions. 

\subsection*{Notation and basic definitions}
  Throughout this article, we denote
   the Riemann sphere by $\Ch=\C\cup\{\infty\}$ and the right half plane by
   $\H:=\{z\in\C\,:\,\re z>0\}$. Also, we write
\[
	B_r(z_0) := \{z\in\C\,:\,|z-z_0|<r\} \quad\text{and}\quad
      	\H_R := \{z\in\C\,:\,\re z > R\}\;. 
\]
If $A\subset\C$, the closures of $A$ in $\C$ and $\Ch$ 
are denoted $\cl{A}$ and $\wh{A}$, respectively. 

Euclidean length and distance are denoted
$\ell$ and $\dist$, respectively.
If a domain $V\subset\C$ omits at least two points of the plane,
we similarly denote hyperbolic length and distance in $V$ by
$\ell_V$ and $\dist_V$. We shall often use the following well-known
 fact \cite[Corollary A.8]{jackdynamicsthird}: 
 if $V\subset\C$ is a simply connected domain, then the density
 $\lambda_V$ of the hyperbolic metric on $V$ satisfies
   \begin{equation} \label{eqn:standardestimate}
     \frac{1}{2\dist(z,\partial V)} \leq  
       \lambda_V(z) \leq \frac{2}{\dist(z,\partial V)}
   \end{equation}
  for all $z\in V$; we shall refer to this as the \emph{standard estimate}.

Let $f:\C\to\Ch$ be a meromorphic function. We call a point $a\in\Ch$ a 
 {\em singular value} of $f$ if for every open neighborhood $U$ of $a$, 
 there exists a component $V$ of $f^{-1}(U)$ such that 
 $f:V\to U$ is not bijective. 
 Denote the set of all finite singular values of $f$
 by $S(f)\subset\C$. 
 Clearly every critical value of $f$ belongs to $S(f)$.

Recall that $a\in\Ch$ is an \emph{asymptotic value} of $f$ if there
 exists a curve $\gamma:[0,\infty)\to\C$ with 
 $\lim_{t\to\infty}|\gamma(t)|=\infty$ such that $a=\lim_{t\to\infty} f(\gamma(t))$.
 (An example is given by $f(z)=\exp(z)$, $\gamma(t)=-t$, $a=0$.)
 Every asymptotic value of $f$ is a singular value; conversely,
 $S(f)$ is the closure of the set $\sing(f^{-1})$ of all
 finite critical and asymptotic values.

Let $f:\C\to\Ch$ be a transcendental entire or meromorphic function,
 and let $a\in\Ch$. Suppose there is some simply-connected open
 neighborhood $D\subset\C$ of $a$ and a component $U$ of 
 $f^{-1}(D\setminus\{a\})$ such that $f:U\to D\setminus\{a\}$ is a universal
 covering map. Then we say that $f$ has a \emph{logarithmic singularity}
 over $a$. In this case, $a$ is necessarily an asymptotic value of $f$;
 conversely $f$ has a logarithmic singularity over every isolated asymptotic
 value of $f$. For a further discussion of types of asymptotic values,
 see \cite{walteralexsingularities}. 

 As stated in the introduction, 
  we say that a transcendental entire function
  $f$ belongs to the {\em Eremenko-Lyubich class} $\B$ if 
  $S(f)$ is bounded.
 By $J(f)$ we denote the {\em Julia set} of $f$, 
  i.e.\ the set of points at which the sequence of functions 
  $\{f,f\circ f, \dots, f^{\circ n},\dots\}$ does not form a normal family in 
  the sense of Montel. The reader is referred to \cite{jackdynamicsthird} for a 
  general introduction
  to the dynamics in one complex variable, and to 
  \cite{waltermero,waltervorlesung,dierkentire} for background on transcendental
  dynamics.

We conclude any proof by the symbol $\proofsymbol$. 
 The proofs of separate claims within an argument are concluded by
 $\subproofsymbol$.

\section{The Eremenko-Lyubich Class \texorpdfstring{$\B$}{B} %
         and the Class \texorpdfstring{$\Blog$}{B\_log}} 
\label{sec:classB}

\subsection*{Tracts}
Let $f\in\B$, and let $D\subset\C$ be a bounded Jordan domain that
 contains $S(f)\cup\{0,f(0)\}$. 
 Setting $W := \C\setminus\cl{D}$, it is easy to see that
 every component $V$ of
\[
	 \V := f^{-1}(W) 
\]
is an unbounded Jordan domain, and that $f:V\to W$ is a universal
   covering. (In other words, $f$ has only logarithmic singularities
   over $\infty$.) The components of $\V$ are called the 
   \emph{tracts} of $f$. Observe that a given
   compact set $K\subset\C$ will intersect 
   at most finitely many tracts of $f$.

 \subsection*{Logarithmic coordinates}
  To study logarithmic singularities, it is natural to apply a
   logarithmic change of coordinates (compare \cite[Section 2]{alexmisha}).
   More precisely, let 
   $\T := \exp^{-1}(\V)$ and $H := \exp^{-1}(W)$. 
   Then there is a continuous
    function $F:\T\to H$ (called a 
    \emph{logarithmic transform} of $f$) such that the following
    diagram commutes:
  \begin{diagram}
 	\T& &\rTo^{F} & & H \\
       \dTo^{\exp}& & & &\dTo_{\exp} \\
    \V  &  &\rTo_f & &W .	
  \end{diagram}
The components of $\T$ are called the \emph{tracts} of $F$. 
 Note that $F$ is unique up to postcomposition by a map of the
 form $z\mapsto z + k(z)$, where $k:\T\to 2\pi i \Z$ is continuous
 (and hence constant on every tract of $F$). 

 By construction, the function $F$ and its domain
  $\T$ have the following properties, see also Figure \ref{Fig:Tracts}:
  \begin{enumerate}
   \item \label{item:halfplane} $H$ is a $2\pi i$-periodic Jordan domain
     that contains a right half plane;
   \item \label{prop:a}
	every component $T$ of $\T$ is an unbounded Jordan domain with real 
        parts bounded below, but unbounded from above;
   \item  The components of $\cl{\T}$ have disjoint
     closures and accumulate only at infinity; that is,
     if $z_n\in\T$ is a sequence of points all belonging to different tracts,
     then $z_n\to\infty$;
     \label{item:disjointunion}
   \item for every component $T$ of $\T$, $F:T\to H$ 
     is a conformal isomorphism. In particular, $F$ extends continuously to 
     the closure
     $\cl{\T}$ of $\T$ in $\C$; \label{item:extensiontoclosure}
   \item for every component $T$ of $\T$, $\exp|_{T}$ is injective;
     \label{item:expinjective}
   \item $\T$ is invariant under translation by $2\pi i$. \label{item:twopiinvariant}
  \end{enumerate}

\begin{figure}[htb]
\begin{center}
\setlength{\unitlength}{1cm}
\begin{picture}(10,5.3)
\put(1.1,4.3){$H$}
\put(0.7,0.2){$0$}
\put(0.35,2.2){$2\pi i$}
\includegraphics[viewport=000 000 555 291,clip,width=10cm]{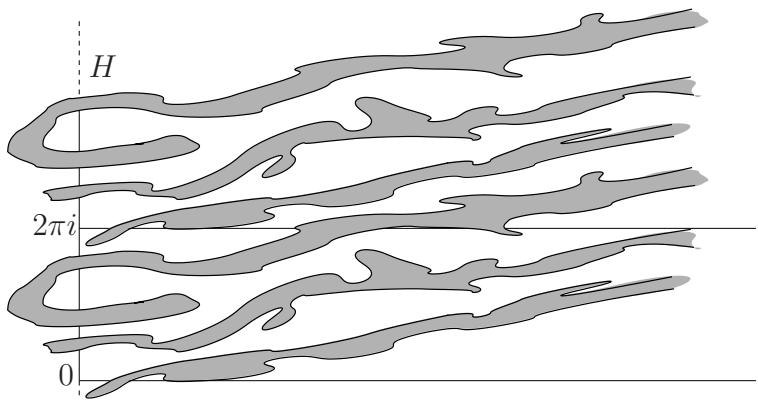}
\end{picture}
\caption{\label{Fig:Tracts} The domain of definition for a function $F\in\Blog$ is $2\pi i$-periodic. Note that a tract $T$ need not be contained in $H$.} 
\end{center}
\end{figure}

  We will denote by $\Blog$ the class of all 
   $F:\T\to H$ such that $H$, $\T$ and $F$ satisfy 
  (\ref{item:halfplane}) to (\ref{item:twopiinvariant})  
   regardless of whether they arise from an entire function
   $f\in\B$ or not. In particular, if $f:\C\to\Ch$ is any meromorphic
   function that has one or several
   logarithmic singularities over $\infty$, then 
   we can associate to $f$ a function $F\in\Blog$ that encodes
   the behavior of $f$ near its logarithmic singularities.

 If $F\in \Blog$ and $T$ is a tract of $F$, we denote the
  inverse of the conformal isomorphism $F:T\to H$ by $\FInv{T}$. 

 \subsection*{Normalized and disjoint-type functions}
   Let $F:\T\to H$ be a function of class $\Blog$.
   A simple application of Koebe's $1/4$-theorem shows that
   there is $R_0>0$ such that
     \begin{equation}
      |F'(z)| \geq 2   \label{eqn:expansion}
     \end{equation}
   when $\re F(z)\geq R_0$; see
    \cite[Lemma 2.1]{alexmisha}.
  In the following, we refer to the property (\ref{eqn:expansion})
   as {\em expansivity} of $F$. 

  We shall say that $F$ is \emph{normalized} if
   $H$ is the right half plane $\H$ and furthermore
   (\ref{eqn:expansion}) holds for all $z\in\T$. We denote the
   set of all such functions by $\Blognorm$.

  Note that we can pass from any function $F\in\Blog$ to a normalized one
   by restricting $F$ to $\T' := F^{-1}(\H_{R_0})$ (where $R_0$
   is as above) and applying the change of variable $w=z-R_0$.
   For this reason, it is usually no loss of generality to assume that
   $F\in\Blognorm$. 
 
  Let us also say that $F$ is of \emph{disjoint type} if
   $\cl{\T}\subset H$. It is easy to see that
   a function $f\in\B$ has a disjoint-type logarithmic
   transform if and only if $S(f)$ is contained in the immediate basin
   of an attracting fixed point of $f$. (This is the setting considered
   by Bara\'nski \cite{baranskihyperbolic}.) Note that we might not
   be able to normalize such a function in the above-mentioned 
   manner without losing the disjoint-type property. However,
   if $F$ is of disjoint type, then $F$ will be uniformly expanding
   with respect to the
   hyperbolic metric on $H$. 
 \begin{lem}[Uniform Expansion for Disjoint-Type Maps]
   \label{lem:hyperbolicexpansion}
  Suppose $F:\T\to H$ is of disjoint type. 

  Then there exists a constant
   $\Lambda>1$ such that the derivative of $F$ with respect to the
   hyperbolic metric on $H$ satisfies
     $\|DF(z)\|_H = \lambda_{\T}(z) / \lambda_H(z) \geq \Lambda$
   for all $z\in \T$. 

   In particular, 
     $\dist_H(F(z),F(w)) \geq \Lambda \dist_H(z,w)$
  whenever $z$ and $w$ belong to the
   same tract of $F$.
 \end{lem}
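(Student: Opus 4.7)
The first equality $\|DF(z)\|_H = \lambda_\T(z)/\lambda_H(z)$ is a direct consequence of conformality. Since $F \colon T \to H$ is a conformal isomorphism, it is a hyperbolic isometry from $(T,\lambda_T)$ to $(H,\lambda_H)$, so $|F'(z)|\,\lambda_H(F(z)) = \lambda_T(z)$ for $z\in T$. Dividing by $\lambda_H(z)$ gives $\|DF(z)\|_H = |F'(z)|\,\lambda_H(F(z))/\lambda_H(z) = \lambda_T(z)/\lambda_H(z)$, and since $\T$ is a disjoint union of tracts (so hyperbolic density is computed componentwise), $\lambda_\T(z) = \lambda_T(z)$ whenever $z\in T$.

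The substantive point is the uniform lower bound $\Lambda > 1$. Schwarz--Pick applied to the strict inclusion $T\subsetneq H$ gives the pointwise inequality $\lambda_T(z)/\lambda_H(z) > 1$; the task is to make this uniform. I would split by $\re z$. In the large-$\re z$ regime, property (v) (injectivity of $\exp|_T$) forces the imaginary width of $T$ to be at most $2\pi$, so $\dist(z,\partial T)\le \pi$, and the standard estimate \eqref{eqn:standardestimate} gives $\lambda_T(z)\ge 1/(4\pi)$. On the other hand, property (i) says $H\supset\H_{R_1}$ for some $R_1$, so by domain monotonicity of the hyperbolic metric, $\lambda_H(z)\lesssim 1/(\re z - R_1)$ on $\H_{R_1}$. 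Hence $\lambda_T(z)/\lambda_H(z)\to\infty$ as $\re z\to\infty$, and there exists $R_2$ with the ratio $\ge 2$ whenever $\re z\ge R_2$. In the bounded-$\re z$ regime I would exploit the $2\pi i$-periodicity of both $\T$ and $H$ and reduce to the fundamental strip $S := \{0\le \im z < 2\pi\}$. The plan is to show that $K := \cl{\T}\cap S\cap\{\re z\le R_2\}$ is a compact subset of $H$: closedness in $H$ follows from the disjoint-type assumption $\cl\T\subset H$, and property (iii) (tracts accumulate only at $\infty$) together with $\cl\T\subset H$ rule out accumulation points of $K$ in $H$. On the compact set $K$, the continuous function $\lambda_\T/\lambda_H$ attains a minimum $\Lambda_0 > 1$ by pointwise strict Schwarz--Pick, and setting $\Lambda := \min(\Lambda_0, 2)$ gives the uniform bound after using periodicity.

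The distance inequality then follows immediately: since $F$ is a hyperbolic isometry from $T$ to $H$, $\dist_H(F(z),F(w)) = \dist_T(z,w)$; and the density bound $\lambda_T\ge \Lambda\lambda_H$ on $T$ gives $\dist_T(z,w)\ge \Lambda\dist_H(z,w)$, because any path $\gamma\subset T$ from $z$ to $w$ is also a path in $H$ whose $T$-length dominates $\Lambda$ times its $H$-length. The main obstacle I anticipate is rigorously establishing compactness of $K$: property (b) bounds the real parts on each individual tract from below, but the lower bounds may vary across tracts, and one must rule out the possibility of infinitely many tracts in $S$ with lower bounds tending to $-\infty$. If this occurs, a separate argument (again using $\cl\T\subset H$ together with thinness of tracts versus the width of $H$ in the relevant region) would be needed to handle the corresponding far-left tails.
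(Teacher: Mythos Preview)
Your argument is correct and follows essentially the same route as the paper's: Schwarz--Pick for the pointwise strict inequality, the standard estimate on $\lambda_{\T}$ versus the decay of $\lambda_H$ for large real parts, and $2\pi i$-periodicity together with a compactness argument for the remaining region. The obstacle you anticipate does not actually arise: since $H$ is a $2\pi i$-periodic Jordan domain containing a right half-plane, its boundary has bounded real parts in a fundamental strip and hence globally, so real parts in $\cl{\T}\subset H$ are uniformly bounded below and your set $K$ is compact.
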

 \begin{proof}
   Equality in the first claim is satisfied because $F:T\to H$ is
    a conformal isomorphism for every component $T$ of $\T$. By Pick's theorem
    \cite[Theorem 2.11]{jackdynamicsthird}, we have $\|DF(z)\|_H>1$ for all
    $z\in V$, and we have $\lambda_{\T}(z)/\lambda_H(z)\to\infty$ as
    $z$ tends to the boundary of $V$ (in $\C$). Since $\T$ is $2\pi i$-periodic,
    it remains to
    show that $\liminf_{\re z\to+\infty} \lambda_{\T}(z)/\lambda_H(z)>1$. By the
    standard estimate (\ref{eqn:standardestimate}), we have
    $\lambda_{\T}(z)\geq 2\pi$ for all $z\in\T$, while $\lambda_H(z)\to\infty$
    as $\re z\to +\infty$. This proves the first claim.

  The second claim follows from the first: if $\gamma$ is the hyperbolic 
   geodesic of $H$ that connects $F(z)$ and $F(w)$, then 
   $F_T^{-1}(\gamma)$ has length at most $\Lambda \dist_H(z,w)$.    
 \end{proof}

 \subsection*{Combinatorics in \texorpdfstring{$\Blog$}{B\_log}}

  Let $F\in \Blog$; we denote the Julia set and the
   set of escaping points of $F$ by
    \begin{align*}
      J(F) &:= \{z\in\cl{\T}: F^{\circ n}(z) \text{ is defined and in $\cl{\T}$
                   for all $n\geq 0$}\}
                \quad\text{and} \\
      I(F) &:= \{z\in J(F): \re F^{\circ n}(z)\to \infty\}\;.
    \end{align*}
  If $f\in\B$ and $F$ is a logarithmic transform of $f$, then clearly
   $\exp(I(F))\subset I(f)$. Furthermore, if $F$ is normalized or
   of disjoint type, then
   $\exp(J(F))\subset J(f)$, respectively $\exp(J(F))= J(f)$. 
   (See \cite[Lemma 2.3]{boettcher}.) 
   For $K>0$ we also define more generally
   \[ J^K(F) := \{z\in J(F):
                   \re F^{\circ n}(z)\geq K \text{ for all $n\geq 1$}\}\;. 
      \]
The partition of the domain of $F$ into tracts suggests a 
   natural way to assign symbolic dynamics to points in $J(F)$.
   More precisely, let $z\in J(F)$
    and, for $j\geq 0$, let
    $T_j$ be the tract of $F$ with $F^{\circ j}(z)\in \cl{T}_j$. Then the sequence
    \[ \s := T_0 T_1 T_2 \dots \]
    is called the \emph{external address} of $z$. More generally,
    we refer to any sequence of tracts of $F$ as an external address
    (of $F$). If $\s$ is such an external address, we define the closed set
    \[ J_{\s} := \{z\in J(F): z\text{ has address $\s$}\}\;; \]
    we define $I_{\s}$ and $J^K_{\s}$ in a similar fashion (note that 
     $J_{\s}$, and hence $I_{\s}$ and $J^K_{\s}$,
      may well be empty for some addresses). 

  We denote the one-sided {\em shift operator} on external 
   addresses by $\sigma$. In other words, $\sigma(T_0 T_1 T_2\dots)=T_1 T_2\dots$.
 
\begin{defn}[Dynamic Rays, Ray Tails]
\label{Def:RayTails}
Let $F\in\Blog$. A {\em ray tail} of $F$ is an injective curve
\[
	\gamma: [0,\infty)\to I(F)
\]
  such that $\lim_{t\to\infty}\re F^{\circ n}(\gamma(t))=+\infty$ for all
  $n\geq 0$ and such that
  $\re F^{\circ n}(\gamma(t))\to+\infty$ uniformly in $t$ as $n\to\infty$. 

Likewise, we can define ray tails for an entire function $f$. A
 \emph{dynamic ray} of $f$ is then a maximal injective curve 
 $\gamma: (0,\infty)\to I(f)$ such that 
 $\gamma|_{[t,\infty)}$ is a ray tail for every $t>0$.
\end{defn}
\begin{remark}
 If $z$ is on a ray tail, then $z$ is either on 
  a dynamic ray or the landing point of such a ray. 
  In particular, it is possible under our terminology for 
  a ray tail to
  properly contain a dynamic ray.
\end{remark}

In Sections \ref{sec:head} and \ref{sec:growth}, we will construct ray tails for
 certain functions in class $\Blog$, and in particular for logarithmic
 transforms of the functions treated in Theorem
 \ref{thm:positive} and Corollary \ref{cor:mero}. By the following fact,
 this will be sufficient to complete our objective.

 \begin{prop}[Escaping Points on Rays] \label{prop:classification}
  Let $f:\C\to\C$ be an entire function and let
   $z\in I(f)$. Suppose that
  some iterate $f^{\circ k}(z)$ is on a ray tail $\gamma_k$ of $f$. 
  Then either $z$ is on a ray tail, or 
  there is some $n\leq k$ such that $f^{\circ n}(z)$ belongs to
       a ray tail that contains an asymptotic value of $f$.

  In particular, there is a curve $\gamma_0$ connecting $z$ to $\infty$
   such that $f^{\circ j}|_{\gamma_0}$ tends to $\infty$ uniformly
   (in fact, $f^{\circ k}(\gamma_0)\subset \gamma_k$).
 \end{prop}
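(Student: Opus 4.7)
The plan is to construct $\gamma_0$ by pulling the given ray tail $\gamma_k$ back under $f$ iteratively, producing curves $\gamma_j$ through $f^{\circ j}(z)$ with $f\circ\gamma_j=\gamma_{j+1}$ for $j=k-1,\dots,0$. After restricting $\gamma_k$ to its sub-tail beyond $f^{\circ k}(z)$, I may assume $\gamma_k(0)=f^{\circ k}(z)$. The dichotomy in the conclusion will correspond to whether every pullback stage extends to all of $[0,\infty)$ (in which case $\gamma_0$ itself is a ray tail through $z$) or whether at some first stage the pullback escapes to $\infty$ in finite parameter time (exposing an asymptotic value of $f$ on the ray tail just above it).

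The core step is a single lifting lemma: given $\gamma_{j+1}$, let $\tilde\gamma:[0,T)\to\C$ be the maximal continuous lift of $\gamma_{j+1}$ through $f$ with $\tilde\gamma(0)=f^{\circ j}(z)$. Local lifts always exist --- at regular points uniquely, and at critical points after a consistent branch choice --- so either $T=\infty$, or $\tilde\gamma$ leaves every compact set. To make the latter precise I would study the set $A\subset\Ch$ of accumulation points of $\tilde\gamma(t)$ as $t\to T^-$. Written as the decreasing intersection of the compact connected sets $\overline{\tilde\gamma([s,T))}\subset\Ch$, the set $A$ is itself a continuum, and by continuity of $f$ on $\C$ every finite point of $A$ lies in the discrete set $f^{-1}(\gamma_{j+1}(T))$. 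A connected subset of $\Ch$ whose finite part is discrete is either a single finite point $w$ or equal to $\{\infty\}$. In the first case, local properness of $f$ near $w$ would let me extend the lift past $T$, contradicting maximality; so $A=\{\infty\}$, i.e.\ $\tilde\gamma(t)\to\infty$ as $t\to T^-$, and a reparametrisation of $[0,T)$ then exhibits $\gamma_{j+1}(T)\in\gamma_{j+1}$ as an asymptotic value of $f$.

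Iterating the lemma gives the two cases of the conclusion. If every $T$ equals $\infty$, I verify that $\gamma_0$ is a ray tail: injectivity follows from $f\circ\gamma_0=\gamma_1$ being injective; uniform-in-$t$ escape of $|f^{\circ n}(\gamma_0(t))|$ as $n\to\infty$ is inherited from $\gamma_k$ via $f^{\circ n}\circ\gamma_0=f^{\circ(n-k)}\circ\gamma_k$ for $n\geq k$; and $|\gamma_0(t)|\to\infty$ follows from $|f(\gamma_0(t))|=|\gamma_1(t)|\to\infty$ together with boundedness of $f$ on compact sets. Otherwise, let $n:=j+1$ be the first index of failure, so $f^{\circ n}(z)\in\gamma_n$ and $\gamma_n$ contains the asymptotic value $\gamma_n(T)$. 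For the ``in particular'' curve from $z$ to $\infty$ I continue pulling the partial $\gamma_j=\tilde\gamma$ back further, noting that the lift of a curve already tending to $\infty$ at its endpoint again tends to $\infty$ at its (possibly earlier) endpoint, either by the same bounded-on-compacts argument or via a further asymptotic value; the resulting $\gamma_0$ is then a curve from $z$ to $\infty$ with $f^{\circ k}(\gamma_0)\subset\gamma_k$ and uniform escape for large $n$ inherited from $\gamma_k$. The main obstacle is the continuum-theoretic step pinning down $A$ --- ruling out oscillation of the lift among several finite preimages of $\gamma_{j+1}(T)$ --- which relies on connectedness of decreasing intersections of continua in $\Ch$ combined with discreteness of $f^{-1}(\gamma_{j+1}(T))$.
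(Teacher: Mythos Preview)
Your proof is correct and follows essentially the same approach as the paper: iteratively pull back $\gamma_k$ via maximal lifts, and argue that if $T<\infty$ then the lift tends to $\infty$, exposing an asymptotic value. The one noteworthy difference is that the paper simply asserts that $w=\lim_{t\to T}\gamma_{k-1}(t)$ exists in $\Ch$, whereas you supply the continuum-theoretic justification (the accumulation set is a connected subset of $\Ch$ meeting $\C$ only in the discrete fibre $f^{-1}(\gamma_{j+1}(T))$); your version is therefore slightly more complete on this point, but the argument is otherwise the same.
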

 \begin{proof}
   Let the ray tail be parametrized as
    $\gamma_k:[0,\infty)\to\C$. Let 
    $\gamma_{k-1}:[0,T)\to\C$ be a maximal lift of $\gamma_k$ starting
    at $f^{\circ(k-1)}(z)$. That is, 
    $\gamma_{k-1}(0)=f^{\circ (k-1)}(z)$, 
    $f(\gamma_{k-1}(t))=\gamma_k(t)$ and there is no extension of
    $\gamma_{k-1}$ to a larger interval that has these properties.
    (Such a maximal lift exists e.g.\ by Zorn's lemma.)

 If $T=\infty$, then clearly
    $\gamma_{k-1}(t)\to\infty$ as $t\to\infty$. 
    Otherwise, $w=\lim_{t\to T}\gamma_{k-1}(t)$ exists
    in $\Ch$. If $w\neq\infty$, we could extend $\gamma_{k-1}$
    further (choosing any one of the possible branches
    of $f^{-1}$ in the case where $w$ is a critical point), contradicting 
    maximality of $T$. Thus $w=\infty$ and, in particular, $\gamma_k(T)$ is
    an asymptotic value of $f$.

    In either case, we have found a curve $\gamma_{k-1}\subset
     f^{-1}(\gamma_k)\subset I(f)$ connecting $f^{\circ (k-1)}(z)$ to
     infinity. This curve is a ray tail, except possibly if
     $\gamma_k$ contained an asymptotic value of $f$.
     Continuing this method inductively, we are done. 
\end{proof}

\section{General Properties of Class \texorpdfstring{$\Blog$}{B\_log}}
\label{sec:general}

  In this section, we prove some general results for functions in 
   class $\Blog$. The first of these strengthens the aforementioned
   expansion estimate of
   \cite[Lemma 2.1]{alexmisha} by showing that such a function
   expands distances like an exponential map. 
   
\begin{lem}[Exponential Separation of Orbits] 
\label{lem:expansion}
Let $F\in\Blognorm$ and let $T$ be a tract of $F$. If $\omega,\zeta\in T$ are such that $|\omega-\zeta|\geq 2$, then 
     \[ 
	|F(\omega) - F(\zeta)| \geq 
         \exp(|\omega - \zeta|/8\pi)
         \cdot\min\{\re F(\omega),
             \re F(\zeta)\}\;. 
\]
\end{lem}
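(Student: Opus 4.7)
The strategy is to translate the Euclidean inequality into a hyperbolic-geometry estimate by exploiting the fact that $F:T\to H = \H$ is a conformal isomorphism, and hence an isometry for the respective hyperbolic metrics. The first step is to observe that property (e) of $\Blog$, combined with $T$ being a (simply-connected) tract, forces $T$ to lie in some open horizontal strip $\Sigma = \{c < \im z < c+2\pi\}$ of width $2\pi$: the projection $\pi : \C \to \C/2\pi i\Z$ restricts to a homeomorphism of $T$ onto a simply-connected proper open subset of the cylinder, which cannot carry a noncontractible loop and therefore lifts into a single fundamental strip of $\pi$.

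Next, I would use the explicit density of the hyperbolic metric on $\Sigma$,
\[
\lambda_\Sigma(z) = \frac{1}{2\sin((\im z - c)/2)} \geq \frac{1}{2},
\]
together with monotonicity of the hyperbolic metric under inclusion ($T\subset\Sigma$), to conclude $\lambda_T(z)\geq 1/2$ for every $z\in T$. Integrating along any path in $T$ from $\omega$ to $\zeta$ gives $\dist_T(\omega,\zeta)\geq |\omega-\zeta|/2$, and since $F:T\to\H$ is a conformal isomorphism,
\[
\dist_\H(F(\omega),F(\zeta)) = \dist_T(\omega,\zeta) \geq \frac{|\omega-\zeta|}{2}.
\]

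The third step is to convert this hyperbolic bound back into a Euclidean one using the explicit formula for hyperbolic distance in the right half plane, $|a - b| = 2\sqrt{\re a\cdot\re b}\,\sinh(\dist_\H(a,b)/2)$. Combined with $\sqrt{\re a\cdot\re b} \geq \min\{\re a,\re b\}$ and the bound on $\dist_\H(F\omega,F\zeta)$, this yields
\[
|F(\omega) - F(\zeta)| \;\geq\; 2\min\{\re F(\omega),\re F(\zeta)\}\cdot\sinh(|\omega-\zeta|/4).
\]
It then remains to verify the purely real-variable inequality $2\sinh(y/4)\geq \exp(y/(8\pi))$ for all $y\geq 2$.

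\textbf{Main obstacle.} The geometric ingredients are all standard, so the one delicate point will be the elementary $\sinh$ estimate at the threshold $y = 2$, where $2\sinh(1/2)$ and $\exp(1/(4\pi))$ are numerically extremely close; the comfortable exponential margin ($1/4$ versus $1/(8\pi)$) only asserts itself once $y$ is moderately larger than $2$. Should the naive chain of inequalities fall fractionally short at the boundary, I expect to recover the needed slack either by sharpening the strip estimate (noting $\lambda_T(z) > 1/2$ strictly away from the mid-line of $\Sigma$, or that $\sqrt{\re F\omega \cdot \re F\zeta}$ strictly exceeds $\min$ whenever the geodesic in $\H$ has a nontrivial horizontal component) or by invoking the expansivity $|F'(z)|\geq 2$ coming from $F\in\Blognorm$, which is otherwise unused and provides reserve in the short-distance regime.
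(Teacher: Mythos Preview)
Your approach is essentially the same as the paper's: both arguments pass through the hyperbolic isometry $F:T\to\H$, bound $\dist_T(\omega,\zeta)$ from below using that $T$ sits in a strip of height $2\pi$, and then convert the resulting bound on $\dist_\H(F\omega,F\zeta)$ back into a Euclidean inequality. Your estimates are in fact sharper at each step: you use the exact strip density $\lambda_\Sigma\ge 1/2$ where the paper invokes only the cruder standard estimate $\lambda_T\ge 1/(2\pi)$, and you use the closed formula $|a-b|=2\sqrt{\re a\,\re b}\,\sinh(d/2)$ where the paper constructs an explicit comparison point $\xi$ and an auxiliary three-segment curve to bound $d$ from above.

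Your ``main obstacle'' is real and you should not undersell it: at $y=2$ one has $2\sinh(1/2)\approx 1.042$ while $\exp(1/(4\pi))\approx 1.083$, so the proposed elementary inequality actually \emph{fails}, not merely ``comes close''. Of your recovery strategies, the strip-sharpening and the $\sqrt{\re a\,\re b}>\min$ observations do not give uniform quantitative gain (a tract can be arbitrarily close to a full strip, and one can have $\re F\omega=\re F\zeta$), and expansivity $|F'|\ge 2$ only yields $|F\omega-F\zeta|\ge 2|\omega-\zeta|$, which is useless when $\min\{\re F\omega,\re F\zeta\}$ is large. The paper's own endgame has the same defect: it invokes $e^{x-1/2}-1>e^{x/2}$ ``for $x\ge 2$'', but with $x=|\omega-\zeta|/(4\pi)$ one only has $x\ge 1/(2\pi)$. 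In practice this is harmless: the lemma is applied only qualitatively (Lemma~\ref{lem:linearseparation} uses it with $|\omega-\zeta|\ge\delta$ for $\delta$ chosen large), so proving the inequality for, say, $|\omega-\zeta|\ge 3$---which your chain does deliver---would suffice for every application in the paper. If you want the stated constant $8\pi$ at the stated threshold $2$, an additional idea is needed; none of the suggested patches closes the gap as written.
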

\begin{proof} Suppose without loss of generality that 
 $\re F(\omega) \geq \re F(\zeta)$. 
 Since $T$ has height at most $2\pi$, it follows by 
 the standard estimate (\ref{eqn:standardestimate})
 on hyperbolic distances that
\[
	|\omega-\zeta|/2\pi \leq \dist_T(\omega,\zeta)= \dist_{\H}(F(\omega), F(\zeta))\;.
\]
Let $\xi\in\H$ be a  point that satisfies $\dist_\H(F(\zeta),\xi)=\dist_{\H}(F(\omega), F(\zeta))$ and $\re F(\zeta)=\re \xi$, see Figure \ref{Fig:Hyperbolic}. We will estimate the Euclidean distance $s=|F(\zeta)-\xi|$. Then, $|F(\omega)-F(\zeta)|\geq s$. Let $\gamma$ be the curve consisting of three
 straight line segments pictured in Figure
  \ref{Fig:Hyperbolic}, connecting $F(\zeta)$ to $\xi$ 
  through $F(\zeta)+s$ and $\xi + s$. 
\begin{figure}[hbt]
\begin{center}
\setlength{\unitlength}{1cm}
\begin{picture}(6,5)
\put(1.5,0.5){$\xi$}
\put(2.86,0.5){$\xi+s$}
\put(1.44,1.6){$s$}
\put(1.4,2.7){$F(\zeta)$}
\put(2.8,2.7){$F(\zeta)+s$}
\put(3.4,1.6){$\gamma$}
\put(5.7,1){$S$}
\put(4.2,4){$F(\omega)$}
\put(0.8,4.5){$\H$}
\includegraphics[viewport=000 000 313 310,clip,width=6cm]{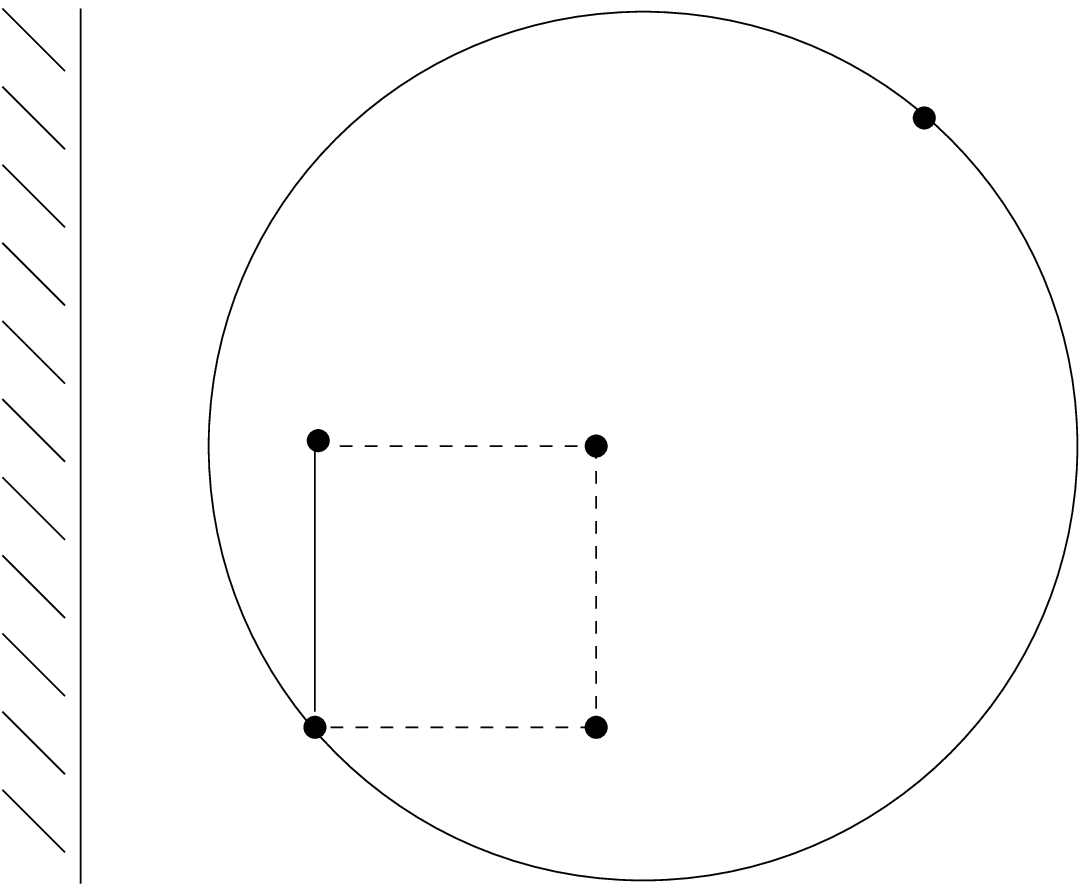}
\end{picture}
\end{center}
\caption{\label{Fig:Hyperbolic} The set $S$ of points of equal hyperbolic distance in $\H$ to $F(\zeta)$ is a Euclidean circle. Clearly, $\xi$ is the Euclidean closest point to $F(\zeta)$ on $S$ that satisfies $\re \xi \geq \re F(\zeta)$. We use the dotted line $\gamma$ to estimate $s$.} 
\end{figure}

It is easy to see that the hyperbolic length of the vertical part of $\gamma\subset\H$ is less than $1$. On the other hand, each of the horizontal
  parts of $\gamma$ has hyperbolic length precisely
 $\log\left( (\re F(\zeta)+s)/\re F(\zeta)\right)$. Hence, we get 
\[
	\frac{|\omega-\zeta|}{2\pi} < 2\log\left(\frac{\re F(\zeta) + s}{\re F(\zeta)}\right) + 1\;,
\]
and therefore
\[
	|F(\omega)-F(\zeta)|\geq s \geq \left(\exp\left(\frac{|\omega-\zeta|}{4\pi}-\frac{1}{2}\right)-1\right)\cdot\re F(\zeta)\;.
\]
Since  $e^{x-1/2}-1>e^{x/2}$ for $x\geq 2$, the claim follows.
\end{proof}
\begin{remark}
It follows from expansivity of $F$ that for any two distinct points $w,z$ with the same external address, there exists $k\in\N$ such that $|F^{\circ k}(w)-F^{\circ k}(z)|>2$. Hence Lemma \ref{lem:expansion} will apply eventually. 
\end{remark}

\begin{lem}[Growth of Real Parts]
\label{lem:realseparation}
Let $F\in\Blognorm$.
If $\zeta,\omega\in J(F)$ are distinct points with the same external address $\s$, then
\[ 
	\lim_{k\to\infty} \max(\re F^{\circ k}(\zeta),\re F^{\circ k}(\omega)) = \infty\;. 
\]
\end{lem}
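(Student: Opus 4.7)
My plan is to produce two incompatible bounds on $|F^{\circ k}(\zeta) - F^{\circ k}(\omega)|$ under the contradiction hypothesis. Throughout, write $z_k := F^{\circ k}(\zeta)$ and $w_k := F^{\circ k}(\omega)$; since $\zeta$ and $\omega$ share the external address $\s$, both $z_k$ and $w_k$ lie in $\cl{T}_k$ for the same tract $T_k$, for every $k \geq 0$.

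First I would show that $|z_k - w_k| \to \infty$. Because $F \in \Blognorm$, the estimate $|F'(z)| \geq 2$ holds on $\T$, and so the conformal inverse $F_{T_k}^{-1}\colon H \to T_k$ has $|(F_{T_k}^{-1})'(w)| \leq 1/2$ on $H$. The half-plane $H = \H$ is convex, so integrating this bound along the straight segment from $z_{k+1}$ to $w_{k+1}$ yields
\[
|z_k - w_k| \;\leq\; \tfrac{1}{2}\, |z_{k+1} - w_{k+1}|,
\]
with an approximation from the interior of $T_k$ when an endpoint happens to lie on $\partial H$ (legitimate because $F$ extends continuously to $\cl{T}_k$). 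Iterating $k$ times gives $|z_k - w_k| \geq 2^k |z_0 - w_0|$, which tends to infinity since $\zeta \neq \omega$.

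Next I would argue by contradiction. Suppose $\max(\re z_k, \re w_k)$ does \emph{not} tend to infinity: then there exist $M > 0$ and a subsequence $k_j \to \infty$ with $\max(\re z_{k_j}, \re w_{k_j}) \leq M$. For $k \geq 1$, both $z_k$ and $w_k$ lie in $\cl{H}$ (since $F$ sends $\cl{\T}$ into $\cl{H}$), so their real parts are nonnegative and $|\re z_{k_j} - \re w_{k_j}| \leq M$. Moreover, $z_{k_j}$ and $w_{k_j}$ lie in $\cl{T}_{k_j}$ and $\exp|_{T_{k_j}}$ is injective, so $|\im z_{k_j} - \im w_{k_j}| \leq 2\pi$. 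Combining, $|z_{k_j} - w_{k_j}| \leq M + 2\pi$ along the whole subsequence, contradicting the divergence established in the first step.

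The only real technical nuisance is justifying the Koebe contraction estimate when $z_{k+1}$ or $w_{k+1}$ happens to sit on $\partial H$; this is a routine passage to the limit from interior points, using the continuous extension of $F$ to $\cl{T}_k$. Beyond that, the proof is just a confrontation between two very elementary facts: geometric separation of orbits in a single tract, and the uniform $2\pi$-bound on tract heights, which forces bounded real parts to give bounded Euclidean distances.
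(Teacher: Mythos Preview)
Your argument follows the same two--step skeleton as the paper's proof: (i) expansivity of $F$ forces $|F^{\circ k}(\zeta)-F^{\circ k}(\omega)|\to\infty$, and (ii) along any subsequence where both real parts stay below a fixed bound $M$, the distance $|F^{\circ k}(\zeta)-F^{\circ k}(\omega)|$ must remain bounded. These two facts together yield the contradiction, exactly as in the paper.

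There is, however, one genuine slip in your step (ii). From ``$\exp|_{T_{k_j}}$ is injective'' you infer $|\im z_{k_j}-\im w_{k_j}|\le 2\pi$. This does not follow: injectivity of $\exp$ on $T$ only says that no two points of $T$ differ by a nonzero element of $2\pi i\Z$. A tract in $\Blog$ can perfectly well be a thin tilted tube whose imaginary parts span far more than $2\pi$; for instance a tract congruent to $\{x+iy:x>0,\ |y-x|<\eps\}$ for small $\eps$ satisfies all of the axioms (\ref{item:halfplane})--(\ref{item:twopiinvariant}), yet has unbounded height. The paper's own proof of this lemma obtains the needed bound differently: for each tract $T$ the set $\cl{T}\cap\{\re z\le M\}$ is compact and hence has bounded imaginary parts, and since up to $2\pi i\Z$--translation only finitely many tracts meet $\{\re z\le M\}$ (property~(\ref{item:disjointunion})), this bound is uniform in $k$. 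Replacing your ``$\le 2\pi$'' claim by this compactness--plus--periodicity argument repairs the gap, and the remainder of your proof goes through unchanged.
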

\begin{proof} 
 Suppose that 
  $\zeta,\omega\in J(F)$ satisfy
  $\re F^{\circ k}(\zeta), \re F^{\circ k}(\omega) <S$ for some $S>0$ and
  infinitely 
  many $k\in\N$. For any tract $T$, the set 
  $\cl{T}\cap\{z\in\C\,:\,\re z \leq S\}$ is compact and thus has bounded 
  imaginary parts. Furthermore, up to translations in $2\pi i\Z$ there are only
  finitely many
  tracts of $F$ that intersect the line $\{\re z = S\}$ at all
  (this follows from property (\ref{item:disjointunion}) in the definition
  of $\Blog$).
  We conclude that there is $C>0$ such that 
    $|F^{\circ k}(\zeta) - F^{\circ k}(\omega)|<C$ whenever 
    $\re F^{\circ k}(\zeta), \re F^{\circ k}(\omega)<S$. In particular,
\[
|\zeta-\omega| \leq \frac{1}{2^k}\cdot |F^{\circ k}(\zeta)-F^{\circ k}(\omega)|
                \leq \frac{C}{2^k}
\]
  by expansivity of $F$ (we have $|(F^{-1}_{T})'(z)|<1/2$ for any tract
  $T$). Since this happens for infinitely many $k$, it follows that
   $\zeta=\omega$, as required.
\end{proof}

Note that Lemma \ref{lem:realseparation} does not imply that either $\zeta$ or 
 $\omega$ escapes: indeed, it is conceivable that both points have 
 unbounded orbits but return to some bounded real parts infinitely many times. 
 In the next section, we introduce a property,
 called a \emph{head-start condition}, which is
 designed precisely so that this does not occur.
 
As mentioned in the introduction, Rippon and Stallard \cite{ripponstallardfatoueremenko} showed that the escaping
set of every entire function $f$ contains unbounded connected sets. The following theorem is a
version of this result for functions in $\Blog$. 
\begin{thm}[Existence of Unbounded Continua in $J_{\s}$]
\label{thm:ripponstallard}
For every $F\in\Blog$  there exists $K\geq 0$
   with the following property. 
   If $z_0\in J^K(F)$ and $\s$ is the external address of $z_0$, then
   there 
   exists an unbounded closed connected set $A\subset J_{\s}$ 
   with
   $\dist(z_0,A)\leq 2\pi$.
\end{thm}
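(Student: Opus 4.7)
The plan is to adapt the Rippon--Stallard argument for unbounded continua in the escaping set \cite{ripponstallardfatoueremenko} to the symbolic setting of external addresses. By the reduction discussed after Lemma~\ref{lem:hyperbolicexpansion}, I may assume $F\in\Blognorm$, so $H=\H$, and I choose $K>0$ sufficiently large depending only on $F$ (the precise value is dictated by the estimates of Step~1). Let $z_0\in J^K(F)$ have external address $\s = T_0T_1T_2\dots$ and set $z_n := F^{\circ n}(z_0)$, so that $\re z_n\geq K$ for all $n\geq 1$.

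\emph{Step 1 (pullback construction).} By property~(\ref{item:extensiontoclosure}) and Carath\'eodory's extension theorem, each conformal isomorphism $F:T_k\to \H$ extends to a homeomorphism $\overline{F}_k:\overline{T_k}\to\overline{\H}$ between closed Jordan domains in $\hat\C$, sending the unique end $\infty\in\partial T_k$ to $\infty\in\partial\H$. For each $n\geq 1$, I define inductively
\[
B_n^{(n)} := \text{(component of $\overline{T_n\cap\H_K}$ in $\hat\C$ containing $\infty$)},\qquad
B_n^{(k-1)} := \overline{F}_{k-1}^{-1}\bigl(B_n^{(k)}\bigr)\quad(1\leq k\leq n),
\]
and let $A_n := B_n^{(0)}\subset\overline{T_0}\cup\{\infty\}\subset\hat\C$. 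The central technical task is to verify, for $K$ large enough, that the composition is well-defined (each $B_n^{(k)}\subset\overline{\H}$, the domain of the next inverse branch) and that $z_n\in B_n^{(n)}$. Both of these reduce to the estimate that $\overline{F}_k^{-1}(\overline{\H_K})\subset\overline{\H}$ for every tract and that the ``right end'' $\overline{T_n\cap\H_K}$ is a single unbounded connected set in $\hat\C$; these in turn follow from the exponential character of $F$ (tracts have imaginary width $\leq 2\pi$, forcing $F$ to grow at least like $e^{z/2\pi}$) combined with the exponential separation of orbits in Lemma~\ref{lem:expansion}.

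\emph{Step 2 (passing to the limit).} The homeomorphism property gives that each $A_n$ is a subcontinuum of $\hat\C$ containing both $z_0$ (since $z_n\in B_n^{(n)}$ and the inverse branches carry $z_k$ back to $z_{k-1}$) and $\infty$. By construction, the sequence is nested decreasing, $A_{n+1}\subset A_n$. The classical continuum-theoretic result that a decreasing intersection of subcontinua in a compact Hausdorff space is itself a subcontinuum (see the appendix) then yields that $\hat A := \bigcap_{n\geq 1} A_n$ is a subcontinuum of $\hat\C$ with $\{z_0,\infty\}\subset\hat A$. Setting $A := \hat A\setminus\{\infty\}$, the construction ensures $F^{\circ k}(z)\in\overline{T_k}$ for every $z\in A$ and every $k\geq 0$, so $A\subset J_{\s}$; the set $A$ is unbounded because $\infty\in\hat A$; and $A$ is connected in $\C$ because each $A_n$ is homeomorphic to a closed half-disk with $\infty$ on the boundary arc (so $\infty$ is a non-cut endpoint, a property inherited by $\hat A$). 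Finally, $z_0\in A$, so $\dist(z_0,A)=0\leq 2\pi$.

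The main obstacle is the well-definedness of the iterated pullback in Step~1: a single pullback of $\overline{\H_K}$ under an inverse branch contracts real parts roughly from $K$ to $\log K$, so naively iterating would collapse the real parts to zero. The resolution is to exploit that the pullbacks follow the specific orbit $(z_n)$ with $\re z_n\geq K$: Lemma~\ref{lem:expansion} confines $B_n^{(k)}$ to a Euclidean neighborhood of $z_k$ of bounded radius, so its real parts remain above $0$. The $2\pi$ tolerance in the statement accommodates the inevitable correction arising from the imaginary width of tracts (equivalently the $2\pi i$-periodicity of $\T$); in the construction above, the containment $z_0\in A$ in fact yields the stronger conclusion $\dist(z_0,A)=0$.
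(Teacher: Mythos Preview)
Your argument has a genuine gap in Step~1. The sets $B_n^{(k)}$ are unbounded (they contain $\infty$ by construction), so your claim that Lemma~\ref{lem:expansion} ``confines $B_n^{(k)}$ to a Euclidean neighborhood of $z_k$ of bounded radius'' is simply false. More concretely, the well-definedness of the iterated pullback requires $B_n^{(k)}\subset\overline{\H}$ for every $k$, since $\FInv{T_{k-1}}$ is only defined on $\overline{\H}$. You have $B_n^{(n)}\subset\overline{\H_K}\subset\overline{\H}$ for free, but already $B_n^{(n-1)}=\FInv{T_{n-1}}(B_n^{(n)})$ is merely a subset of $\overline{T_{n-1}}$, and tracts are \emph{not} assumed to lie in $\H$ (even for normalized $F$). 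Nothing in Lemma~\ref{lem:expansion} prevents $\FInv{T_{n-1}}(\H_K)$ from meeting the left half-plane; indeed, for tracts that ``wiggle'' (as in Section~\ref{sec:counter}) this happens. The same problem undermines your nesting claim $A_{n+1}\subset A_n$, which would require $\FInv{T_n}(B_{n+1}^{(n+1)})$ to lie in the unbounded component of $\overline{T_n\cap\H_K}$.

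The paper resolves exactly this issue, and the $2\pi$ in the statement is not slack but the price of the fix. Rather than pulling back $\overline{T_k\cap\H_K}$, the paper first deletes the disk $B_{2\pi}(z_k)$ from $\overline{T_k}$ and takes the unbounded component $X_k(\overline{T_k})$. A short geometric argument using the $2\pi i$-periodicity of the tracts (the boundary arcs $\alpha^{\pm}$ must cross the vertical segment $z_k+i[-2\pi,2\pi]$) then forces $X_k(\overline{T_k})\subset\H$, so the pullback is legitimate; expansivity shrinks the deleted $2\pi$-ball to a $\pi$-ball under one pullback, and one repeats the deletion at the next level. This is why the limit set $A$ is only known to satisfy $\dist(z_0,A)\leq 2\pi$: the point $z_0$ may have been cut out. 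Your stronger conclusion $z_0\in A$ is not established and, absent a head-start condition, should not be expected (compare Proposition~\ref{prop:EscapingInCs}, where an additional hypothesis is needed to place $z_0$ in the unbounded component). Finally, your argument that $\hat A\setminus\{\infty\}$ is connected is not justified; the paper instead takes a component of the intersection and invokes the Boundary Bumping Theorem to obtain unboundedness.
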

\begin{proof}
 We may assume without loss of generality that $F$ is normalized. 
  Choose $K$ large enough so that no bounded component of
   $\H\cap\cl{T}$ intersects the line 
   $\{\re z = K\}$ and set $z_k := F^{\circ k}(z_0)$.
   If $S\subset\C$ is an unbounded 
   set such that $S\sm B_{2\pi}(z_k)$ has 
   exactly one unbounded component, 
   let us denote this component by 
   $X_k(S)$.
  
  We claim that $X_k(\cl{T}_k)$ is non-empty and contained in $\H$ for all $k\geq 1$. 
   (However, this set is \emph{not} necessarily contained in $\H_K$.) 
   Indeed, this is trivial if $\cl{T}_k\subset \H$. Otherwise,
   let $\alpha^-$ and $\alpha^+$ denote the two unbounded components
   of $\H\cap\partial T_k$. We claim that both $\alpha^-$ and
   $\alpha^+$ must intersect the vertical line segment
   $L := z_k + i[-2\pi,2\pi]$. Indeed, otherwise some $2\pi i\Z$-translate of
   $\alpha^-$ or $\alpha^+$ would separate $z_k$ from $\alpha^{\pm}$ in $\H$,
   which is not possible since $z_k$ belongs to the unbounded component of
   $T_k\cap\H$. 
   Hence it follows that the unbounded component of 
   $T_k\setminus L$, which contains $X_k(\cl{T}_k)$, is contained in
   $\H$. 

 In particular, we can
   pull back the set $X_k({T_k})$ 
   into $T_{k-1}$ using $\FInv{T_{k-1}}$. By expansivity of $F$,
 $\FInv{T_{k-1}}(X_k({T_k}))$   has distance at most
   $\pi$ from $z_{k-1}$. Continuing inductively,
   we obtain the sets
\[
   A_k := 
   X_0(\FInv{T_0}(X_1(\FInv{T_1}(\dots (X_{k-1}(\FInv{T_{k-1}}( X_k(\cl{T}_k))))\dots))))
\]
for $k\geq 1$; let $A_0=X_0(\cl{T}_0)$. 
  Each $\wh{A}_k\subset\Ch$ is a continuum, has distance at most $2\pi$ from
   $z_0$ and contains $\wh{A}_{k+1}$. 
  (Recall that $\wh{A}_k$ denotes the closure of $A_k$ in $\Ch$.)

Hence, the set $A'=\bigcap_{k\geq 0} \wh{A}_k$ has the same properties and there
  exists a component $A$ of $A' \setminus\{\infty\}$ with 
  $\dist(A,z_0)\leq 2\pi$. By definition, $A$ is closed and connected, and it 
  is unbounded by the Boundary Bumping Theorem 
  (Theorem \ref{thm:boundarybumping} in the appendix).
\end{proof}

\section{Functions Satisfying a Head-Start Condition}
\label{sec:head}

Throughout most of this section, 
 we will fix some function $F\in\Blog$.
 Fix an external address $\s$, and suppose that the 
 set $J_{\s}$ is a ray tail. Then $J_{\s}\cup\{\infty\}$ is homeomorphic to
 $[0,\infty]$, and as such possesses a natural total ordering.
 In this section, we will use a converse idea: we introduce a
 ``head-start condition'', which implies that the points in $J_{\s}$
 are totally ordered by their speed of escape, and deduce from this that
 $J_{\s}$ is (essentially) a ray tail. In the next section, we 
 develop sufficient conditions on $F$
 under which a head-start condition is satisfied.

\begin{defn}[Head-Start Condition]
\label{Def_HeadStart}
Let $T$ and $T'$ be tracts of $F$ and let
  $\phi:\R\to\R$ be a (not necessarily strictly) monotonically increasing 
  continuous function with
  $\phi(x)>x$ for all $x\in\R$. 
 We say that the pair $(T,T')$ satisfies
 the {\em head-start condition for $\phi$} if, for all
  $z,w\in \cl{T}$ with $F(z),F(w)\in \cl{T'}$, 
\[
\re w >\phi(\re z)\ \Longrightarrow\ \re F(w)>\phi(\re F(z))\;.
\]

An external address $\s$ satisfies the {\em head-start condition for
  $\phi$} if all consecutive pairs of tracts $(T_k,T_{k+1})$ satisfy the
  head-start condition for $\phi$, and if for all distinct
  $z,w\in J_{\s}$,  there 
  is $M\in\N$ such that 
  $\re F^{\circ M}(z)> \phi(\re F^{\circ M}(w))$ or
  $\re F^{\circ M}(w) > \phi(\re F^{\circ M}(z))$.

We say that $F$ satisfies a {\em head-start condition} if 
 every external address of $F$ satisfies the head-start condition for some $\phi$. 
 If the same function $\phi$ can be chosen for all external addresses, 
 we say that $F$ satisfies the {\em uniform head-start condition
 for $\phi$}.
\end{defn}

\begin{thm}[Ray Tails]
\label{thm:raytails}
Suppose that $F\in\Blog$ satisfies a head-start condition. Then for every 
escaping point $z$, there exists $k\in\N$ such that $F^{\circ k}(z)$ is on a ray tail $\gamma$. This ray tail is the unique arc in $J(F)$ connecting
  $F^{\circ k}(z)$ to $\infty$ (up to reparametrization).
\end{thm}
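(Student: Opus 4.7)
The plan is to use the head-start condition to equip each fiber $J_\s$ with a total order, show that the escaping points form an upward-closed subset, and then identify a suitable upper segment with an arc to infinity via Theorem~\ref{thm:ripponstallard}.

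Fix $z\in I(F)$ with external address $\s=T_0T_1\dots$ and let $\phi$ be the function from the head-start condition for $\s$. For $w,w'\in J_\s$ I declare $w\prec w'$ iff there exists $M\in\N$ with $\re F^{\circ M}(w')>\phi(\re F^{\circ M}(w))$. The tract-level head-start condition ensures that once such a strict inequality holds at some level $M$ it persists for all $M'\geq M$; combined with the distinguishing clause of Definition~\ref{Def_HeadStart}, $\prec$ becomes a strict total order on $J_\s$, and it is preserved by $F$. Moreover, if $w\prec w'$ and $w\in I(F)$ then eventually $\re F^{\circ M}(w')>\re F^{\circ M}(w)\to\infty$, so $w'\in I(F)$; hence escaping points form an upward-closed subset of $J_\s$.

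Next I pass to an iterate $z_k:=F^{\circ k}(z)$ large enough that $z_k\in J^K(F)$, where $K$ is the constant of Theorem~\ref{thm:ripponstallard}; the theorem then supplies an unbounded closed connected set $A\subset J_{\sigma^k\s}$ with $\dist(z_k,A)\leq 2\pi$. The total order restricts to $A$, and the upper segment $A_+:=\{w\in A:w\succeq z_k\}\cup\{\infty\}$ is non-empty (since $A$ is unbounded while $\{w\in A:w\prec z_k\}$ must stay bounded in real part) and contained in $I(F)\cup\{\infty\}$. The central claim is that $A_+$ is an arc. To prove it, I would show that $A\cup\{\infty\}$ is a linearly ordered continuum whose order topology agrees with the subspace topology inherited from $\Ch$: the ``coarser than'' direction uses continuity of the iterates $F^{\circ M}$ (a strict inequality witnessing $w\prec w'$ at level $M$ persists under small perturbations of $w,w'$), while the converse direction, and in particular the connectedness of $A_+$, rests on the quantitative separation of Lemma~\ref{lem:expansion}, which forces two points on the same external address that are close in $\C$ to be comparable and close in the order as well. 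Granting these topological equalities, $A\cup\{\infty\}$ is a compact connected linearly ordered Hausdorff space with distinct endpoints and is therefore homeomorphic to $[0,1]$ by a classical theorem, whence $A_+$ is an arc. A modest additional pullback argument, applying Theorem~\ref{thm:ripponstallard} at further iterates $z_{k+n}$ and pulling back through the inverse branches determined by $T_k,\dots,T_{k+n-1}$, lets me arrange that $z_k$ itself lies on the arc; uniform escape of real parts along the arc then follows because all its points dominate $z_k$ in the order, and the tract head-start condition transfers real-part gaps forward under iteration.

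Uniqueness is short: any arc $\gamma'\subset J(F)$ joining $z_k$ to $\infty$ has $F^{\circ n}(\gamma')$ connected for every $n$, so by property~(\ref{item:disjointunion}) of $\Blog$ (disjoint tract closures in $\C$) $\gamma'$ has constant external address $\sigma^k\s$, i.e.\ $\gamma'\subset J_{\sigma^k\s}$. On a totally ordered set an injective arc from a prescribed point to $\infty$ is forced to coincide with the full upper segment, so $\gamma'=A_+$ up to reparametrization. The main obstacle will be the identification of $A_+$ with an arc---concretely, matching the order topology from $\prec$ with the subspace topology from $\C$, and ensuring connectedness; both directions rely crucially on the quantitative estimate of Lemma~\ref{lem:expansion} together with the propagation built into Definition~\ref{Def_HeadStart}, and without these ingredients there is no reason that a generic Rippon--Stallard-type continuum should even be locally connected, let alone an arc.
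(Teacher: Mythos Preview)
Your overall architecture is the same as the paper's: introduce the speed order $\prec$ on $J_\s$, identify it as a total order, show the order topology on $\widehat{J_\s}$ coincides with the subspace topology from $\Ch$, and then invoke the arc characterisation of linearly ordered continua. The pullback step you mention (applying Theorem~\ref{thm:ripponstallard} at later iterates and contracting via the inverse branches) is exactly Proposition~\ref{prop:EscapingInCs} in the paper, so you have correctly identified the missing ingredient there even if you do not carry it out.

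There is, however, a genuine gap in your topology step. You argue the ``coarser'' direction correctly: a witness $\re F^{\circ M}(w')>\phi(\re F^{\circ M}(w))$ is stable under small perturbations, so order-intervals are open in $\Ch$. But the converse direction---showing that subspace-open sets are order-open, equivalently that $\C$-close points are order-close---does \emph{not} follow from Lemma~\ref{lem:expansion}. That lemma says that points which are Euclidean-far get separated further by $F$; it gives no control on how $\C$-close points compare in the order, and in fact there is no direct metric argument available here. The paper avoids this entirely with a soft observation: $\widehat{J_\s}$ is compact in the $\Ch$-topology and the order topology is Hausdorff, so the identity map (already shown continuous in one direction) is automatically a homeomorphism. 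You should replace your appeal to Lemma~\ref{lem:expansion} by this compactness argument.

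A second, smaller issue: you restrict attention to the Rippon--Stallard continuum $A$ rather than to all of $\widehat{J_{\sigma^k\s}}$. This creates two difficulties. First, $z_k$ need not lie in $A$ (Theorem~\ref{thm:ripponstallard} only gives $\dist(z_k,A)\le 2\pi$), so your definition of $A_+$ as $\{w\in A:w\succeq z_k\}$ may not contain $z_k$, and your claim that $\{w\in A:w\prec z_k\}$ is bounded is not justified. Second, your uniqueness argument needs the topologies to agree on all of $J_{\sigma^k\s}$, not just on $A$: an arbitrary arc $\gamma'\subset J_{\sigma^k\s}$ to $\infty$ is not a priori contained in $A$. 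Both problems disappear if, following the paper, you prove the topology equivalence on the full $\widehat{J_{\sigma^k\s}}$ (Proposition~\ref{prop:CsArc}); then the unbounded component is unique and is an arc, and the pullback argument (Proposition~\ref{prop:EscapingInCs}) places $z_k$ on it.
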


We devote the remainder of this section to the proof of Theorem \ref{thm:raytails}.

If $\s$ satisfies any head-start condition, the points in 
$J_{\s}$ are eventually ordered by their real parts: for any two points 
$z,w\in J_{\s}$, $F^{\circ k}(z)$ is to the right of $F^{\circ k}(w)$ for all sufficiently large $k$, or vice versa. 

\begin{deflem}[Speed Ordering]
\label{Def_SpeedOrder} \label{lem:totalorder}
Let $\s$ be an external address satisfying the head-start condition for
 $\phi$. For $z,w\in J_{\s}$, we say that $z\succ w$ if there exists $K\in\N$ 
 such that $\re F^{\circ K}(z)>\phi(\re F^{\circ K}(w))$. We extend this
 order to the closure $\widehat{J_{\s}}$ in $\Ch$ by the convention that
 $\infty\succ z$ for all $z\in J_{\s}$.
 
With this definition, $(\wh{J_{\s}},\succ)$ is a totally ordered space. Moreover, the order does not depend on $\phi$.
\end{deflem}
Note that if $z\succ w$, then $\re F^{\circ k}(z)>\phi(\re F^{\circ k}(w))$ for all $k\geq K$.
\begin{proof} By definition, $\re F^{\circ k}(z)<\phi(\re F^{\circ k}(z))$ for all
   $k\in\N$ and $z\in J_{\s}$. Hence ``$\succ$'' is non-reflexive. 

Let $a,b,c\in J_{\s}$ such that $a\succ b$ and $b\succ c$. Then, there exist
  $k,l\in\N$ such that $\re F^{\circ k}(a)>\phi(\re F^{\circ k}(b))$ and
  $\re F^{\circ l}(b)>\phi(\re F^{\circ l}(c))$. Setting $m:=\max\{k,l\}$, we 
  obtain from the head-start condition 
  that
  $\re F^{\circ m}(a)>\phi(\re F^{\circ m}(b))>
   \re F^{\circ m}(b)>\phi(\re F^{\circ m}(c))$. 
  Hence $a\succ c$ and ``$\succ$'' is transitive.

 By assumption, for any distinct $z,w\in J_{\s}$ there exists $k\in\N$ such that 
  $\re F^{\circ k}(w)> \phi(\re F^{\circ k}(z))$ or 
  $\re F^{\circ k}(z)> \phi(\re F^{\circ k}(w))$. It
   follows that any two distinct points are comparable under ``$\succ$''. 

 Furthermore, note that $w\succ z$ if and only if 
   $\re F^n(w)>\re F^n(z)$ for all sufficiently large $n$. This formulation
   is independent of $\phi$, proving the final claim.
\end{proof}

\begin{cor}[Growth of Real Parts]
\label{cor:growthofrealparts}
Let $\s$ be an external address that satisfies the head-start condition for
 $\phi$ and let $z,w\in J_{\s}$. If $w\succ z$, then
$w\in I(F)$. In particular, $J_{\s}\setminus I_{\s}$ consists of at most
 one point. 
\end{cor}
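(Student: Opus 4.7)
The plan is to derive this corollary directly by combining Lemma \ref{lem:realseparation} with the characterization of the order $\succ$ established at the end of Lemma \ref{lem:totalorder}. Recall that the last paragraph of that lemma's proof noted that $w\succ z$ holds if and only if $\re F^{\circ n}(w)>\re F^{\circ n}(z)$ for all sufficiently large $n$ (this is in fact what makes the order independent of $\phi$). This is the key bridge between the ordering and the escape condition.

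For the first assertion, suppose $w\succ z$ in $J_\s$. By the characterization above, there is some $N$ such that $\re F^{\circ n}(w)>\re F^{\circ n}(z)$ for all $n\geq N$. On the other hand, since $z$ and $w$ are distinct points sharing the external address $\s$, Lemma \ref{lem:realseparation} gives
\[
\lim_{n\to\infty}\max\bigl(\re F^{\circ n}(z),\re F^{\circ n}(w)\bigr)=\infty.
\]
For $n\geq N$ this maximum equals $\re F^{\circ n}(w)$, so $\re F^{\circ n}(w)\to\infty$; since $w\in J_\s$, this places $w$ in $I_\s$.

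For the ``in particular'' part, I would argue by contradiction: assume there are two distinct points $z,w\in J_\s\setminus I_\s$. Because $\s$ satisfies the head-start condition for $\phi$, the definition supplies some $M\in\N$ with either $\re F^{\circ M}(z)>\phi(\re F^{\circ M}(w))$ or the symmetric inequality, i.e., $z$ and $w$ are comparable under $\succ$. After swapping names if necessary, $w\succ z$, but then the first part forces $w\in I_\s$, contradicting $w\notin I_\s$.

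I do not anticipate any real obstacle here; the corollary is essentially a bookkeeping consequence of the two preceding lemmas once one observes that speed-ordering and eventual real-part ordering are the same. The only point to be careful about is to cite the $\phi$-free reformulation of $\succ$ from the end of the proof of Lemma \ref{lem:totalorder}, rather than the original definition, so that the head-start margin $\phi$ does not interfere when comparing with the $\max$ in Lemma \ref{lem:realseparation}.
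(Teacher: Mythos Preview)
Your proof is correct and is exactly the argument the paper has in mind: its proof simply reads ``This is an immediate corollary of Lemmas \ref{lem:realseparation} and \ref{lem:totalorder},'' and you have spelled out precisely how those two lemmas combine. The only minor remark is that for the ``in particular'' part you could equally well cite the totality of $\succ$ established in Lemma \ref{lem:totalorder} directly rather than re-deriving comparability from the head-start definition, but this is the same content.
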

\begin{proof} This is an immediate corollary of Lemmas
  \ref{lem:realseparation} and \ref{lem:totalorder}. 
 \end{proof}

\begin{prop}[Arcs in $J_{\s}$]
\label{prop:CsArc}
Let $\s$ be an external address satisfying the head-start condition for
 $\phi$. Then the topology of $\wh{J_{\s}}$ as a subset of
 the Riemann sphere $\Ch$ agrees with the order topology induced by
 $\succ$. In particular,
 \begin{enumerate}
  \item every component of $\wh{J_{\s}}$ is homeomorphic to
         a (possibly degenerate) compact interval, and
  \item if $J^K_{\s}\neq\emptyset$ for $K$ as in Theorem
   \ref{thm:ripponstallard}, then $J_{\s}$ has a unique unbounded component, which is a closed arc to infinity.
 \end{enumerate}
\end{prop}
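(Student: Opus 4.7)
My plan is to prove that the subspace topology on $\wh{J_{\s}}\subset\Ch$ coincides with the order topology induced by $\succ$; statements (a) and (b) will then follow from general facts about totally ordered spaces together with Theorem \ref{thm:ripponstallard}. Since $\wh{J_{\s}}$ is compact (being closed in $\Ch$) and every linearly ordered topological space is Hausdorff, the identity map from the subspace topology to the order topology will be a homeomorphism as soon as it is continuous. Thus it suffices to show that the sub-basic order-open sets $L_a:=\{v\in\wh{J_{\s}}:v\prec a\}$ and $U_a:=\{v\in\wh{J_{\s}}:v\succ a\}$ are open in the subspace topology, for each $a\in\wh{J_{\s}}$.

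The set $L_\infty=J_{\s}$ is open in $\wh{J_{\s}}$, and for $a\in J_{\s}$ the definition of $\succ$ gives
\[
   L_a \;=\; \bigcup_{N\in\N}\{v\in J_{\s}:\re F^{\circ N}(a)>\phi(\re F^{\circ N}(v))\},
\]
which is a union of subspace-open sets by continuity of $\phi$ and of $F^{\circ N}$ on $J(F)$. For $U_a$ with $a\in J_{\s}$, the analogous argument handles openness at every $b\in U_a\cap J_{\s}$, so the one delicate case is $b=\infty$; this is where I expect the main obstacle.

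To deal with $\infty$, I would use that each tract $T$ has imaginary height at most $2\pi$ (the same bound exploited in the proof of Lemma \ref{lem:expansion}) and real parts bounded below, so that for $v\in T$, $|v|\to\infty$ forces $\re v\to\infty$. Hence any $v\in\wh{J_{\s}}$ sufficiently close to $\infty$ lies in the first tract $T_0$ with $\re v>\phi(\re a)$; the head-start condition applied to the consecutive pairs $(T_k,T_{k+1})$ then propagates this inequality to all later iterates, yielding $\re F^{\circ k}(v)>\phi(\re F^{\circ k}(a))$ for all $k\geq 0$ and in particular $v\succ a$. Therefore $\{|v|>R\}\cap\wh{J_{\s}}\subset U_a$ for $R$ large, so $U_a$ is a neighborhood of $\infty$, completing the verification.

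Once the two topologies are identified, (a) will follow immediately: in a totally ordered compact Hausdorff space the connected components are order-convex, and a compact connected linearly ordered topological space is either a singleton or order-isomorphic (hence homeomorphic) to $[0,1]$. For (b), Theorem \ref{thm:ripponstallard} produces an unbounded closed connected set $A\subset J_{\s}$ whenever $J^K_{\s}\neq\emptyset$; its closure $\wh{A}\subset\Ch$ is a non-degenerate continuum containing $\infty$, so the component $C$ of $\wh{J_{\s}}$ containing $\wh{A}$ is a non-degenerate closed arc. Since $\infty$ is the maximum of the order, it is an endpoint of $C$, and $C\setminus\{\infty\}$ is the desired closed arc to $\infty$ in $J_{\s}$. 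Uniqueness is automatic, because any unbounded component of $J_{\s}$ must have $\infty$ in its closure in $\wh{J_{\s}}$, but $\infty$ lies in exactly one component of $\wh{J_{\s}}$.
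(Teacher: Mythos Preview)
Your proof is correct and follows essentially the same strategy as the paper: show the identity map from the subspace topology to the order topology is continuous (using compactness and Hausdorffness to conclude it is a homeomorphism), then invoke the order characterization of the arc together with Theorem~\ref{thm:ripponstallard}. The paper's own argument is slightly terser, declaring the openness of $U_a$ at $\infty$ ``analogous'' without spelling it out; your explicit treatment of this case via the bounded height and bounded-below real parts of $T_0$ is exactly what is needed, and indeed the inequality $\re v>\phi(\re a)$ already gives $v\succ a$ by taking $K=0$ in the definition, so the propagation step, while correct, is not even necessary.
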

\begin{proof} Let us first show that $\id:\wh{J_{\s}}\to (\wh{J_{\s}},\succ)$ is continuous. Since $\wh{J_{\s}}$ is compact and the order topology on $\wh{J_{\s}}$ is Hausdorff, this will 
 imply that $\id$ is a homeomorphism and that both topologies agree. It suffices to show that sub-basis elements for the order topology of the form
  $U^-_a:=\{w\in J_{\s}\,: \, a\succ w\}$ and
  $U^+_a:=\{w\in \wh{J_{\s}}\,: \, w\succ a\}$ are open in $\wh{J_{\s}}$ for any $a\in \wh{J_{\s}}$.
  We will only give a proof for the sets $U^-_a$; the proof for $U^+_a$ is analogous.

Let $w\in U^-_a$ and choose $k\in\N$ minimal such that 
 $\re F^{\circ k}(a)> \phi(\re F^{\circ k}(w))$. Since $\phi, \re$ and $F^{\circ k}$ are continuous, this is true for a neighborhood $V$ of $w$. It follows that $V\cap \wh{J_{\s}}\subset U^-_a$, hence $U^-_a$ is a neighborhood of $w$ in $\wh{J_{\s}}$.
 
Thus the topology of $\wh{J_{\s}}$ agrees with the order topology.
 Every connected component $C$ of $\wh{J_{\s}}$ is compact;
 it follows from a well-known characterization of
 the arc (Theorem \ref{thm:arccharacterization} in the appendix)
that
 $C$ is either a point or an arc. This proves (a). To prove (b), observe that existence follows from Theorem \ref{thm:ripponstallard}, while uniqueness follows because $\infty$ is the largest element of $(\wh{J_{\s}},\succ)$.
\end{proof}

\begin{prop}[Points in the Unbounded Component of $J_{\s}$]
\label{prop:EscapingInCs}
Let $\s$ be an external address that satisfies the head-start condition 
 for $\phi$. Then there exists $K'\geq 0$ such that
 $J^{K'}_{\s}$ is either empty or 
 contained in the unbounded component of $J_{\s}$
 (and this component is a closed arc).
 The value $K'$ depends on $F$ and $\phi$, but not on $\s$.
\end{prop}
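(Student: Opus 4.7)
I would first reduce to the normalized situation $F\in\Blognorm$, so $|F'|\ge 2$ on $\T$ and $H=\H$. Let $K$ be the constant of Theorem~\ref{thm:ripponstallard}; the plan is to choose $K'\ge K$ sufficiently large, depending on $F$ and $\phi$, so that every $z\in J^{K'}_\s$ lies in the unbounded component of $J_\s$. This unbounded component $C$ exists as a closed arc to infinity by Proposition~\ref{prop:CsArc}(b) applied to $z\in J^K_\s$, and the head-start hypothesis on $\s$ is precisely what enables this proposition.

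The key geometric step constructs, for every $n\ge 0$, an unbounded continuum in $C$ close to $z$. Applying Theorem~\ref{thm:ripponstallard} to $F^{\circ n}(z)\in J^K_{\sigma^n\s}$ yields an unbounded closed connected set $A_n\subset J_{\sigma^n\s}$ with $\dist(F^{\circ n}(z),A_n)\le 2\pi$. Pull back along the prescribed orbit via
\[
 \tilde A_n := \FInv{T_0}\circ\FInv{T_1}\circ\cdots\circ\FInv{T_{n-1}}(A_n) \subset \cl{T_0}\cap J_\s.
\]
Each $\FInv{T_k}:\H\to T_k$ is a conformal homeomorphism that extends continuously to $\Ch$, so $\tilde A_n$ is again closed, connected, and unbounded (points of $A_n$ with arbitrarily large real parts pull back to such points in $\cl{T_0}$), and hence is contained in the unique unbounded component $C$ of $J_\s$. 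Now pick $w_n\in A_n$ with $|F^{\circ n}(z)-w_n|\le 2\pi$ and let $\tilde w_n\in\tilde A_n$ be its pullback along the same chain; iterated pointwise expansivity $|F'|\ge 2$, together with a Koebe-distortion estimate on the round disk $B_{2\pi}(F^{\circ n}(z))\subset\H_{K'-2\pi}$ (valid once $K'$ is large enough), yields $|z-\tilde w_n|=O(2^{-n})$, so $\tilde w_n\to z$. Since each $\tilde w_n\in C$ and $C$ is closed, $z\in C$ follows.

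The technical heart is turning the pointwise expansion $|F'|\ge 2$ into a genuine geometric contraction of the $n$-fold iterated inverse branch $\FInv{T_0}\circ\cdots\circ\FInv{T_{n-1}}$, which is a composition of univalent maps on possibly thin tracts. This is where $K'$ enters decisively: taking $K'$ large forces every iterate of $z$ deep inside $\H$, so each inverse branch applied to $B_{2\pi}(F^{\circ n}(z))$ has Koebe distortion bounded by a constant independent of $n$. The dependence of $K'$ on $\phi$ is implicit, entering through Proposition~\ref{prop:CsArc} whose conclusion relies on the head-start data.
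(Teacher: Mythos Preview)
Your overall strategy---pull back the unbounded continua furnished by Theorem~\ref{thm:ripponstallard} and use expansivity to show they accumulate on $z$---matches the paper's. But there is a real gap at the pullback step. The inverse branches $\FInv{T_k}$ are defined only on $\H$, and nothing you have said guarantees $A_n\subset\H$: points of $J(F)$ may well have nonpositive real part, since tracts need not lie in $H$ (Figure~\ref{Fig:Tracts}). If $A_n\not\subset\H$ then $\FInv{T_{n-1}}(A_n)$ makes sense only on $A_n\cap\H$, which may be disconnected, so $\tilde A_n$ need not be connected, let alone lie in the unbounded component $C$ of $J_{\s}$. Worse, even if $A_n\subset\H$, its pullback lands in $\cl{T_{n-1}}$, which again need not be in $\H$; the same obstruction recurs at every intermediate step, and a Koebe estimate on a single disk $B_{2\pi}(F^{\circ n}(z))$ controls only the last branch.

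This is precisely where $\phi$ enters, and it is not implicit. The paper takes $K':=\max\{\phi(0)+1,K\}$ and, instead of the raw continua from Theorem~\ref{thm:ripponstallard}, works with the sets $S_k:=\{w\in J_{\sigma^k(\s)}:w\succeq z_k\}$ and their unbounded components $A_k$ (which are close to $z_k$ by Theorem~\ref{thm:ripponstallard}). The choice of $K'$ forces $\re z_k>\phi(0)\ge\phi(\re w)$ for any $w$ with $\re w\le 0$, so $z_k\succ w$, hence $w\notin S_k$ and therefore $A_k\subset\H$. Now $\FInv{T_{k-1}}(A_k)$ is defined, unbounded, and contained in $S_{k-1}$, so it lies in $A_{k-1}$; this nesting, together with $|F'|\ge 2$, gives $\dist(z_0,A_0)\le 2^{1-k}\pi\to 0$. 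Your remark that ``the dependence of $K'$ on $\phi$ is implicit, entering through Proposition~\ref{prop:CsArc}'' is therefore incorrect: the head-start function $\phi$ enters explicitly through $\phi(0)$, and its role is exactly to keep the entire chain of pullbacks inside $\H$.
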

\begin{proof} We may assume without loss of generality that $F$ is normalized,
  i.e.\ $F\in\Blognorm$.
 Let $K$ be the constant from Theorem \ref{thm:ripponstallard}, set
 $K' := \max\{\phi(0)+1,K\}$ and let $z_0\in J^{K'}_{\s}$. 
For each $k\geq 0$, we let $z_k := F^{\circ k}(z_0)$ and consider the set
\[
	S_k:= \{w\in J_{\sigma^k(\s)}: w\succeq z_k\};.
\]
  By Proposition \ref{prop:CsArc}, each $S_k$ has a unique
   unbounded component $A_k$ that is a closed arc. By Theorem \ref{thm:ripponstallard}, $A_k$
satisfies $\dist(z_k,A_k)\leq 2\pi$. 

Let us show $A_k\subset \H$ for $k\geq 1$, 
 so that we may apply $F^{-1}$ to it. Indeed, if $w\in J_{\sigma^k(\s)}$ with $\re w \leq 0$, then the choice of $K'$ and monotonicity of $\phi$ yield that $\re z_k > \phi(0) \geq \phi(\re w)$, and therefore $z_k \succ w$. Thus, $w\not\in S_k$. We conclude that $F^{-1}_{T_{k-1}}(A_{k})\subset A_{k-1}$, because it is unbounded and contained in $S_{k-1}$. Since $F$ is expanding, this means that 
\[ 
	\dist(A_0,z_0)\leq 2^{-k}\dist(z_k,A_k)\leq 2^{-(k-1)}\pi 
\]
for all $k\geq 0$. Thus $z_0\in A_0$, as required. That $A_0$ is an arc follows from Proposition \ref{prop:CsArc}.
\end{proof}

\begin{proof}[Proof of Theorem \ref{thm:raytails}.]
Let $z$ be an escaping point for $F$ and $\s$ its external address. 
 By hypothesis, there exists $\phi:\R\to\R$ such that $\s$ satisfies the 
 head-start condition for $\phi$. If $K'$ is the constant from Proposition 
 \ref{prop:EscapingInCs}, then there exists $k\geq 0$ such that
 $F^{\circ k}(z)\in J^{K'}_{\s}$ and 
 $\gamma_k:=\{w\in I_{\sigma^k(\s)}\,:\, w \succeq F^{\circ k}(z)\}$ 
 is an injective curve connecting $F^{\circ k}(z)$ to $\infty$.
 Furthermore, since the order topology agrees with the usual topology on
 $J_{\s}\cup\infty$, $\gamma_k$ is unique with this property.

 To show that $\gamma_k$ is a ray tail, we still need to show that 
  \[ \lim_{m\to\infty} \inf_{w\in\gamma_k} \re F^{\circ m}(w) = \infty. \]
  This follows from the head-start condition. 
  Indeed, for $w\in\gamma_k$ and $m\in\N$, we have 
  $\re F^{\circ m}(w)\geq \inf \{\phi^{-1}(\re F^{\circ (k+m)}(z))\}$, 
  because $w\succ z$ or $w=z$ (we have to take the infimum because $\phi$ 
  need not be invertible). This lower bound tends to infinity as 
  $m\to \infty$. 
\end{proof}

\begin{thm}[Existence of Absorbing Brush]
\label{thm:absorbing}
 Suppose that $F\in\Blog$ satisfies a head-start condition. Then 
  there exists a closed $2\pi i$-periodic 
  subset $X\subset J(F)$ with the following
  properties:
  \begin{enumerate} 
   \item $F(X)\subset X$;
   \item each connected component $C$ of $X$ is a closed arc to infinity all of
     whose points except
     possibly the finite endpoint escape; \label{item:arcs}
   \item every escaping point of $F$ enters $X$ after
      finitely many iterations. If $F$ satisfies the uniform head-start condition for some function, then there exists $K'>0$ such that $J^{K'}(F)\subset X$. \label{item:absorbing}
  \end{enumerate}

  If, additionally, $F$ is of disjoint type, then we may choose $X=J(F)$. 
\end{thm}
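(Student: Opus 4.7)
The plan is to define $X$ as the union, over all external addresses $\s$ of $F$, of the unbounded component $A_\s$ of $J_\s$. By Proposition \ref{prop:CsArc}, each nonempty $A_\s$ is a closed arc to infinity, and by Corollary \ref{cor:growthofrealparts} all but possibly its finite endpoint escape. Setting $X := \bigcup_\s A_\s$, the $2\pi i$-periodicity is immediate from that of the tract structure. Forward invariance follows because $F$ maps $A_\s$ continuously and injectively into $J_{\sigma\s}$ with real parts tending to $+\infty$ along the image (the ray-tail property from Theorem \ref{thm:raytails}), so $F(A_\s)$ lies in the unique unbounded component $A_{\sigma\s}$. To identify the components of $X$, I would use that tracts have pairwise disjoint closures accumulating only at infinity (property (iii) of $\Blog$): for each finite initial tract sequence $T_0,\dots,T_{k-1}$, the set $\{z\in X:F^{\circ j}(z)\in\cl{T_j},\ j<k\}$ is clopen in $X$, and the intersection of these over $k$ equals $A_\s$. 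Hence the quasi-component of any point of $A_\s$ in $X$ equals $A_\s$, and since $A_\s$ is connected it is the component; this gives (b). Property (c) is immediate from Theorem \ref{thm:raytails}, and under a uniform head-start condition the constant $K'$ in Proposition \ref{prop:EscapingInCs} is uniform in $\s$, so $J^{K'}(F)=\bigcup_\s J^{K'}_\s\subset X$.

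The main technical obstacle is showing $X$ is closed. Let $z_n\to z^*\in\C$ with $z_n\in A_{\s_n}$, and write $\s^*$ for the address of $z^*$. Because tracts have pairwise disjoint closures accumulating only at infinity, continuity of $F^{\circ k}$ forces $\s_n$ to agree with $\s^*$ on initial segments of length $k_n\to\infty$. I would pass to a Hausdorff-convergent subsequence of the continua $\wh{A_{\s_n}}\subset\Ch$, obtaining a continuum $K$ containing $z^*$ and $\infty$. For any $w\in K\setminus\{\infty\}$, approximants $w_n\in A_{\s_n}$ satisfy $F^{\circ k}(w_n)\in\cl{T_k^*}$ for $n$ large, so continuity yields $F^{\circ k}(w)\in\cl{T_k^*}$ and $w\in J_{\s^*}$. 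The Boundary Bumping Theorem (Theorem \ref{thm:boundarybumping}) applied to the continuum $K$ with subcontinuum $\{\infty\}$ forces every component of $K\setminus\{\infty\}$ to be unbounded in $\C$; in particular, the component containing $z^*$ is an unbounded closed connected subset of $J_{\s^*}$, which must lie in $A_{\s^*}$, so $z^*\in X$.

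For the disjoint-type case, I expect $X=J(F)$ to work, which amounts to showing $J_\s=A_\s$ for every $\s$. Since $\cl\T\subset H$, Lemma \ref{lem:hyperbolicexpansion} provides a uniform hyperbolic contraction factor for the inverse-branch compositions $F^{-1}_{T_0}\circ\dots\circ F^{-1}_{T_{n-1}}$ on $H$. Applying these to a suitable exhaustion by compact subsets of $\cl{T_n}$, the images form a nested sequence of connected compacta whose intersection recovers $J_\s$; a standard continuum-theoretic argument then shows $J_\s$ itself is connected, and hence (when nonempty) coincides with its unique unbounded component $A_\s$. Thus $J(F)=\bigcup_\s J_\s=\bigcup_\s A_\s=X$, which is automatically closed, and properties (a)--(c) for $X=J(F)$ follow from the general argument above.
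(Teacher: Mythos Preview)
Your proof is correct, and your set $X=\bigcup_{\s}A_\s$ coincides with the paper's (the unbounded components of $J(F)$ are exactly the unbounded components of the various $J_\s$, since tract closures are disjoint). The main difference is how closedness of $X$ is established. The paper does this in one stroke: $X\cup\{\infty\}$ is precisely the connected component of the compact set $J(F)\cup\{\infty\}$ containing $\infty$ (by Boundary Bumping, any point in that component lies in an unbounded component of $J(F)$, and conversely every unbounded component meets $\infty$); components of compact sets are closed, so $X$ is closed in $\C$. This bypasses your Hausdorff-limit argument entirely and also makes the component identification and the disjoint-type case immediate: in the disjoint-type situation the paper simply observes that $J(F)\cup\{\infty\}=\bigcap_{n}\bigl(F^{-n}(\cl{H})\cup\{\infty\}\bigr)$ is a nested intersection of continua, hence connected, so the component of $\infty$ is everything and $X=J(F)$.

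Your approach, by contrast, works address-by-address: sequential closedness via Hausdorff limits of the $\wh{A_{\s_n}}$, quasi-components to pin down the components of $X$, and hyperbolic contraction to see each $J_\s$ is connected in the disjoint-type case. All of this is sound, and the extra bookkeeping buys you somewhat finer information (e.g.\ explicit stability of addresses under convergence), but the paper's global viewpoint is considerably shorter and avoids the need to track addresses at all.
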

\begin{remark}
 It is not difficult to show that the set $X$ is in fact a 
  \emph{Cantor Bouquet}; i.e.\ homeomorphic to a ``straight brush'' in the
  sense of Aarts and Oversteegen \cite{aartsoversteegen}. However, we will not
  give a proof here.
\end{remark}
\begin{proof} Let $X$ denote the union of all the unbounded components of
  $J(F)$. By the Boundary Bumping Theorem \ref{thm:boundarybumping},
  $\wh{X}$ is the connected component of the compact set
  $J(F)\cup\{\infty\}$ that contains $\infty$, 
  hence $X\subset\C$ is a closed set. Clearly $X$ is
  $F$-invariant, and satisfies (\ref{item:arcs}) and
  (\ref{item:absorbing}) by Propositions
  \ref{prop:CsArc} and \ref{prop:EscapingInCs} (recall that the choice of $K'$ did not depend on the external address). 

 Recall that $F:\T\to H$ is of disjoint type if $\cl{\T}\subset H$. 
  In this case, 
  $J(F)\cup\{\infty\}$ is connected, since it is the 
  nested intersection of the unbounded compact connected sets
  $F^{-n}(\cl{H})\cup\{\infty\}$. 
  Hence it follows from the above that $X=J(F)$. 
\end{proof}

\section{Geometry, Growth \& Head-Start}
\label{sec:growth}

This section discusses geometric properties of tracts that imply a head-start 
 condition. Moreover, we show that (compositions of) functions of finite order
 satisfy these properties, hence completing the proof of Theorem
 \ref{thm:positive}.

Let $K>1$ and $M>0$. We say that $\s$ satisfies the 
  \emph{linear head-start condition} with constants
   $K$ and $M$ if it satisfies the head-start condition
for
\[
	\phi(t) := K\cdot t^+ + M\;,
\]
where $t^+=\max\{t,0\}$. 

We will restrict our attention to functions whose tracts do not grow too quickly in the imaginary direction.

\begin{defn}[Bounded Slope]
\label{defn:boundedslope}
Let $F\in\Blog$. We say that the tracts of $F$ have bounded slope (with
constants $\alpha,\beta>0$) if 
\[
	|\im z - \im w| \leq
          \alpha\,\max\{\re z, \re w,0\} + \beta
\]
whenever $z$ and $w$ belong to a common tract of $F$. We denote the class of 
 all functions with this property by $\Blog(\alpha,\beta)$, and use
 $\Blognorm(\alpha,\beta)$ to denote those that are also normalized.
\end{defn}
\begin{remark}
By property (\ref{item:expinjective}) in the definition of $\Blog$, this 
 condition is equivalent to the existence of
a curve $\gamma:[0,\infty)\to\T$ with $|F(\gamma(t))|\to\infty$ and
$\limsup |\im\gamma(t)|/\re\gamma(t) < \infty$. Hence if one tract of $F$ has bounded slope, then all tracts do.
\end{remark}

The bounded slope condition means that the absolute value of a point is 
 proportional to
 its real part. As we see in the next lemma, this easily implies that
 the second requirement of a linear head-start condition, that any two
 orbits eventually separate far enough for one to have a head-start over
 the other, is automatically satisfied when the tracts have bounded slope.

\begin{lem}[Linear Separation of Orbits]
\label{lem:linearseparation}
Let $F\in\Blognorm$, and let $\alpha,\beta>0$. Let $T$ be a tract of $F$,
 and suppose that $z,w\in \cl{T}$ satisfy $\re F(w)\geq \re F(z)$ and
 $|\im F(w) - \im F(z)| \leq \alpha \re F(w) + \beta$.
 \begin{enumerate}
  \item \label{item:realsepgeneral}
    There exists a constant
     $\delta=\delta(\alpha,\beta)$, depending only on $\alpha$ and $\beta$,
      with the following property: if $|z-w|\geq \delta$, then 
     \[ \re F(w) > e^{|z-w|/16\pi} \re F(z). \]
  \item \label{item:realseplinear}
    Let $K\geq 1$ and $Q\geq 0$. Then there is a constant 
    $\delta=\delta(\alpha,\beta,K,Q)$ with the following property:
   if $|z-w| \geq \delta$, then 
     \[ \re F(w) > K\re F(z) + |z-w| + Q. \]
 \end{enumerate}

 In particular, suppose that $F\in\Blognorm(\alpha,\beta)$, and let $\s$
  be an external address. If $z,w\in J_{\s}$ with $|z-w|\geq 
   \delta(\alpha,\beta,K,0)$, then 
\[ 
	\re F^{\circ k}(z) > K \re F^{\circ k}(w) + |z-w| 
                                                       \quad\text{or}\quad
      	\re F^{\circ k}(w) > K \re F^{\circ k}(z) + |z-w| 
\]
 for all $k\geq 1$.
\end{lem}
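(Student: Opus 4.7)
The plan is to set $d := |z-w|$, $r := \re F(z)$, and $R := \re F(w) \geq r > 0$, and then combine three ingredients: the exponential separation $|F(w)-F(z)| \geq e^{d/(8\pi)}\,r$ (valid for $d\geq 2$) from Lemma~\ref{lem:expansion}; the expansivity estimate $|F(w)-F(z)| \geq 2d$, which follows from $|F'|\geq 2$ on $\T$ combined with the convexity of $\H$ (integrating $(F^{-1})'$ along the straight segment from $F(z)$ to $F(w)$); and the Pythagorean upper bound $|F(w)-F(z)|^2 \leq (R-r)^2 + (\alpha R + \beta)^2$, or its weaker triangle-inequality form $|F(w)-F(z)| \leq (1+\alpha)R - r + \beta$, both extracted from the hypothesis on $\im F(w) - \im F(z)$.

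For part (a), I would plug the exponential lower bound into the triangle-form upper bound to obtain $(1+\alpha)R \geq (1 + e^{d/(8\pi)})\,r - \beta$, and then split into two regimes according to the size of $r$. When $r$ exceeds a threshold of order $\beta\,e^{-d/(8\pi)}$, this already gives $R/r \geq e^{d/(8\pi)}/(2(1+\alpha)) > e^{d/(16\pi)}$ once $d \geq 16\pi\log(2(1+\alpha))$; when $r$ is below that threshold, the quantity $e^{d/(16\pi)}\,r$ is itself exponentially small, whereas the expansivity estimate $R \geq (2d-\beta)/(1+\alpha)$ is linear in $d$ and easily beats it.

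For part (b), the same dichotomy governs the argument: in the ``large $r$'' regime the exponential estimate forces $R \geq c_{\alpha}\,e^{d/(8\pi)}\,r$, whose excess over $Kr$ is exponentially large in $d$ and dominates $d+Q$; in the ``small $r$'' regime $Kr$ is negligible and one needs a linear-in-$d$ lower bound on $R$ that beats $d+Q$. The main obstacle lies in this small-$r$ case: the triangle form of the upper bound is not tight enough, and one must use the full Pythagorean version $(R-r)^2 + (\alpha R + \beta)^2 \geq 4d^2$ to extract the sharper $R \gtrsim 2d/\sqrt{1+\alpha^2}$, carefully tracking how $\alpha$ and $\beta$ enter the estimate. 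The separating threshold between the two regimes, and the resulting $\delta(\alpha,\beta,K,Q)$, are chosen so that both cases succeed simultaneously.

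Finally, the ``in particular'' statement follows from part (b) applied with $Q=0$ together with iterated expansivity. A single application of (b), with the labels of $z$ and $w$ chosen so that the larger image real part is assigned to $w$ at that step, yields the claim at $k=1$; for $k\geq 2$, iterated expansivity gives $|F^{\circ(k-1)}(z) - F^{\circ(k-1)}(w)| \geq 2^{k-1}\,|z-w| \geq \delta$, so part (b) re-applies at the next level and the head-start propagates, with the additive term $|z-w|$ always dominated by the much larger current Euclidean distance.
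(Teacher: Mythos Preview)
Your approach to part (a) is correct, though it is more involved than the paper's. The paper observes that once $|z-w|\ge\delta$, expansivity forces $|F(w)-F(z)|\ge 2\delta'$ with $\delta'=\alpha+\beta+2$, which together with $|F(w)-F(z)|\le(\alpha+1)\re F(w)+\beta$ already gives $\re F(w)>1$; hence $|F(w)-F(z)|\le\delta'\re F(w)$, and Lemma~\ref{lem:expansion} yields
\[
  \re F(w)\ \ge\ \frac{|F(w)-F(z)|}{\delta'}\ \ge\ \frac{e^{|z-w|/8\pi}}{\delta'}\,\re F(z)\ >\ e^{|z-w|/16\pi}\,\re F(z)
\]
in one stroke, with no case split on the size of $r=\re F(z)$.

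Your part (b), however, has a genuine gap in the small-$r$ regime. The Pythagorean bound you propose gives, at best,
\[
  4d^2\ \le\ (R-r)^2+(\alpha R+\beta)^2\ \le\ (1+\alpha^2)R^2 + 2\alpha\beta R + \beta^2,
\]
which for large $d$ yields only $R\gtrsim 2d/\sqrt{1+\alpha^2}$. When $\alpha>\sqrt{3}$ this coefficient is strictly less than $1$, so you obtain $R>cd$ with $c<1$, which cannot dominate $d+Q$ no matter how large $d$ is. The expansivity route you mention is no better: $(1+\alpha)R+\beta\ge 2d$ gives $R\ge(2d-\beta)/(1+\alpha)$, with coefficient $2/(1+\alpha)<1$ once $\alpha>1$. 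In short, for large $\alpha$ none of your lower bounds on $R$ in the small-$r$ case grow fast enough to beat the additive term $|z-w|$.

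The paper closes this gap with a simple device that you are missing: pick $z'\in T$ with $\im F(z')=\im F(z)$ and $\re F(z')=\max(1,\re F(z))$. Expansivity gives $|z-z'|\le 1/2$, so $|w-z'|\ge\delta-1/2\ge\delta(\alpha,\beta)$, and part (a) applied to $z'$ and $w$ yields $\re F(w)>e^{|w-z'|/16\pi}\re F(z')$. The crucial point is that now $\re F(z')\ge 1$, so
\[
  \re F(w)\ >\ \bigl(|w-z'|+K+Q+\tfrac12\bigr)\re F(z')\ \ge\ K\re F(z')+|w-z'|+Q+\tfrac12\ \ge\ K\re F(z)+|w-z|+Q,
\]
for $\delta$ large enough that $e^{x/16\pi}>x+K+Q+\tfrac12$ whenever $x\ge\delta-\tfrac12$. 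This replaces your troublesome linear-in-$d$ lower bound by an exponential one, and no case analysis is needed. Your argument for the ``in particular'' clause is fine.
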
 
\begin{proof} 
  Set $\delta' := \alpha + \beta + 2$ and
   $\delta := \max\{\delta', 16\pi\log\delta'\}$. 
   The hypotheses on $z$ and $w$ imply that 
   $|F(w)-F(z)| \leq (\alpha+1)\re(F(w)) + \beta$. By
   expansivity of $F$, we have $|F(w)-F(z)|\geq 2\delta' > \alpha + 1 + \beta$,
   and thus
   $\re F(w) >  1$.
   We hence conclude that $|F(w) - F(z)|\leq \delta' \re F(w)$. 
   Because $|z-w|\geq 2$, Lemma \ref{lem:expansion} now yields 
  \begin{equation}
    \label{eqn:realsep}
	  \re F(w) \geq \frac{|F(w)-F(z)|}{\delta'} \geq
               \frac{\exp(|w-z|/8\pi)}{\delta'}\cdot \re F(z) > 
               e^{\frac{|w-z|}{16\pi}}\cdot\re F(z)\;,
   \end{equation}
 because $\exp(x/8\pi)/\delta'>\exp(x/16\pi)$ for all $x>16\pi\log\delta'$. 
 This proves part (\ref{item:realsepgeneral}). 

To prove part (\ref{item:realseplinear}), we now choose
  $\delta\geq \delta(\alpha,\beta)+1/2$ 
  sufficiently large that all $x\geq \delta-1/2$ satisfy 
   $e^{x/16\pi} > x + K + Q + 1/2$.
 
Let $z'\in T$ be the point with 
  $\re F(z')=\max(1,\re F(z))$ and $\im F(z')= \im F(z)$. Then
  $|z-z'|\leq 1/2$ by expansivity of $F$, so 
  we can apply~\eqref{eqn:realsep} to $z'$ and $w$:
   \begin{align*}
      \re F(w) &> e^{\frac{|w-z'|}{16\pi}}\cdot \re F(z') >
          (|w-z'| + K + Q + 1/2)\cdot \re F(z') \\
         &\geq
             K\re F(z') + |w-z'| + Q + 1/2 \geq
             K\re F(z) + |w-z| + Q. 
   \end{align*}

 The final claim follows from (\ref{item:realseplinear}) 
  by induction. 
\end{proof} 
\begin{remark}[Remark 1]
 The lemma shows that, if $F\in\Blognorm(\alpha,\beta)$ 
  satisfies the linear head-start condition for some $K$ and $M$, then $F$
  satisfies the linear head-start condition for \emph{all} 
  $\wt{K}\geq K$ and $M\geq \max(M,\delta(\alpha,\beta))$. 
\end{remark}
\begin{remark}[Remark 2] 
  By the final claim of the Lemma, 
  if $F\in\Blognorm(\alpha,\beta)$, then we only need to
  verify the first requirement of a linear head-start condition:
  if $w$ is ahead of $z$ in terms of real parts,
  the same should be true for $F(w)$ and $F(z)$. Note that this
  condition is not dynamical in nature;
  rather, it concerns the mapping behavior of the conformal map
  $F:T\to\H$. As such, it is not too difficult to translate it 
  into a geometrical condition. Roughly speaking,
  the tract should not ``wiggle'' in the sense of
  first growing in real parts to reach the larger point $w$, then
  turning around to return to $z$, until finally starting to grow again.
 (We exploit this idea in Section~\ref{sec:counter} to construct 
  a counterexample; compare also Figure \ref{Fig:CounterWiggle}). The precise 
  geometric condition 
  is as follows.
\end{remark}

\begin{defn}[Bounded Wiggling]
\label{Def_BoundedWiggle}
Let $F\in\Blog$, and let $T$ be a tract of $F$. We say that
$T$ has \emph{bounded wiggling} if there exist $K>1$ and $\mu>0$
such that for every $z_0\in \cl{T}$, every point $z$ on the hyperbolic geodesic of $T$ that connects $z_0$ to $\infty$ satisfies
\[
	 (\re z)^+ > \textstyle{\frac{1}{K}}\re z_0 - \mu\;.
\]
We say that $F\in\Blog$ has {\em uniformly bounded wiggling} if the wiggling of all tracts of $F$ is bounded by the same constants $K,\mu$.
\end{defn}

\begin{prop}[Head-Start and Wiggling for Bounded Slope]
\label{prop:geometryheadstart}
Let $F\in\Blognorm(\alpha,\beta)$, and let
$K>1$. Then the following are equivalent: 
\begin{enumerate}
   \item For some $M>0$, $F$ satisfies the uniform linear head-start
     condition with constants $K$ and $M$. 
      \label{item:ssatisfieslinearheadstart}
   \item For some $\mu>0$, the tracts of $F$ have uniformly bounded
     wiggling with constants $K$ and $\mu$.
      \label{item:tractshaveboundedwiggling}
   \item For some $M'>0$, the following holds. If $T$ is a tract of $F$ and
     $z,w\in \cl{T}$ with 
     $\re w > K(\re z)^+ + M$ and $|\im F(z) - \im F(w)|\leq
     \alpha\max\{\re F(z),\re F(w)\} +\beta$, then 
     $\re F(w) > K\re F(z) + M$.  \label{item:strongerlinearheadstart}
\end{enumerate}
\end{prop}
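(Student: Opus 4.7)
The plan is to establish the cycle $(c) \Rightarrow (a) \Rightarrow (b) \Rightarrow (c)$. Since $F \in \Blognorm(\alpha,\beta)$ we have $H = \H$ throughout, so $\re F > 0$ wherever $F$ is defined. The implication $(c) \Rightarrow (a)$ should be essentially immediate: if $z,w \in \cl{T}$ with $F(z),F(w) \in \cl{T'}$, the bounded-slope property applied to the tract $T'$ supplies the imaginary-part hypothesis of $(c)$ for free, and then $(c)$ gives exactly the uniform head-start conclusion with the same constant $M = M'$.

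For $(a) \Rightarrow (b)$ I would argue by contrapositive. Suppose wiggling fails for arbitrarily large $\mu$: there exist a tract $T$, a point $z_0 \in \cl{T}$, and a point $z$ on the hyperbolic geodesic $\gamma$ from $z_0$ to $\infty$ in $T$ with $\re z_0 > K(\re z)^+ + K\mu$. Since $F \colon T \to \H$ is a conformal isomorphism and geodesics to $\infty$ in $\H$ are horizontal rays, $F(\gamma)$ is the horizontal ray from $F(z_0)$ to $+\infty$, so in particular $\re F(z) > \re F(z_0)$ as $z$ lies beyond $z_0$ on $\gamma$. A perturbation argument (using $2\pi i$-periodicity of $\T$ and that tracts extend unboundedly to the right) allows me to arrange $F(z_0),F(z) \in \cl{T'}$ for some tract $T'$. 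Choosing $\mu$ with $K\mu \geq M$, the uniform head-start condition for $(T,T')$ applied to $z_0$ and $z$ then forces $\re F(z_0) > K \re F(z) + M \geq \re F(z)$, contradicting $\re F(z_0) < \re F(z)$.

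The heart of the argument is $(b) \Rightarrow (c)$. The key reformulation of bounded wiggling is via the conformal inverse $\sigma := \FInv{T} \colon \H \to T$: since $\sigma$ maps horizontal rays to hyperbolic geodesics to $\infty$ in $T$, wiggling reads
\[ (\re \sigma(\xi + t))^+ \geq \re \sigma(\xi)/K - \mu \quad \text{for all } \xi \in \H,\ t \geq 0. \]
Fix $z,w \in \cl{T}$ satisfying the hypotheses of $(c)$ with a sufficiently large $M'$ to be chosen, and suppose toward contradiction that $\re F(w) \leq K \re F(z) + M'$. Together with the imaginary-part assumption this bounds $\dist_\H(F(z),F(w)) \leq D$ for some $D = D(K,M',\alpha,\beta)$, and hence $\dist_T(z,w) \leq D$ by Pick. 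I then connect $F(z)$ to $F(w)$ in $\H$ by a horizontal segment followed by a vertical segment: along the horizontal part the wiggling reformulation controls $\re \sigma$ directly, and along the vertical part --- which has bounded hyperbolic length thanks to the imaginary-part hypothesis --- the standard estimate \eqref{eqn:standardestimate} combined with the bounded-slope bound $\dist(\cdot,\partial T) \leq \alpha \re(\cdot) + \beta$ controls the Euclidean change in real parts. Combining the two contributions yields $\re w \leq K \re z + C$ for an explicit $C = C(K,\mu,\alpha,\beta)$, contradicting $\re w > K(\re z)^+ + M'$ once $M' > C$.

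The most delicate piece is this last step. Translating the hyperbolic bound $\dist_T(z,w) \leq D$ into Euclidean comparability of real parts requires care, since the naive use of the standard estimate is effective only when the relevant product (roughly $\alpha \cdot D$) is sufficiently small; for general $\alpha$ one may need to iterate the bound or decompose the connecting path into several shorter pieces, each traversed using wiggling and the width bound in turn. The perturbation step in $(a) \Rightarrow (b)$ is also not entirely automatic, as one must verify that along the horizontal ray $F(\gamma)$ two suitably placed points can be found inside the closure of a common tract, which relies on the accumulation properties of the tract family at $\infty$.
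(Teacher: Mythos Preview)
Your cycle $(c)\Rightarrow(a)\Rightarrow(b)\Rightarrow(c)$ matches the paper's, and $(c)\Rightarrow(a)$ is handled identically.

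For $(b)\Rightarrow(c)$ your sketch is in the right spirit, but the paper organises it more cleanly by first invoking Lemma~\ref{lem:linearseparation}. Taking $M\geq\delta(\alpha,\beta,K,0)$, that lemma reduces the goal to showing merely $\re F(w)\geq\re F(z)$; so one assumes $\re F(z)>\re F(w)$ for a contradiction, and the same lemma then yields $\re F(z)\geq 1$. With this lower bound the hyperbolic distance from $F(z)$ to the ray $\{F(w)+t:t\geq 0\}$ is at most $\alpha+\beta$, hence $z$ lies within Euclidean distance $2\pi(\alpha+\beta)$ of the geodesic from $w$ to $\infty$, and bounded wiggling gives the contradiction $\re w\leq K(\re z)^+ + \wt M$. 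Your attempt to bound $\dist_\H(F(z),F(w))$ directly does not work as stated: without a lower bound on $\min\{\re F(z),\re F(w)\}$ the vertical hyperbolic length is not uniformly controlled, so the ``care'' you flag at the end is precisely the missing appeal to Lemma~\ref{lem:linearseparation}.

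The real divergence is in $(a)\Rightarrow(b)$. Your perturbation step is a genuine gap, not a technicality: given $F(z_0)$ and $F(z)=F(z_0)+t$ on a horizontal ray in $\H$, there is no reason any nearby pair lies in the closure of a common tract $T'$. The tracts may occupy an arbitrarily thin portion of $\H$, and moving the points far enough to land in one can destroy the inequality $\re z_0>K(\re z)^+ + M$ that you need. The paper bypasses this entirely by taking a \emph{dynamical} route: once (a) holds, the results of Section~\ref{sec:head} (the speed ordering and Propositions~\ref{prop:CsArc}, \ref{prop:EscapingInCs}) produce, for any $z\in\cl T$, a point $\zeta$ within bounded distance of $z$ and an injective curve $\gamma\subset I(F)\cap T$ from $\zeta$ to $\infty$ with $\zeta\prec w$ for every $w\in\gamma$. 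The definition of $\prec$ then gives $(\re w)^+\geq \re\zeta/K - M/K$ along $\gamma$, and Lemma~\ref{lem:boundedwiggling} transfers this estimate to the hyperbolic geodesic. Thus the paper leverages the ray-tail machinery already built from the head-start hypothesis, rather than trying to exhibit a single pair of tracts on which the head-start inequality fails.
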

\begin{proof} Condition (\ref{item:strongerlinearheadstart}) implies
   (\ref{item:ssatisfieslinearheadstart}) by definition.
 To show that (\ref{item:tractshaveboundedwiggling}) implies
  (\ref{item:strongerlinearheadstart}), let us set
  $\wt{M} := K\cdot(\mu + 2\pi(\alpha+\beta))$ and 
  define $M:=\max(\delta,\wt{M},1)$, where
  $\delta=\delta(\alpha,\beta, K, 0)$ 
  is  the constant from Lemma 
 \ref{lem:linearseparation}. Let $T$ be a tract of $F$ and let
  $z,w\in \cl{T}$ be as in 
  (\ref{item:strongerlinearheadstart}). Then $|z-w| > M$ and, 
  by Lemma \ref{lem:linearseparation}
  (\ref{item:realseplinear}), it suffices to show that 
  $\re F(w) \geq \re F(z)$.

 So suppose by way of contradiction that $\re F(z) > \re F(w)$. Since
  $M\geq 1$, we see from Lemma \ref{lem:linearseparation} that  
  $\re F(z)\geq |z-w| > M \geq 1$. Set 
  $\Gamma := \{F(w)+t\,:\,t\geq 0\}$ and 
  $\gamma:=F_{T}^{-1}(\Gamma)$; in other words, 
  $\gamma$ is the geodesic
  of $T$ connecting $w$ to $\infty$. The assumption on $F(z)$ and $F(w)$
 ensures that
\[
	\dist_{T}(z,\gamma) = 
        \dist_{\H} (F(z),\Gamma) \leq 
         \frac{|\im F(z)-\im F(w)|}{\re F(z)}\leq \alpha + \beta\;.
\]
  Therefore, $\dist(z,\gamma)\leq 2\pi(\alpha+\beta)$ 
  by the standard estimate (\ref{eqn:standardestimate}), and consequently 
  $\re z + 2\pi(\alpha+\beta) \geq \min_{\zeta\in\gamma}\re \zeta$. By
  the bounded wiggling condition, we also have
  $(\re\zeta)^+ \geq \frac{1}{K}\re w - \mu$ for all $\zeta\in\gamma$. 
  Thus
\[ \re w \leq K((\re z)^+ + \mu + 2\pi(\alpha+\beta)) < 
              K(\re z)^+ + \wt{M} \leq K\re z + M \;,
\]
a contradiction. 

Now suppose that
(\ref{item:ssatisfieslinearheadstart}) holds. Let $T$ be a tract and 
 $z\in \cl{T}$. 
 We use the results of the previous section. These imply, in particular, that 
 there is an injective curve $\Gamma\subset I(F)\cap\H$ such that
 $\Gamma\cup \{\infty\}$ is an arc. Since
 $I(F)$ is $2\pi i$-periodic, we may choose $\Gamma$ such that
 $\dist(F(z),\Gamma)<\kappa$, where $\kappa>0$ is a constant that is independent
 of $T$ and $z$. Pulling back, we obtain a point $\zeta\in T$ that can be
 connected to $z$ by a curve of Euclidean length at most $\kappa/2$,
 and to $\infty$ by an injective curve $\gamma\subset I(F)$. Recall
 that $\zeta\prec w$ for all $w\in\gamma$ (where $\prec$ is the speed ordering
 from the previous section). By definition of $\prec$, we have 
\[
	(\re w)^+ \geq \frac{\re \zeta}{K}-\frac{M}{K}
\]
 for all $w\in \gamma$. 
Hence there exists a curve $\gamma'\subset T$ connecting 
  $z$ to $\infty$ such that for every $w\in\gamma'$,
\[
	(\re w)^+ \geq \frac{\re \zeta}{K}-\frac{M}{K}-\kappa/2 
    \geq \frac{\re z}{K}-\frac{\kappa}{K}-\frac{M}{K}-\kappa/2\;.
\]
Now (\ref{item:tractshaveboundedwiggling})
   follows easily by general principles of hyperbolic
 geometry (see Lemma \ref{lem:boundedwiggling} in the appendix).
\end{proof}

We now  consider functions of finite order. 
\begin{defn}[Finite Order]
\label{Def_FiniteOrder}
We say that $F\in\Blog$ has \emph{finite order} if
\[ 
	\log \re F(w) = O(\re w) 
\]
as $\re w\to\infty$ in $\T$.
\end{defn}
Note that this definition ensures that $f\in\B$ has finite order (i.e.\ 
\[
\lim_{r\to\infty}\sup_{|z|=r}\frac{\log\log |f(z)|}{\log|z|}<\infty)
\]
if and only if any logarithmic transform $F\in\Blog$ of $f$
 has finite order in the sense of Definition \ref{Def_FiniteOrder}.

\begin{thm}[Finite Order Functions have Good Geometry]
   \label{thm:finiteorder}
 Suppose that $F\in\Blognorm$ has finite order. Then the tracts of
  $F$ have bounded slope and (uniformly) bounded wiggling.
\end{thm}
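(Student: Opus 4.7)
I work with the conformal inverses $\phi_T := (F|_T)^{-1}:\H\to T$ for each tract $T$; expansivity $|F'|\geq 2$ gives $|\phi_T'|\leq \tfrac{1}{2}$ throughout $\H$. The finite-order hypothesis $\log\re F(w)=O(\re w)$ yields constants $\rho,R_0>0$ such that $\re F(w)\leq e^{\rho\re w}$ whenever $w\in\T$ with $\re w\geq R_0$. Equivalently,
\[
  \re\phi_T(\zeta)\;\geq\;\tfrac{1}{\rho}\log\re\zeta
\]
for $\re\zeta$ large enough, uniformly over all tracts $T$ (since the finite-order estimate is global in $F$). This lower bound on $\re\phi_T$ is the only new input beyond what already holds for general $F\in\Blognorm$.

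\textbf{Bounded slope.} Fix a tract $T$ and a point $w\in T$ with $\re w$ large, and set $\zeta_0:=F(w)=x+iy\in\H$. I plan to apply Koebe's distortion theorem to the conformal map $\phi_T$ on the hyperbolic disk $D(\zeta_0,x/2)\subset\H$, which gives
\[
   \diam\,\phi_T\bigl(D(\zeta_0,x/4)\bigr)\;\leq\;C_0\,x\,|\phi_T'(\zeta_0)|,
\]
while by the Koebe $1/4$-theorem this image contains a Euclidean disk of radius $\geq c_0\,x\,|\phi_T'(\zeta_0)|$ around $w$. Because $\exp|_T$ is injective and because distinct tracts (including $2\pi i$-translates of $T$) are disjoint and accumulate only at infinity, the image $\phi_T(D(\zeta_0,x/4))$ cannot span too large an imaginary range without colliding with a translate of $T$; combining this geometric constraint with the finite-order lower bound $\re\phi_T(\zeta_0)\geq\tfrac{1}{\rho}\log x$ pins down $|\phi_T'(\zeta_0)|$ up to an absolute factor, and from the resulting Koebe comparison I obtain $|\im w|\leq\alpha\re w+\beta$ for constants $\alpha,\beta$ independent of $T$. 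This is the main technical step; the interplay between the conformal distortion estimate and the logarithmic lower bound on $\re\phi_T$ is what requires the most care, together with verifying that the implicit constants can be chosen uniformly over tracts.

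\textbf{Bounded wiggling.} Once bounded slope is established, so that $F\in\Blognorm(\alpha,\beta)$, I deduce uniform bounded wiggling via Proposition~\ref{prop:geometryheadstart} by verifying its condition (c). Let $z,w\in\cl{T}$ satisfy $\re w>K(\re z)^++M$ and $|\im F(z)-\im F(w)|\leq\alpha\max\{\re F(z),\re F(w)\}+\beta$, for $K>1$ fixed and $M$ to be chosen large. For $M$ sufficiently large, $|z-w|\geq M-|\re z|$ is large, so Lemma~\ref{lem:expansion} (Exponential Separation) forces $|F(z)-F(w)|$ to dominate $\min\{\re F(z),\re F(w)\}$ by an exponential factor; combined with the assumed bound on $|\im F(z)-\im F(w)|$, this rules out $\re F(z)>\re F(w)$ and so yields $\re F(w)\geq\re F(z)$. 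Lemma~\ref{lem:linearseparation}(b) (Linear Separation of Orbits) then applies with constants $\alpha,\beta,K,Q$ and gives $\re F(w)>K\re F(z)+|z-w|+Q>K\re F(z)+M$ after choosing $M$ large enough relative to $Q$ and the $\delta(\alpha,\beta,K,Q)$ of that lemma. This verifies condition (c) of Proposition~\ref{prop:geometryheadstart}, which in turn produces uniform bounded wiggling with appropriate constants $K,\mu$, completing the proof.
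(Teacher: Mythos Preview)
Your argument for bounded wiggling has a genuine gap: it never invokes the finite-order hypothesis. You attempt to verify condition (c) of Proposition~\ref{prop:geometryheadstart} using only Lemma~\ref{lem:expansion}, Lemma~\ref{lem:linearseparation}, and the bounded-slope assumption. But these ingredients hold for \emph{every} $F\in\Blognorm(\alpha,\beta)$, so if your deduction were valid it would show that bounded slope alone implies bounded wiggling. This is false: the function constructed in Section~\ref{sec:counter} lies in $\Blognorm$, has its single tract inside a horizontal strip (hence has bounded slope), and is built precisely so that it fails every linear head-start condition --- equivalently, by Proposition~\ref{prop:geometryheadstart}, its tract does \emph{not} have bounded wiggling. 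Concretely, the step ``this rules out $\re F(z)>\re F(w)$'' does not follow: from $|F(z)-F(w)|\geq e^{|z-w|/8\pi}\min\{\re F(z),\re F(w)\}$ and $|\im F(z)-\im F(w)|\leq \alpha\re F(z)+\beta$ you get no contradiction when $\re F(z)$ is itself of order $e^{|z-w|/8\pi}\re F(w)$, which is exactly what happens in a wiggling tract.

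The paper's proof of bounded wiggling is direct and does use finite order in the essential place. Given $z\in\cl{T}$, look at the geodesic $\gamma(t)=F_T^{-1}(F(z)+t)$ to $\infty$. The hyperbolic distance from $z$ to $\gamma(t)$ is at most $\log(1+t)$, so by the standard estimate $\re z-\re\gamma(t)\leq 2\pi\log(1+t)\leq 2\pi\log\re F(\gamma(t))$. Now the finite-order bound $\log\re F(\gamma(t))\leq \rho\,\re\gamma(t)+M$ converts this into $\re\gamma(t)\geq \tfrac{1}{1+2\pi\rho}\re z - \tfrac{2\pi M}{1+2\pi\rho}$, which is exactly the bounded-wiggling inequality.

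For bounded slope, your Koebe sketch is too vague to assess --- you assert that the distortion estimate together with $\re\phi_T(\zeta_0)\geq\tfrac{1}{\rho}\log x$ ``pins down $|\phi_T'(\zeta_0)|$'' and that this yields $|\im w|\leq\alpha\re w+\beta$, but you do not indicate how a derivative bound translates into control on $\im w$. The paper's route (its Theorem~\ref{thm:spiral}) is short: follow the central geodesic $\gamma(t)=F_T^{-1}(t)$, note $|\gamma(t)-\gamma(1)|\leq 2\pi\,\ell_T(\gamma|_{[1,t]})=2\pi\log t\leq 2\pi\rho\,\re\gamma(t)$, and read off the slope bound from this asymptotic curve.
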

\begin{proof} 
By the Ahlfors non-spiralling theorem (Theorem \ref{thm:spiral}), 
 $F\in\Blognorm(\alpha,\beta)$ for some constants $\alpha,\beta$.
 By the finite-order condition, there are $\rho$ and $M$ such that
$\log \re F(z) \leq \rho \re z + M$ for all $z\in\T$. Let $T$ be a tract of $F$ and $z\in \cl{T}$. 

Suppose first that $\re F(z) \geq 1$. Consider the geodesic $\gamma(t) := F^{-1}_T( F(z) + t )$ (for $t\geq 0$). Since the hyperbolic distance between $z$ and $\gamma(t)$ is at most $\log(1+t)$, we have 
\[
	 \re z - \re\gamma(t) \leq
     2\pi \log(1+t) \leq 2\pi \log \re F(\gamma(t)) \leq
                2\pi(\rho\re\gamma(t) + M) \;. 
\]
In other words, $\re z \leq (1+2\pi\rho)\re\gamma(t) + 2\pi M$, i.e.\
\[
    \re \gamma(t) \geq \frac{1}{1+2\pi\rho} \re z - \frac{2\pi M}{1+2\pi\rho} \;. 
\]
Since $z$ was chosen arbitrarily, $F$ has uniformly bounded wiggling with constants $1/(1+2\pi\rho)$ and $2\pi M/(1+2\pi\rho)$.

If $\re F(z)<1$, then by expansivity of $F$ 
 we can connect $z$ to a point $w\in T$ with $\re F(w)\geq 1$ 
 by a curve of bounded Euclidean diameter. 
\end{proof}

To complete the proof of Theorem \ref{thm:positive}, it only 
 remains to show that
 linear head-start conditions are preserved under composition. In logarithmic
 coordinates, this is given by the following statement;
 let $\tau_a(z)=z-a$ for $a\ge 0$ and $\H_a:=\{z\in\C\colon\re(z)>a\}$.

\begin{lem}[Linear Head-Start is Preserved by Composition]
\label{lem:composition}
Let $F_i\colon \T_{F_i}\to\H$ be in $\Blognorm$, for 
 $i=1,2,\dots,n$. Then there is an $a\ge 0$ so that 
 $G_a:=\tau_a\circ F_n\circ\dots\circ F_1\in\Blognorm$ on appropriate tracts 
 $\T_a\subset \T_{F_1}$, so that $G_a$ is a conformal isomorphism from each 
 component of $\T_a$ onto $\H$. 
 If $F_1$ has bounded slope and all $F_i$ satisfy 
 uniform linear head-start conditions,
  then $G_a$ also has bounded slope and satisfies a 
  uniform linear head-start condition.
\end{lem}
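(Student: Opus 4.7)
The plan is to verify the three conclusions in turn, with an induction on $n$ organizing the nontrivial part.

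\emph{Setup of $G_a$.} Using the expansivity estimate~(\ref{eqn:expansion}) for each factor, I would choose $a$ large enough that, for every $i$, every component of $F_i^{-1}(\H_a)$ is a Jordan subdomain of a tract of $F_i$ on which $F_i$ restricts to a conformal isomorphism onto $\H_a$. Setting
\[
\T_a := F_1^{-1}\bigl(F_2^{-1}(\dots F_n^{-1}(\H_a)\dots)\bigr),
\]
the map $G_a=\tau_a\circ F_n\circ\dots\circ F_1$ sends each component of $\T_a$ conformally onto $\H$. The remaining $\Blognorm$ axioms (Jordan tracts with real parts bounded below but unbounded above, $2\pi i$-periodicity, disjoint closures accumulating at infinity, and expansivity) then follow from the corresponding properties of $F_1$ together with $\T_a\subset\T_{F_1}$, the chain rule for derivatives, and the choice of $a$.

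\emph{Bounded slope.} Since every tract of $G_a$ is contained in a tract of $F_1$, and the bounded-slope inequality is a statement about pairs of points in a common tract, the same constants $\alpha,\beta$ work for $G_a$.

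\emph{Uniform linear head-start.} I would proceed by induction on $n$. The base case $n=1$ is immediate since $\tau_a$ only shifts constants. For the inductive step, let $b$ be the shift provided by the hypothesis so that $H:=\tau_b\circ F_{n-1}\circ\dots\circ F_1\in\Blognorm$ has bounded slope and uniform linear head-start. Writing $G_a=(\tau_a\circ F_n\circ\tau_{-b})\circ H$, the outer factor is a conjugate of $F_n$ that still lies in $\Blognorm$ and satisfies uniform linear head-start. Thus everything reduces to the $n=2$ case: given $F_1\in\Blognorm$ with bounded slope and head-start and $F_2\in\Blognorm$ with head-start, show that $G=\tau_a\circ F_2\circ F_1$ satisfies a uniform linear head-start condition.

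For $n=2$, Proposition~\ref{prop:geometryheadstart} reduces the head-start claim to proving uniformly bounded wiggling of the tracts of $G$. For a tract $T$ of $G$, contained in a tract $T^1$ of $F_1$, and $z_0\in\cl{T}$, the geodesic $\gamma(t):=G_T^{-1}(G(z_0)+t)$ of $T$ from $z_0$ to $\infty$ satisfies $F_1\circ\gamma=:\gamma_1\subset T^2$ for some tract $T^2$ of $F_2$, and $F_2\circ\gamma_1$ is a horizontal ray in $\H$. I would combine the uniform linear head-start of $F_2$, applied along $\gamma_1$ to obtain a lower bound of the form $\re\gamma_1(t)\geq \re F_1(z_0)/K_2-\mu_2$, with the bounded-slope characterization of linear head-start for $F_1$ (Proposition~\ref{prop:geometryheadstart}(\ref{item:strongerlinearheadstart}) together with Lemma~\ref{lem:linearseparation}), to pull this estimate back through $F_1$ and derive $(\re\gamma(t))^+\geq\re z_0/K-\mu$ with constants independent of $T$ and $z_0$.

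\emph{Main obstacle.} The principal technical difficulty is that $F_2$ is not assumed to have bounded slope, so its tracts need not satisfy bounded wiggling and the lower bound on $\re\gamma_1$ cannot be read off from the geometric wiggling characterization of Proposition~\ref{prop:geometryheadstart}; instead it must be extracted directly from the orbital head-start condition of $F_2$. Translating this estimate through $F_1$ into a bound on $\re\gamma$ then requires delicate use of the bounded-slope property of $F_1$ to supply the imaginary-part control needed to invoke Proposition~\ref{prop:geometryheadstart}(\ref{item:strongerlinearheadstart}) for $F_1$. Keeping the constants uniform across all external addresses through the induction is precisely what makes the uniform form of the head-start hypothesis essential.
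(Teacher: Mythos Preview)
Your approach diverges from the paper's at the key technical step and leaves a genuine gap.

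The paper does not pass through bounded wiggling or Proposition~\ref{prop:geometryheadstart} at all. Instead it verifies the head-start condition for $G_a$ directly: set $K=\max_i K_i$ and $M=\max(\delta,\max_i M_i)$ with $\delta=\delta(\alpha,\beta,K,0)$ from Lemma~\ref{lem:linearseparation}. If $z,w$ lie in a common tract of $F_i$ with $\re w>K\re z+M$, then $|w-z|>M\ge\delta$, so Lemma~\ref{lem:linearseparation} forces either $\re F_i(w)>K\re F_i(z)+M$ or $\re F_i(z)>K\re F_i(w)+M$; the head-start hypothesis on $F_i$ with its own constants $K_i\le K$, $M_i\le M$ rules out the second. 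Thus each $F_i$ satisfies the head-start inequality with the \emph{same} constants $K,M$, and the inequality propagates through the composition one factor at a time. No induction on $n$ and no geometric wiggling argument are needed.

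Your route --- reduce to $n=2$ and then prove bounded wiggling of the tracts of $G$ by bounding $\re\gamma_1(t)$ along the $F_1$-image of the geodesic --- runs into exactly the obstacle you name and does not overcome it. The head-start condition for $F_2$ constrains only pairs $z,w$ in a tract of $F_2$ whose images $F_2(z),F_2(w)$ lie in a \emph{common tract of $F_2$}; but $F_2(\gamma_1(0))$ and $F_2(\gamma_1(t))$ lie on a horizontal ray in $\H$, not in any prescribed tract. So the ``orbital head-start condition of $F_2$'' simply does not speak to these points, and the bound $\re\gamma_1(t)\ge\re F_1(z_0)/K_2-\mu_2$ cannot be extracted from it. The paper's chaining argument sidesteps this entirely by staying inside the head-start framework and using Lemma~\ref{lem:linearseparation} at each step rather than reaching for the geometric characterization.
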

\begin{proof} 
There is an $a_2\ge 0$ so that $F_2^{-1}(\H_{a_2})\subset\H$; there is an $a_3\ge 0$ so that $F_3^{-1}(\H_{a_3})\subset \H_{a_2}$, etc.. Finally, there is an $a=a_n\ge 0$ so that $(F_n\circ\dots\circ F_1)^{-1}$ is defined on all of $\H_a$. Let $\T_a:=(F_n\circ\dots\circ F_1)^{-1}(\H_a)\subset\T_{F_1}$. Then $F_n\circ\dots\circ F_1$ is a conformal isomorphism from each component of $\T_a$ onto $\H_a$, and the first claim follows. In particular, the tracts of $G_a$ have bounded slope.

For $i=1,\dots,n$, let $K_i$ and $M_i$ be the constants for the linear 
 head-start condition of $F_i$, and set $K:=\max_i\{K_i\}$ 
 and $M:=\max(\delta,\max_i M_i)$, where $\delta=\delta(\alpha,\beta,K,0)$ 
 is the constant from Lemma \ref{lem:linearseparation}.
Let $T$ be a tract of $F_i$ and $w,z\in T$, such that 
 $\re w > K \re z + M$. Then, $|w-z|\geq \re w - \re z > M = \delta$, and 
 Lemma \ref{lem:linearseparation} gives that
\[
	\re F_i(w) > K \re F_i(z) + M \quad\text{or}\quad \re F_i(z) > K \re F_i(w) + M\;.
\]
Since $F_i$ satisfies a head-start condition, the first inequality must hold. 
 Hence, all $F_i$ satisfy a linear head-start condition with constants $K,M$, 
 and it is now easy to see that $G_a$ does, too.
\end{proof}

\begin{proof}[%
    Proof of Theorem \ref{thm:positive} and Corollary \ref{cor:mero}]  
  Let $f_1,\dots, f_n\in\B$ be functions of finite order. By applying 
  a suitable affine change of variable, to all $f_i$, we may assume without
  loss of generality that each $f_i$ has a \emph{normalized} logarithmic
  transform $F_j\in\Blog$. 
  By Theorem
  \ref{thm:finiteorder}, each $F_j$ satisfies a linear head-start condition, and by 
  Lemma~\ref{lem:composition},  
  $G_a:=\tau_a\circ F_n\circ\dots\circ F_1\in\Blog$ satisfies a linear 
  head-start condition. (The purpose of $\tau_a$ is only to arrange the maps 
  so that their image is all of $\H$.) Now, on a sufficiently restricted 
  domain, $F:=G_a\circ \tau_a^{-1}$ is a logarithmic transform of 
  $f=f_n\circ\dots\circ f_1$ and satisfies a linear head-start condition. 
  Thus every escaping point of $F$, and hence of $f$, is eventually mapped
  into some ray tail. By Proposition \ref{prop:classification}, this
  completes the proof of Theorem \ref{thm:positive}.

 The proof of Corollary \ref{cor:mero} is analogous.
  (Recall that the order of a meromorphic function is defined in
   terms of its Nevanlinna characteristic. However,
   if $f$ has finite order, then it is well-known that
   the restriction of $f$ to its logarithmic
   tracts will also have finite order in the previously defined 
   sense.)
\end{proof}

\begin{remark}
 If our goal was only to prove Theorem \ref{thm:positive} and Corollary
  \ref{cor:mero}, a somewhat faster route would be possible
  (compare \cite[Chapter 3]{guenterthesis}). For example, the linear 
  head-start condition can be verified directly for functions
  of finite order, without explicitly considering the geometry of
  their tracts. Also, the bounded slope condition can be used to simplify
  the proof of Theorem \ref{thm:raytails} in this context, eliminating e.g.\
  the need for Theorem \ref{thm:ripponstallard}. We have chosen the
  current approach because it provides both additional information and
  a clear conceptual picture of the proof. 
\end{remark}

 Let us collect together some of the results obtained in this and the
  previous section for future reference.
 \begin{cor}[Linear Head-Start Conditions] 
  Let $\Hlog$ consist of all functions $F\in\Blog$ that satisfy a 
   uniform linear head-start condition and have tracts of bounded slope. 
  \begin{enumerate}
   \item The class $\Hlog$ contains all function $F\in\Blognorm$ of 
     finite order.
   \item The class $\Hlog$ is closed under composition.
   \item If $F\in\Hlog$, then there is some $K>0$ such that every point of
     $J^K(F)$ can be connected to infinity by a curve in $I(F)$. 
   \item If $F\in\Hlog$ is of disjoint type, then every component of $J(F)$
     consists of a dynamic ray together with a unique landing point. 
  \end{enumerate}
 \end{cor}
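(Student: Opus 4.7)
My plan is to assemble the corollary from the tools developed in Sections \ref{sec:head} and \ref{sec:growth}; essentially no new ingredients are required, so the proposal consists of checking which earlier result yields which part, with special attention to the uniformity of constants.

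For part (a), I would apply Theorem \ref{thm:finiteorder} to obtain that any finite-order $F\in\Blognorm$ has tracts of bounded slope and uniformly bounded wiggling with some constants $K,\mu$. The implication (\ref{item:tractshaveboundedwiggling})$\Rightarrow$(\ref{item:ssatisfieslinearheadstart}) of Proposition \ref{prop:geometryheadstart} then yields a uniform linear head-start condition with the same $K$ and some $M$, so $F\in\Hlog$. Part (b) is essentially the content of Lemma \ref{lem:composition}, once each factor is normalized: if $F_1,\dots,F_n\in\Hlog$, then the hypotheses of that lemma are satisfied, and it produces both a bounded-slope tract structure and a uniform linear head-start condition for $G_a:=\tau_a\circ F_n\circ\cdots\circ F_1$; undoing the outer translation $\tau_a$ does not affect membership in $\Hlog$.

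For part (c), I would fix $F\in\Hlog$ with uniform head-start function $\phi(t)=Kt^{+}+M$ and take the constant $K'$ produced by Proposition \ref{prop:EscapingInCs}, which depends only on $F$ and $\phi$ and is therefore independent of the external address $\s$. Given any $z\in J^{K'}(F)$ with address $\s$, Propositions \ref{prop:CsArc} and \ref{prop:EscapingInCs} together place $z$ on the unique unbounded component $A_{\s}$ of $J_{\s}$, which is a closed arc from a finite endpoint to $\infty$; Corollary \ref{cor:growthofrealparts} then shows that all of $A_{\s}$ except possibly its finite endpoint lies in $I(F)$, so the subarc of $A_{\s}$ from $z$ to $\infty$ is the required curve. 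For part (d), the disjoint-type statement of Theorem \ref{thm:absorbing} gives $X=J(F)$, where $X$ is the absorbing set whose components are closed arcs to $\infty$ with all points but possibly the finite endpoint escaping; each such component is therefore a ray tail in the sense of Definition \ref{Def:RayTails} together with a unique landing point at its finite end.

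The only real obstacle is bookkeeping: I must check that uniformity of constants survives each application of Proposition \ref{prop:geometryheadstart}, Lemma \ref{lem:composition} and Proposition \ref{prop:EscapingInCs}. In each case the relevant constants depend only on the underlying parameters $\alpha,\beta,K,M,\mu$ and on the function $\delta(\alpha,\beta,K,0)$ from Lemma \ref{lem:linearseparation}, all of which are global in the setting of $\Hlog$, so uniformity does pass through without trouble.
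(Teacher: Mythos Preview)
Your proposal is correct and follows exactly the same route as the paper's own proof, which simply cites Theorem~\ref{thm:finiteorder} with Proposition~\ref{prop:geometryheadstart} for (a), Lemma~\ref{lem:composition} for (b), Proposition~\ref{prop:EscapingInCs} for (c), and Theorem~\ref{thm:absorbing} for (d). Your version merely supplies more detail, and your remark on uniformity of constants is apt; note only that, as the paper's accompanying remark indicates, ``closed under composition'' is to be understood in the sense of Lemma~\ref{lem:composition} (i.e.\ after restriction and conjugation by $\tau_a$), so you need not argue that the translation can be undone.
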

\begin{remark}[Remark 1]
 Here closure under composition should be understood in the sense of 
  Lemma \ref{lem:composition}. I.e., given functions $F_1,\dots,F_n\in\Hlog$, 
    the
  function $F_1\circ\dots\circ F_n$ belongs to $\Blog$ 
  after a suitable restriction and conjugation with a translation;
  this map then also belongs to $\Hlog$.  
\end{remark}
\begin{remark}[Remark 2]
 It is easy to see that the class $\Hlog$ is also closed under
  \emph{quasiconformal equivalence near infinity} in the sense of
  \cite{boettcher}.
\end{remark}
\begin{proof}
 The first claim is a combination of Theorem \ref{thm:finiteorder} and 
  Proposition \ref{prop:geometryheadstart}. The second follows from 
  Lemma \ref{lem:composition}, the third from
  Proposition \ref{prop:EscapingInCs}, and the final claim
  from Theorem \ref{thm:absorbing}. 
\end{proof}

In order to apply our results to functions in $\Blog$ that are of disjoint type
 but not necessarily normalized, we need to be able to verify 
 linear head-start conditions for these functions. The following 
 lemma allows us to do this using the results we proved for 
 normalized functions.
\begin{prop}[Disjoint-type maps and linear head-start]
   \label{prop:disjointheadstart}
 Let $F:\T\to H$ be a disjoint-type map in $\Blog(\alpha,\beta)$, and let
  $R>0$ such that $\H_R\subset H$.
  Then $F$ satisfies a uniform
  linear head-start condition if and only if
  the map $\wt{F}:= F|_{F^{-1}(\H_R)}$ satisfies a uniform linear head-start
  condition.
\end{prop}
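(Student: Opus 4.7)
\emph{Forward direction.} For each tract $T$ of $F$, the set $\wt T:=F|_T^{-1}(\H_R)$ is connected (since $F|_T$ is a conformal isomorphism and $\H_R$ is connected), and the $\wt T$ are exactly the tracts of $\wt F$. If $z,w\in\cl{\wt T}$ satisfy $\wt F(z),\wt F(w)\in\cl{\wt T'}\subset\cl{T'}$, then the same pair is admissible for the head-start condition of $F$, so a uniform linear head-start condition for $F$ with constants $(K,M)$ transfers to $\wt F$ with identical constants.

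\emph{Backward direction.} Suppose $\wt F$ satisfies a uniform linear head-start condition with constants $(\wt K,\wt M)$. The translation $\wh F(z):=\wt F(z+R)-R$ lies in $\Blognorm(\alpha,\beta)$ and inherits the head-start condition with adjusted constants, so Proposition~\ref{prop:geometryheadstart} yields uniformly bounded wiggling of the tracts $\wh T=\wt T-R$, and hence, shifting back, of the tracts $\wt T$. The key step is to propagate bounded wiggling from $\wt T$ to the full tract $T$. For $z_0\in\cl{\wt T}$, the hyperbolic geodesic of $T$ from $z_0$ to $\infty$ is $F|_T^{-1}(\gamma_H)$, where $\gamma_H\subset H$ is the $H$-geodesic from $F(z_0)$ to $\infty$; the geodesic of $\wt T$ from $z_0$ to $\infty$ is $F|_T^{-1}(\gamma_{\H_R})$, where $\gamma_{\H_R}\subset\H_R$ is the horizontal ray. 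Since $H\setminus\H_R\subset\{\re\le R\}$ and $H$ is $2\pi i$-periodic, the densities $\lambda_H$ and $\lambda_{\H_R}$ are uniformly comparable on $\H_{R+C}$ for suitable $C>0$ via~\eqref{eqn:standardestimate}, so the two curves $\gamma_H,\gamma_{\H_R}$ remain at bounded hyperbolic distance inside $H$. Pulling back by the conformal isomorphism $F|_T^{-1}$ preserves hyperbolic distance; combined with the bounded-slope assumption and~\eqref{eqn:standardestimate} this yields a uniform comparison of real parts along the two geodesics. Consequently, bounded wiggling of $\wt T$ forces bounded wiggling of $T$ with possibly enlarged constants.

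It remains to deduce a uniform linear head-start condition for $F$ from bounded wiggling of $T$. This is the content of the implication (b)$\Rightarrow$(c)$\Rightarrow$(a) in the proof of Proposition~\ref{prop:geometryheadstart}, which relies only on expansivity of the logarithmic transform and the standard estimate. For our disjoint-type $F$, Lemma~\ref{lem:hyperbolicexpansion} supplies the required uniform hyperbolic expansion in place of the Euclidean expansion available in the normalized case, and the remainder of the argument is formally identical. The main technical obstacle is the geometric comparison of $\gamma_H$ and $\gamma_{\H_R}$ inside the transition strip $\{R\le\re\le R+C\}$, where $\lambda_H$ and $\lambda_{\H_R}$ differ most strongly; outside this strip the densities are uniformly comparable and the two geodesics stay close by standard hyperbolic geometry, which is what makes the transfer step go through.
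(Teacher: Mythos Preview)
The forward direction is fine and matches the paper. For the backward direction, your route is genuinely different from the paper's and has two real gaps.

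The paper does not detour through bounded wiggling at all. Its key observation is that, since $F$ is of disjoint type, every component of $\cl{\T}\setminus\H_R$ has finite hyperbolic diameter $C$ in $H$. For any $z\in\cl{T}$ with $F(z)\in\cl{T'}$ one finds $z'\in\cl{T}$ with $\re F(z')=\max(\re F(z),R)$, $F(z')\in\cl{T'}$, and $|z-z'|\le 2\pi C$ (via the standard estimate). Applying Proposition~\ref{prop:geometryheadstart}(\ref{item:strongerlinearheadstart}) to the \emph{normalized} map $G(z)=\wt F(z+R)-R$ gives a head-start inequality for such $z',w'$, which transfers back to $z,w$ with the constant increased by $4\pi C$. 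Proposition~\ref{prop:geometryheadstart} is thus only ever invoked for a normalized map.

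Your argument instead needs the implication (b)$\Rightarrow$(a) of Proposition~\ref{prop:geometryheadstart} for $F$ itself. But that implication is proved only for $F\in\Blognorm$: the proof of (b)$\Rightarrow$(c) uses Lemma~\ref{lem:linearseparation}, which rests on Lemma~\ref{lem:expansion}, and both require the image to be exactly $\H$. Saying that Lemma~\ref{lem:hyperbolicexpansion} makes the argument ``formally identical'' does not suffice; one would have to redevelop those lemmas for a general periodic domain $H$, which is essentially the difficulty Proposition~\ref{prop:disjointheadstart} is designed to bypass.

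Separately, the geodesic-comparison step is not justified. Uniform comparability of $\lambda_H$ and $\lambda_{\H_R}$ on $\H_{R+C}$ does not by itself force $\gamma_H$ and $\gamma_{\H_R}$ to remain at bounded hyperbolic distance; bi-Lipschitz equivalent metrics can have quite different geodesics. A correct version would require something like stability of quasi-geodesics in a Gromov-hyperbolic space, together with a separate treatment of the strip near $\re=R$ that you flag but do not resolve. You also only treat $z_0\in\cl{\wt T}$, whereas bounded wiggling of $T$ concerns all $z_0\in\cl{T}$.
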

\begin{proof}
  The ``only if'' direction is trivial, so suppose that
   $\wt{F}$ satisfies a uniform linear head-start condition.
   We may assume that $R$ is sufficiently large that 
   (\ref{eqn:expansion}) holds whenever $\re F(z) \geq R$;
   set $\wt{V} := F^{-1}(\H_R)$. 
   Then the map $G := \wt{F}(z+R)-R$ is an element of
   $\Blognorm(\alpha,\beta+R\alpha)$ and satisfies a uniform linear head-start
   condition.

 Define $C$ to be the maximal hyperbolic diameter, in $H$, of a
  component of $\cl{\T}\setminus \H_R$. 
  (This is a finite number because
  $F$ is of disjoint type.) Applying Lemma \ref{lem:realseparation} and
  Proposition \ref{prop:geometryheadstart} (\ref{item:strongerlinearheadstart})
  to $G$, and translating the
  results back to $\wt{F}$, we see that there are constants $K$ and $M$
  with the following property. Suppose that $z,w$ belong to a component
  $\wt{T}$ of $\wt{\T}$ and
   $F(z),F(w)$ belong to a component $T'$ of $\T$. If
   $\re w > (\re z)^+ + M$, then
    \begin{equation} \label{eqn:headstartforFtilde}
     \re F(w) > K \re F(z) + M + R + 4\pi C. 
    \end{equation}

  We
   shall show that $F$ satisfies the uniform head-start condition for
   $\phi(t) = Kt^+ + M + 4\pi C$. 
  Let $T$ and $T'$ be tracts of $F$, and suppose that
  $z,w\in T$ with $F(z), F(w)\in T'$ and $\re w > \phi(\re z)$.

  By definition of $C$, we can find a point $z'\in T$ such that
   $\re F(z') = \max(\re F(z),R)$, $F(z)\in T'$ and 
   $\dist_H(F(z'),F(z))\leq C$.
   Because $F$ is a conformal isomorphism, we have
   $\dist_T(z',z)\leq C$, and by the standard estimate
   (\ref{eqn:standardestimate}), $|z'-z| \leq 2\pi C$. There is also a
   point $w'$ with the corresponding properties for $w$. We now apply
   (\ref{eqn:headstartforFtilde}) to $z'$ and $w'$ to see that
     \begin{align*} 
      \re F(w) \geq \re F(w') - R &> 
      K\cdot \re F(z') + M + 4\pi C \\ &\geq
      K\cdot \re F(z) + M + 4\pi C = \phi(\re F(z)). \end{align*}   

 This proves the first requirement of the head-start condition. The second
  follows easily from the fact that $F$ uniformly expands the hyperbolic
  metric (Lemma \ref{lem:hyperbolicexpansion}); 
  we leave the details to the reader. 
\end{proof}

We can use the above lemma to describe the Julia sets of certain
 hyperbolic functions that are compositions of finite-order functions
 in class $\B$. As mentioned in the introduction, this
 has been proved by Bara\'nski \cite{baranskihyperbolic} when
 $f$ is of finite order.

 \begin{thm}[Disjoint-type maps] \label{thm:baranskitype}
  Let $f=f_1\circ f_2 \circ \dots \circ f_n$, where $f_i\in\B$ for all $i$,
   and all $f_i$ have finite order. Suppose that $S(f)\subset F(f)$ and that
   $F(f)$ consists only of
   the immediate basin of an attracting fixed point of $f$.
 
 Then every component of $J(f)$ is a dynamic ray together with 
   a single landing point; in particular, every point of $I(f)$ is on 
   a ray tail of $f$. 
 \end{thm}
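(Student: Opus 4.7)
The plan is to reduce to logarithmic coordinates and apply the combinatorial machinery developed earlier, exactly as was done for the proof of Theorem \ref{thm:positive}, but with the additional feature that the logarithmic transform of $f$ can be taken to be of \emph{disjoint type}. First, I would construct a suitable disjoint-type logarithmic transform $F$ of $f$. The hypotheses on $f$ (namely $S(f)\subset F(f)$, with $F(f)$ equal to the immediate basin of an attracting fixed point $\alpha$) allow one to choose a bounded Jordan domain $D\subset F(f)$ containing $S(f)\cup\{0,f(0)\}$ such that $f(\cl{D})\subset D$: begin with a small attracting neighborhood of $\alpha$ on which $f(\cl{U})\subset U$, iterate to enlarge it within $F(f)$ until it contains $S(f)\cup\{0,f(0)\}$, and smooth the boundary to a Jordan curve while preserving strict forward invariance. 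With this choice of $D$, the corresponding logarithmic transform $F:\T\to H$ satisfies $\cl{\T}\subset H$, i.e., $F$ is of disjoint type.

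Next I would verify that $F$ satisfies a uniform linear head-start condition. The strategy is: for each $f_i$, fix a \emph{normalized} logarithmic transform $F_i\in\Blognorm$. Since $f_i$ has finite order, Theorem \ref{thm:finiteorder} gives that $F_i$ has bounded slope and uniformly bounded wiggling, and Proposition \ref{prop:geometryheadstart} translates this into a uniform linear head-start condition. Lemma \ref{lem:composition} then produces a function $G_a=\tau_a\circ F_n\circ\dots\circ F_1$ in $\Blognorm$ that is a logarithmic transform of (a restriction of) $f$ and still satisfies a uniform linear head-start condition with bounded slope. Choosing $R$ large enough that $\H_R\subset H$ and comparing the restriction $\wt{F}:=F|_{F^{-1}(\H_R)}$ with $G_a$ (both are normalized logarithmic transforms of the same map $f$ on overlapping tracts, so they differ only by a translation), we deduce that $\wt{F}$ satisfies a uniform linear head-start condition. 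Proposition \ref{prop:disjointheadstart} then transfers this property to the full disjoint-type transform $F$.

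Once $F\in\Blog$ is a disjoint-type map satisfying a uniform head-start condition, the final claim of Theorem \ref{thm:absorbing} applies and yields $X=J(F)$, with every component of $J(F)$ being a closed arc to $\infty$ whose points, except possibly the finite endpoint, all escape. In particular, each external address $\s$ supports at most one such arc $J_{\s}$. Projecting via $\exp$: since $F$ is of disjoint type, $\exp(J(F))=J(f)$, and because $\exp$ is injective on each tract $T\subset\T$ and the tracts of $F$ are $2\pi i$-periodic, each component of $J(F)$ projects homeomorphically to a component of $J(f)$ consisting of a dynamic ray $\gamma:(0,\infty)\to I(f)$ together with a single finite endpoint $\gamma(0)$. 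The assertion about $I(f)$ then follows because every escaping point of $f$ lifts to an escaping point of $F$, which lies on one of these arcs by Proposition \ref{prop:EscapingInCs} (combined with the absorbing property in Theorem \ref{thm:absorbing} that applies uniformly in this disjoint-type setting).

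The main obstacle I expect is bookkeeping in the second paragraph: keeping track of the various translations and restrictions of tracts so that the head-start condition for the \emph{composition of normalized transforms} is correctly transferred to the head-start condition for the \emph{single disjoint-type transform of the composed map}. Proposition \ref{prop:disjointheadstart} is tailor-made for this transfer, but one must verify that the restriction of the disjoint-type $F$ to $F^{-1}(\H_R)$ agrees, up to an affine change of coordinates, with the composed normalized transform $G_a$ produced by Lemma \ref{lem:composition}. Everything else is an essentially routine application of the results already established.
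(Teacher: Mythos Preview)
Your proposal is correct and follows essentially the same route as the paper: construct a disjoint-type logarithmic transform $F$ from a forward-invariant Jordan domain $D$, verify a uniform linear head-start condition for a suitable restriction of $F$ via the same chain of results used for Theorem~\ref{thm:positive} (Theorem~\ref{thm:finiteorder}, Proposition~\ref{prop:geometryheadstart}, Lemma~\ref{lem:composition}), transfer this to the full disjoint-type map via Proposition~\ref{prop:disjointheadstart}, and conclude with the disjoint-type clause of Theorem~\ref{thm:absorbing}. You have in fact spelled out more of the bookkeeping than the paper does, and your identification of the one delicate point---matching the restriction $\wt{F}$ with the composed normalized transform $G_a$ up to translation---is exactly right.
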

 \begin{proof}
  The assumptions imply that there is a bounded Jordan domain $D$ such that
   $S(f)\subset D$ and $f(\cl{D})\subset D$. (This is a simple exercise.)
   Using this domain in the
   definition of a logarithmic transform $F$ of $f$, we see that $F$ is
   of disjoint type, and that $\exp(J(F))= J(f)$. As in the
   proof of Theorem \ref{thm:positive}, a suitable restriction
   of $F$ satisfies a uniform linear head-start condition.
   The claim now follows
   from Proposition \ref{prop:disjointheadstart}
   and  Theorem \ref{thm:absorbing}.
 \end{proof}

 To conclude the section, let us comment on the issue of ``random iteration'',
  where we are considering
  a sequence $\F = (F_0,F_1,F_2,\dots)$ 
  of functions, and study the corresponding ``escaping set''
  set $I(\F)=\{z\in\C:\F_n(z)\to\infty\}$, where
  $\F_n=F_n\circ F_{n-1}\circ \dots \circ F_0$.
  (Now the pairs
  $(T,T')$ will consist of a tract $T$ 
  of $F_{k}$ and a tract $T'$ of $F_{k+1}$, etc.) Our proofs carry through
  analogously in this setting. In particular, if all tracts of all
  $F_j$ have uniformly bounded wiggling
  and uniformly bounded slope, then again for every $z\in I(\F)$, there is 
  some iterate $\F_n(z)$ that can be connected to infinity by a curve in
  the escaping set $I(F_n,F_{n+1},\dots)$.

\section{Counterexamples}
\label{sec:counter}

This section is devoted to the proof of Theorem \ref{thm:counterexample};
 that is, the construction of a counterexample to the strong form 
 of Eremenko's Conjecture. As mentioned in the previous section, such
 an example will be provided by a function with a tract that has
 sufficiently large ``wiggles''. 

We begin by formulating the exact properties our counterexample should have.
 Then we construct a tract (and hence a function $F\in\Blog$) with
 the required properties. Finally, we show how to realize such a tract
 as that of an entire function $f\in\B$, using a function-theoretic principle.

To facilitate discussion in this and the next section, let us call
 an unbounded Jordan domain $T$ a \emph{tract} if the real parts of $T$ are
 unbounded from above and the translates $T+2\pi i n$ (for $n\in\Z$) have
 pairwise disjoint closures in $\C$.

\begin{thm}[No Curve To Infinity]
\label{Thm:NoCurveToInfinity}
 Let $T\subset\H$ be a tract, and let
  $F_0\colon T\to\H$ be a Riemann map, with continuous extension
  $F_0\colon \wh{T}\to\wh{\H}$ given by Carath\'eodory's Theorem. Suppose
  that the following hold:
\begin{enumerate}
\item $F_0(\infty)=\infty$;  \label{Item:Infinity}
\item $|\im z - \im z'|<H$ for some $H<2\pi$ and all $z,z'\in T$;
\label{Item:Domain}%
\item
there are countably many disjoint hyperbolic geodesics $C_k,\sep C_k\subset T$, for $k=0,1,\dots$, so that all $F_0(C_k)$ and $F_0(\sep C_k)$ are semi-circles in $\H$ centered at $0$ with radii $\rho_{k+1}$ and $\sep \rho_{k+1}$ so that  $\rho_1<\sep \rho_1<\rho_2<\dots$;
\label{Item:Geodesics}
\item
all $\rho_k+H< \sep \rho_k/2$ and all $\sep \rho_k+H<\rho_{k+1}/2$;
\label{Item:RhoEstimates}
\item
all  $C_k$ and $\sep C_{k}$ have real parts strictly between $\sep \rho_k+H$ and $\rho_{k+1}/2$;
\label{Item:RealParts}
\item
all points in the unbounded component of $T\sm \sep C_k$ have real parts greater than $\sep \rho_k$;%
\label{Item:NotFarBack}%
\item
every curve in $T$ that connects $C_k$ to $\sep C_k$ intersects the line $\{z\in\C\colon\re z=\rho_k/2\}$.
\label{Item:TurnPoints}
\end{enumerate}
Define $T_n:=T+2\pi i n$ for $n\in\Z$ and $\T:=\bigcup_n T_n$, define Riemann maps $F_n\colon T_n\to\H$ via $F_n(z):=F_0(z-2\pi in)$, 
and define a map $F\colon\T\to\H$ that coincides on $T_n$ with $F_n$ for each $n$. 

Then the set 
  $J:=J(F)=\{z\in T\colon F^{\circ k}(z)\in \T \mbox{ for all $k$}\}$ contains no curve to $\infty$.
\end{thm}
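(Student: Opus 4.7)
The plan is to argue by contradiction: suppose $\gamma\colon[0,\infty)\to J$ is a continuous injection with $\gamma(t)\to\infty$. Because $H<2\pi$, the closures $\overline{T_n}$ are pairwise disjoint in $\C$, so every connected subset of $\T$ lies in a single tract; thus $\gamma$ is contained in some $T_{m_0}$, which after translation I take to be $T$. Using that $F_0$ extends continuously to $\widehat{T}\to\widehat{\H}$ with $F_0(\infty)=\infty$, each iterate $F^{\circ n}(\gamma)$ is again a continuous curve to infinity, and by the same disjointness it is contained in a single tract $T_{m_n}$, so all points of $\gamma$ share a common external address $(m_n)$. Writing $\tilde\gamma_n:=F^{\circ n}(\gamma)-2\pi i m_n\subset T$, the curve $\tilde\gamma_n$ must, for every sufficiently large $k$, cross both hyperbolic geodesics $C_k$ and $\sep C_k$ (whose real parts tend to infinity by (\ref{Item:RealParts})), so by (\ref{Item:TurnPoints}) any sub-arc connecting a crossing of $C_k$ to one of $\sep C_k$ meets the line $\{\re z=\rho_k/2\}$. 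This gives a ``dip point'' $p_n^k\in F^{\circ n}(\gamma)$ with $\re(p_n^k-2\pi i m_n)=\rho_k/2$.

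The core of the plan is a rigidity estimate for these dip points. For $\zeta=p_0^k=\gamma(u_k)$, the point $\zeta$ lies in the wiggle region of $T$ bounded by $C_k$ and $\sep C_k$, so by (\ref{Item:Geodesics}) its image has modulus $|F(\zeta)|=|F_0(\zeta)|\in(\rho_{k+1},\sep\rho_{k+1})$, while $F(\zeta)\in T_{m_1}$ forces $|\im F(\zeta)-2\pi m_1|\le H$. The Pythagorean identity then pins $\re F(\zeta)$ into a narrow window close to $(\rho_{k+1},\sep\rho_{k+1})$; by the gap estimates (\ref{Item:RhoEstimates}) combined with the real-part ranges (\ref{Item:RealParts}), this places $F(\zeta)$ strictly between $\sep C_k$ and $C_{k+1}$ of $T_{m_1}$, i.e.\ outside every wiggle dip of $T_{m_1}$. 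Symmetrically, applying the same reasoning to the dip points of $F(\gamma)$ inside $T_{m_1}$ and pulling back under $F_0^{-1}$ forces a complementary family of points on $\gamma$ to lie in the ``high corridor'' of $T$ between $\sep C_{k-1}$ and $C_k$, never inside any wiggle dip of $T$.

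The hard part will be closing the argument. My plan is to propagate the rigidity estimate through all iterates, using the fixed external address $(m_n)$ to stack placement constraints on $\gamma$ at every level: each $F^{\circ n}(\gamma)$'s own dip points constrain $\gamma$ to visit the high corridor between $\sep C_{k-1}$ and $C_k$ at certain parameters, while the images of $\gamma$'s dip points constrain each $F^{\circ n}(\gamma)$ symmetrically. Together with the wiggle topology of $T$, these nested constraints should become incompatible with $\gamma$ being a continuous injection $[0,\infty)\to T$ limiting to $\infty$; the role of the sharp separations in (\ref{Item:RhoEstimates}) is to make the incompatibility strict at every finite level rather than only asymptotically, so that the argument should not require passage to a limit. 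Pinning down the minimal finite-stage combinatorial obstruction that rules out such a $\gamma$---and verifying that the placement constraints derived from successive iterates genuinely conflict---is the step I expect to be technically the most delicate.
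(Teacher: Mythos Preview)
Your setup (fixing an external address, noting that each iterate $F^{\circ n}(\gamma)$ is again a curve to $\infty$ in a single tract) is correct and matches the paper. Your observation that a point $\zeta$ in the wiggle region between $C_k$ and $\sep C_k$ has $|F_0(\zeta)|\in(\rho_{k+1},\sep\rho_{k+1})$ is also correct and is used in the paper. However, the conclusion you draw from it---that this forces $F(\zeta)$ to lie in the ``high corridor'' between $\sep C_k$ and $C_{k+1}$ of $T_{m_1}$---is not justified: knowing the real part (or modulus) of $F(\zeta)$ does not by itself locate it among the geodesic-bounded regions of the wiggling tract $T_{m_1}$, since those regions overlap in real part. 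More seriously, even granting that observation, your plan to ``stack placement constraints'' and hope they become ``incompatible with $\gamma$ being a continuous injection'' is not a proof; you have not identified what the contradiction actually is. You yourself flag this as ``the step I expect to be technically the most delicate,'' but it is in fact the entire content of the argument.

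What is missing is a concrete \emph{doubling} mechanism, and this is what the paper supplies. The paper works backwards rather than forwards: fix $w_0\in\gamma$, show inductively that $|w_k|<\sep\rho_{m+k}$ for some fixed $m$, and then observe that $F^{\circ k}(\gamma)$ must contain a subarc connecting the semicircles $R_{m+k+1}$ and $\sep R_{m+k+1}$ in $\H$. Pulling back by one step of $F$, this yields a subarc of $F^{\circ(k-1)}(\gamma)$ joining $C_{m+k}$ to $\sep C_{m+k}$; by condition~(\ref{Item:TurnPoints}) this arc dips to real part $\rho_{m+k}/2$, while its endpoints (on $C_{m+k}$ and $\sep C_{m+k}$) have real parts exceeding $\sep\rho_{m+k}+H$ by~(\ref{Item:RealParts}). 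A short modulus estimate (the paper's Claim~2) then shows the dip lies \emph{inside} $R_{m+k}$ and the endpoints lie \emph{outside} $\sep R_{m+k}$, so this single arc contains \emph{two} disjoint subarcs each connecting $R_{m+k}$ to $\sep R_{m+k}$. Iterating $k$ times forces $\gamma$ to contain $2^k$ disjoint subarcs joining the fixed pair $C_{m+1},\sep C_{m+1}$, which is impossible for a curve. This multiplication of crossings at a \emph{fixed} level is the missing idea; your forward ``rigidity'' constraints on individual points do not produce it.
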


\begin{figure}[hbt]
\begin{center}
\setlength{\unitlength}{1cm}
\begin{picture}(12,4)
\put(0,0){\includegraphics[viewport=000 000 800 400,width=12cm]{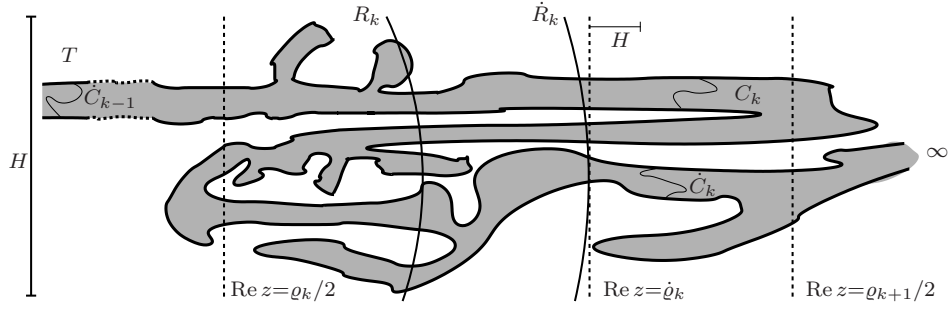}}
\put(0.8,2.65){$\scriptstyle\sep{C}_{k-1}$}
\put(9.45,2.7){$\scriptstyle C_k$}
\put(8.85,1.47){$\scriptstyle \sep{C}_{k}$}
\put(0.5,3.2){$\scriptstyle T$}
\put(-0.2,1.8){$\scriptstyle H$}
\put(12,1.95){$\scriptstyle\infty$}
\put(4.37,3.75){$\scriptstyle R_k$}
\put(6.75,3.75){$\scriptstyle \sep{R}_k$}
\put(2.75,0.1){$\scriptstyle \re z = \rho_k/2$}
\put(7.7,0.1){$\scriptstyle \re z = \sep{\rho}_k$}
\put(10.4,0.1){$\scriptstyle \re z = \rho_{k+1}/2$}
\put(7.8,3.4){$\scriptstyle H$}
\end{picture}
\end{center}
\caption{\label{Fig:CounterWiggle} A tract that satisfies the conditions of Theorem \ref{Thm:NoCurveToInfinity}. The figure is not to scale, as can be seen from the horizontal and vertical dimensions of the length $H$.}
\end{figure}

\begin{proof}
Since the $T_n$ have disjoint closures, $F$ extends continuously to $\cl{\T}$.
Let $R_k$ and $\sep R_k$ be semicircles in $\H$ centered at $0$ with radii $\rho_k$ and $\sep \rho_k$, respectively.

Every $z\in J$ has an external address $\s=T_{s_0}T_{s_1}T_{s_2}\dots$ so that 
 $F^{\circ k}(z)\in T_{s_k}$ for all $k$. Clearly, all points within any 
 connected component of $J$ have the same external address, so we may fix an 
 external address $\s$ and show that the set $J_{\s}$ (i.e., the points in $J$ with address $\s$) contains no  curve to $\infty$. We may assume that there is an orbit $(w_k)$ with
  external address $\s$ (if not, then we have nothing to show). 

For simplicity, we write $\Csep{m}{k}$ for $C_m+2\pi i s_k$ and $\Csepp{m}{k}$ for $\sep C_m+2\pi i s_k$.

\begin{claim}[Claim 1]
There is an $m\ge 0$ so that for all $k\ge 0$, $\Csepp{m+k}{k}$ separates $w_k$ from $\infty$ within $T_{s_k}$, and $|w_{k+1}| < \sep \rho_{m+k+1}$.
\end{claim}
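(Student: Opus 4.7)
The plan is to prove the claim by induction on $k$, deriving both assertions simultaneously from a single geometric observation about the conformal map $F\colon T_{s_k}\to\H$. Note that $F$ restricted to $T_{s_k}$ equals $F_0(\cdot - 2\pi i s_k)$, so it is a conformal isomorphism onto $\H$ that extends continuously to $\infty$ with $F(\infty)=\infty$ by condition~(\ref{Item:Infinity}). Moreover, $F$ sends $\Csepp{m+k}{k}$ onto $\sep R_{m+k+1}$, the semicircle of radius $\sep\rho_{m+k+1}$ in $\H$. Since this semicircle separates $\H$ into the half-disk $\{z\in\H\colon|z|<\sep\rho_{m+k+1}\}$ and the unbounded region $\{z\in\H\colon|z|>\sep\rho_{m+k+1}\}$ (the latter containing the ideal point $\infty$), and since $F$ preserves $\infty$, the bounded component of $T_{s_k}\sm\Csepp{m+k}{k}$ is mapped precisely onto the half-disk.

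Assuming the first part of the claim at level $k$, i.e.\ that $\Csepp{m+k}{k}$ separates $w_k$ from $\infty$ in $T_{s_k}$, the observation immediately gives $|w_{k+1}|=|F(w_k)|<\sep\rho_{m+k+1}$, which is the second part of the claim at level $k$. For the induction step, I will use condition~(\ref{Item:NotFarBack}): from $|w_{k+1}|<\sep\rho_{m+k+1}$ one gets $\re w_{k+1}<\sep\rho_{m+k+1}$, and (\ref{Item:NotFarBack}) (translated by $2\pi i s_{k+1}$) says the unbounded component of $T_{s_{k+1}}\sm\Csepp{m+k+1}{k+1}$ has real parts strictly greater than $\sep\rho_{m+k+1}$. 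Since $w_{k+1}\in T_{s_{k+1}}$ by the external address, it must lie in the bounded component, so $\Csepp{m+k+1}{k+1}$ separates $w_{k+1}$ from $\infty$, closing the induction.

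For the base case, I need to choose $m$ so that $\Csepp{m}{0}$ separates $w_0$ from $\infty$ in $T_{s_0}$. By the same application of condition~(\ref{Item:NotFarBack}), it suffices to choose $m$ with $\sep\rho_m>\re w_0$. Condition~(\ref{Item:RhoEstimates}) yields $\rho_{k+1}>2\sep\rho_k\ge 2\rho_k$, hence $\rho_m,\sep\rho_m\to\infty$ geometrically, so such an $m$ exists. This $m$ then serves uniformly for all subsequent $k$ by the induction above.

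The only subtlety — which is more a matter of bookkeeping than a real obstacle — is keeping track of the two different index shifts: the subscript on the tract $T_{s_k}$ advances by $1$ per iterate of $F$, and the subscript on the separating geodesic $\sep C_{m+k}$ advances by $1$ per iterate as well, so that the ``pocket'' shrinks in the $w$-plane but its $F$-image is a half-disk whose radius $\sep\rho_{m+k+1}$ still grows to infinity. The proof of the overall theorem (that no curve to $\infty$ exists) will then have to exploit this nested trapping together with condition~(\ref{Item:TurnPoints}), which forces any candidate curve to cross the line $\{\re z=\rho_{m+k}/2\}$; but that lies beyond the scope of Claim~1 itself.
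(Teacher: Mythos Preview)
Your proof is correct and follows essentially the same inductive argument as the paper: establish the base case by choosing $m$ large enough via condition~(\ref{Item:NotFarBack}), then for the inductive step use condition~(\ref{Item:Geodesics}) to conclude $|w_{k+1}|<\sep\rho_{m+k+1}$ from the separation hypothesis, and feed this back into condition~(\ref{Item:NotFarBack}) to obtain the separation at level $k+1$. Your write-up is somewhat more explicit than the paper's (spelling out why the bounded component maps to the half-disk, and why $\sep\rho_m\to\infty$), but the ideas coincide.
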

\begin{subproof}
We prove this claim by induction, based on Condition~(\ref{Item:NotFarBack}): some $\Csepp{m}{0}$ separates $w_0$ from $\infty$. For the inductive step, suppose that $\Csepp{m+k}{k}$ separates $w_k$ from $\infty$ within $T_{s_k}$. Then $\sep R_{m+k+1}$ separates $w_{k+1}$ from $\infty$ within $\H$, i.e., $\re w_{k+1}\le |w_{k+1}|<\sep \rho_{m+k+1}$ (Condition~(\ref{Item:Geodesics})). By Condition~(\ref{Item:NotFarBack}), it follows that $w_{k+1}$ is in the bounded component of $T_{s_{k+1}}\sm \Csepp{m+k+1}{k+1}$, so $\Csepp{m+k+1}{k+1}$ separates $w_{k+1}$ from $\infty$ within $T_{s_{k+1}}$, and this keeps the induction going and proves the claim.
\end{subproof}

\begin{claim}[Claim 2]
 For all $k\ge 1$, the semicircle $R_{m+k+1}$ surrounds 
  $\Csep{m+k}{k}$, $\Csepp{m+k}{k}$ and all points in $T_{s_k}$ with real 
  parts at most $\rho_{m+k+1}/2$.
\end{claim}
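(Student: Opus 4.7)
The main observation is that Claim~1, applied at index $k-1$, already bounds the orbit point $w_k$ by $|w_k|<\sep\rho_{m+k}$. Combined with Condition~(\ref{Item:Domain}), which forces every point of the tract $T_{s_k}$ to lie within imaginary distance $H$ of $w_k$, this will confine the whole truncated tract $\{z\in T_{s_k}\colon\re z\le\rho_{m+k+1}/2\}$ inside the disk bounded by $R_{m+k+1}$. Conditions~(\ref{Item:RhoEstimates}) and~(\ref{Item:RealParts}) then do the rest.

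Fix $k\ge 1$. First I would apply Claim~1 with its index shifted by one to obtain $|\im w_k|\le|w_k|<\sep\rho_{m+k}$. For any $z\in T_{s_k}$, Condition~(\ref{Item:Domain}) yields
\[
	|\im z|\le|\im w_k|+|\im z-\im w_k|<\sep\rho_{m+k}+H<\rho_{m+k+1}/2,
\]
the last inequality being Condition~(\ref{Item:RhoEstimates}). If additionally $\re z\le\rho_{m+k+1}/2$, then
\[
	|z|^2=(\re z)^2+(\im z)^2<2\bigl(\rho_{m+k+1}/2\bigr)^2<\rho_{m+k+1}^2,
\]
so $z$ lies strictly inside $R_{m+k+1}$. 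Finally, Condition~(\ref{Item:RealParts}) guarantees that $\Csep{m+k}{k}$ and $\Csepp{m+k}{k}$ have real parts strictly less than $\rho_{m+k+1}/2$, so they fall under the case just handled and are therefore surrounded by $R_{m+k+1}$ as well.

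I do not anticipate any serious obstacle: the statement really reduces to a one-line geometric computation once Claim~1 has been invoked. The only subtle point is the indexing, namely that Claim~1 must be applied at level $k-1$ so as to bound $w_k$ (rather than $w_{k+1}$), and Condition~(\ref{Item:RhoEstimates}) must be read in the form $\sep\rho_{m+k}+H<\rho_{m+k+1}/2$, so as to absorb the vertical width $H$ of the tract into the next radial scale.
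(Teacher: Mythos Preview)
Your proof is correct and follows essentially the same approach as the paper: invoke Claim~1 at index $k-1$ to bound $|\im w_k|<\sep\rho_{m+k}$, use Condition~(\ref{Item:Domain}) to bound $|\im z|<\sep\rho_{m+k}+H$, and combine with Condition~(\ref{Item:RhoEstimates}) and the real-part hypothesis. The only cosmetic difference is that the paper concludes via $|z|\le\re z+|\im z|<\rho_{m+k+1}/2+\sep\rho_{m+k}+H<\rho_{m+k+1}$, whereas you use Pythagoras after first bounding $|\im z|<\rho_{m+k+1}/2$; both are one-line computations using the same inequality from Condition~(\ref{Item:RhoEstimates}).
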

\begin{subproof} Recall that $\Csep{m+k}{k}$ and $\Csepp{m+k}{k}$ have
 real parts at most $\rho_{m+k+1}/2$ by Condition~(\ref{Item:RealParts}).

 So suppose that $z\in T_{s_k}$ has $\re z \leq \rho_{m+k+1}/2$.
  We have $|\im w_{k}| \le |w_{k}| < \sep \rho_{m+k}$ by the first claim, 
  and since $T_{s_{k}}$ contains $w_{k}$ as well as $z$
  and has height at most $H$ (Condition~(\ref{Item:Domain})),
  it follows that $|\im z| < \sep \rho _{m+k}+H$. So, by
  Condition~(\ref{Item:RhoEstimates}),  
  \[  |z| \le \re z + |\im z| < 
       \rho_{m+k+1}/2 + \sep\rho_{m+k}+H < \rho_{m+k+1}
        \qedhere\]
\end{subproof}


Now suppose there is a curve $\gamma\subset J_{\s}$ that converges to $\infty$, and suppose that $w_0$ was chosen with $w_0\in\gamma$. For every $k\ge 0$, the curve $F^{\circ k}(\gamma)$ connects $w_k$ to $\infty$ (Condition~(\ref{Item:Infinity})). The point $w_k$ is surrounded by both $R_{m+k+1}$ and $\sep R_{m+k+1}$: by the first claim, we have $|w_k|<\sep\rho_{m+k}<\rho_{m+k+1}<\sep\rho_{m+k+1}$. As a result, $F^{\circ k}(\gamma)$ must contain a subcurve $\gamma_{k}$ connecting $R_{m+k+1}$ with $\sep R_{m+k+1}$. But this implies that $F^{\circ(k-1)}(\gamma)$ contains a subcurve $\gamma_{k-1}$ connecting $\Csep{m+k}{k-1}$ with $\Csepp{m+k}{k-1}$ (Condition~\ref{Item:Geodesics}). Since $\Csep{m+k}{k-1}$ and $\Csepp{m+k}{k-1}$ have real parts greater than $\sep \rho_{m+k}+H$ by Condition~(\ref{Item:RealParts}), it follows that both endpoints of $\gamma_{k-1}$ are outside of $\sep R_{m+k}$.
But $\gamma_{k-1}$ must be contained within $T_{s_{k-1}}$,  so Condition~(\ref{Item:TurnPoints}) implies that $\gamma_{k-1}$ must contain a point $z_{k-1}\in T_{s_{k-1}}$ with real part $\rho_{m+k}/2$. Now the last claim shows that $z_{k-1}$ is surrounded by $R_{m+k}$. As a result, $\gamma_{k-1}$ must contain two disjoint subcurves that connect $R_{m+k}$ with $\sep R_{m+k}$.

Continuing the argument inductively, it follows that $\gamma$ contains $2^k$ disjoint subcurves that connect $\Csep{m+1}{0}$ with $\Csepp{m+1}{0}$. Since this is true for every $k\ge 0$, this is a contradiction.
\end{proof}

Now we give conditions under which the set $J$ not only contains no curve to 
 $\infty$, but in fact no unbounded curve at all. 
 In many cases these conditions are satisfied automatically, such as in the 
 example that we construct below (see Theorem~\ref{Thm:RealizationTracts}).

\begin{cor}[Bounded Path Components]
\label{cor:boundedpathcomponents}
Suppose that, in addition to the conditions of 
 Theorem~\ref{Thm:NoCurveToInfinity}, there are countably many disjoint  hyperbolic geodesics $\sepp{C}_k\subset T$ so that all $F_0(\sepp{ C}_k)$ are semi-circles in $\H$ centered at $0$ with radii $\sepp{\rho}_{k+1}>\sep\rho_{k+1}$ so that the bounded component of $T\sm\sepp{ C}_{k+1}$ has real parts at most 
 $\sepp{\rho}_{k+1}/2$.

Then every path component of $J$ is bounded.
\end{cor}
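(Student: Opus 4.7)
The plan is to adapt the proof of Theorem~\ref{Thm:NoCurveToInfinity} from curves to infinity to paths inside an unbounded path component. Suppose for contradiction that some path component $P\subset J$ is unbounded, and fix $w_0\in P$. Since the closures $\cl{T_n}$ are pairwise disjoint and $F$ maps paths to paths, every connected subset of $J$ has a well-defined external address, so $P\subset J_{\s}$ for a single $\s$. Write $w_k:=F^{\circ k}(w_0)$ and let $m$ be the constant from Claim~1 of the theorem's proof, so that $|w_k|<\sep\rho_{m+k+1}$ for all $k$.

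Because $P$ is path-connected and unbounded, for every $k\ge 0$ one can choose a path $\eta_k\subset P$ from $w_0$ to a point $\zeta_k\in P$ with $|F^{\circ k}(\zeta_k)|$ large enough that the inductive doubling step of the theorem (using Claims~1 and~2) applied to $\eta_k$ in place of $\gamma$ yields $2^k$ pairwise disjoint subarcs of $\eta_k$ joining $\Csep{m+1}{0}$ to $\Csepp{m+1}{0}$. This reproduces the theorem's combinatorics in the setting of a compact path. In the theorem the contradiction was then immediate, because a curve to infinity has only finitely many oscillations between two disjoint compact arcs; that step is no longer available for a compact path, and the additional geodesics $\sepp C_k$ are used to supply a substitute.

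The hypothesis that the bounded component of $T\sm\sepp C_{k+1}$ has real parts at most $\sepp\rho_{k+1}/2$ provides a third family of barriers $\sepp R_\ell$, and each subarc in the doubling is forced to pass through a narrow throat cut out by a specific $\sepp C_\ell$ and its $F$-preimages. By careful bookkeeping of how these throats partition $T$, I would show that the $2^k$ subarcs of $\eta_k$ must lie in $2^k$ distinct components of the complement in $T$ of a finite union of preimages $F^{-j}(\sepp C_\ell)$, whereas the total number of components of this complement grows only polynomially in $k$; for large $k$ this is impossible. The main obstacle is precisely this combinatorial/geometric separation argument: one needs to formulate and prove a refined turn-back condition analogous to Condition~(\ref{Item:TurnPoints}) but one level finer, so that at each doubling step the two offspring subarcs are assigned to distinct components of the partition. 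The extra geodesics $\sepp C_k$, together with the constraint on the bounded component of $T\sm\sepp C_{k+1}$, are designed precisely to furnish such a condition.
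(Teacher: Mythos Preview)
Your proposal misidentifies the role of the geodesics $\sepp C_k$ and ends up with a hand-waved ``polynomial versus exponential'' component count that is neither formulated nor proved. The paper does \emph{not} use $\sepp C_k$ to partition $T$ into components and count; it uses them to pin down a \emph{single fixed} compact subarc of $\gamma$ in which all the doubling takes place.

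Concretely, the paper's argument runs as follows. Because the bounded component of $T\sm\sepp C_{k+1}$ has real parts at most $\sepp\rho_{k+1}/2$, the reasoning of Claim~2 shows that this bounded component (shifted into $T_{s_k}$) is surrounded by the semicircle $\sepp R_{m+k+1}$. Now pick any point $z_0\in\gamma$ on $\Cseppp{m+1}{0}$; such a point exists since $\gamma$ is unbounded and $\Cseppp{m+1}{0}$ separates $w_0$ from $\infty$ in $T_{s_0}$. Then $|F(z_0)|=\sepp\rho_{m+2}$, so $F(z_0)$ lies outside the bounded component of $T_{s_1}\sm\Cseppp{m+2}{1}$, while $w_1$ lies inside it. Hence $F([w_0,z_0]_\gamma)$ crosses $\Cseppp{m+2}{1}$ at some point $F(z_1)$ with $z_1\in[w_0,z_0]_\gamma$. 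Iterating, one obtains a nested sequence $z_k\in[w_0,z_{k-1}]_\gamma\subset[w_0,z_0]_\gamma$ such that $F^{\circ k}([w_0,z_{k-1}]_\gamma)$ connects $w_k$ to $\sepp R_{m+k+1}$, and in particular crosses both $R_{m+k+1}$ and $\sep R_{m+k+1}$. Now the doubling argument of the theorem applies verbatim to the \emph{fixed} compact arc $[w_0,z_0]_\gamma$, forcing $2^k$ disjoint subarcs between $\Csep{m+1}{0}$ and $\Csepp{m+1}{0}$ for every $k$---a contradiction, since a compact arc crosses between two disjoint compacta only finitely often.

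The point you missed is that the extra hypothesis is exactly what lets you replace your $k$-dependent paths $\eta_k$ by one path $[w_0,z_0]_\gamma$ that works for all $k$. Once you see this, no new combinatorial separation argument is needed; the contradiction is the same as in the theorem.
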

\begin{proof}
We continue the proof of the previous theorem. Suppose there is an unbounded 
  curve $\gamma\subset J_{\s}$ with $w_0\in\gamma$. For every 
  $k\ge 0$ the curve $F^{\circ k}(\gamma)$ connects $w_k$ to points at 
  arbitrarily large real parts. We will show that there must be a point 
  $z_0\in\gamma$ so that for every $k$ the subcurve of $\gamma$ between $w_0$ 
  and $z_0$ contains $2^k$ disjoint subcurves that connect $\Csep{m+1}{0}$ 
  with $\Csepp{m+1}{0}$, and this is a contradiction.

Let $\sepp{ R}_k$ be semi-circles centered at $0$ with radii 
 $\sepp{\rho}_{k}$. Since $\sepp{\rho}_{k+1}>\sep\rho_{k+1}$, it follows that 
 every $\sepp{ C}_k$ is in the unbounded component of $T\sm\sep C_k$. 
 Define vertical translates $\Cseppp{m}{k}=\sepp{ C}_{m}+2\pi is_k$ in 
 analogy to the $\Csep{m}{k}$ and $\Csepp{m}{k}$. 
 As in the second claim in the proof above, it follows that the bounded 
 component of $T_{\s_k}\setminus \sepp{ C}^k_{m+k+1}$ 
 is surrounded by $\sepp{ R}_{m+k+1}$. 

By the first claim in the proof above, $\Csepp{m}{0}$ separates $w_0$ from $\infty$ within $T_{s_0}$, so $\Cseppp{m}{0}$ and also $\Cseppp{m+1}{0}$ must do the same. Let $z_0$ be a point in the intersection of $\gamma$ with $\Cseppp{m+1}{0}$ and denote by $[w_0,z_0]_\gamma$ the subcurve of $\gamma$ connecting $w_0$ with $z_0$. 
Then $F([w_0,z_0]_\gamma)$ connects $w_1$ with $F(z_0)\in \sepp{ R}_{m+2}$.
So $F(z_0)$ belongs to the unbounded component of  
 $T_{s_1}\sm \Cseppp{m+2}{1}$, and 
 there is thus a point $z_1\in[w_0,z_0]_\gamma$ with $F(z_1)\in\Cseppp{m+2}{1}$, and $F^{\circ 2}([w_0,z_1]_\gamma)$ connects $w_2$ with $\sepp{ R}_{m+3}$. 
\hide{
Similarly, there is a point $z_2\in[w_0,z_1]_\gamma$ with $F^{\circ 2}(z_2)\in\Cseppp{m+3}{2}$, so that $F^{\circ 3}([w_0,z_2]_\gamma)$ connects $w_2$ with $\sepp{ R}_{m+4}$ and so on:
} 
By induction, for any $k\ge 0$, the curve $F^{\circ k}([w_0,z_{k-1}]_\gamma)$ connects $w_k$ with $\sepp{ R}_{m+k+1}$ and hence it connects $R_{m+k+1}$ with $\sep R_{m+k+1}$.

The same arguments from the proof of the theorem now show that in fact $[w_0,z_k]_\gamma\subset[w_0,z_0]_\gamma$ must contain $2^k$ subcurves connecting $\Csep{m+1}{0}$ with $\Csepp{m+1}{0}$ for every $k\ge 0$, and this is the desired contradiction.
\end{proof}

\begin{remark} 
 We stated the results in the form above in order to minimize the order 
  of growth of the resulting entire function, and to show that the entire 
  functions we construct can be rather close to finite order; 
  see Section~\ref{sec:properties}. If we were only interested in the 
  non-existence of unbounded path components in $I$, we could 
  have formulated  conditions that are somewhat simpler than those in the 
  preceding theorem and its corollary. For instance, the necessity for 
  introducing a third geodesic $\sepp{ C}_k$ would have been removed if 
  we had placed $C_k$ at 
  real 
  parts at most $\rho_k/2$, and required that 
  the entire bounded component of 
  $T\sm C_k$ has real parts less than $\rho_k/2$ 
  (the image of 
   any curve in $T$ connecting $\Csepp{m+k}{k}$ with $\Csep{m+k+2}{k}$ would 
   then connect $\Csepp{m+k+1}{k+1}$ and $\Csep{m+k+3}{k+1}$; this keeps 
   the induction going as before.) 
\end{remark}

 \begin{thm}[Tract with Bounded Path Components]
  \label{Thm:RealizationTracts}
   There exist a tract $T$ with $\cl{T}\subset\H$ and a conformal isomorphism
    $F_0:T\to\H$ fixing $\infty$ that satisfies the conditions of
    Theorem \ref{Thm:NoCurveToInfinity} and Corollary~\ref{cor:boundedpathcomponents}, so every path component of $J$ is bounded. 
    
    In fact, $T$ and $F_0$ can be chosen so as to
    satisfy the following conditions 
    for an arbitrary $M\in(1,1.75)$ (with the same notation as in Theorem 
     \ref{Thm:NoCurveToInfinity}):
  \begin{enumerate}
   \item[(\ref{Item:RhoEstimates}')]
    $\rho_k^{M}<\sep{\rho}_k$ and
    $\sep{\rho}_k^{M}<\rho_{k+1}$;
   \item[(\ref{Item:RealParts}')] 
     the geodesics $C_k$ and $\sep{C}_k$ have real parts strictly
      between $\sep{\rho}_k^{M}$ and $\rho_{k+1}/3$;
   \item[(\ref{Item:NotFarBack}')] 
     all points in the unbounded component of $T\sm \sep{C}_k$ have
      real parts greater than $\sep{\rho}_k^{M}$;
   \item[(\ref{Item:TurnPoints}')] every curve in $T$ that connects $C_k$ to $\sep{C}_k$
      intersects the line $\{z\in\C:\re(z)=\rho_k^{1/M} \}$;
      \item[(h')] the geodesics $\sepp{ C}_{k+1}$ from Corollary~\ref{cor:boundedpathcomponents} have the property that the bounded component of $T\sm\sepp{ C}_{k+1}$ has real parts at most $(\sepp{\rho}_{k+1})^{1/M}$.
  \end{enumerate}
 \end{thm}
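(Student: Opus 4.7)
My plan is to construct $T$ by hand as a thin Jordan domain in $\H$ whose shape zigzags in real parts in a prescribed pattern, with the Riemann map $F_0\colon T\to\H$ then analyzed via standard hyperbolic-metric estimates. The first step is to choose the radii $\rho_k,\sep{\rho}_k,\sepp{\rho}_k$ to grow iterated-exponentially fast in $k$ --- substantially faster than the polynomial growth demanded by (a'). This extra growth is forced by geometry: in a simply connected domain of vertical extent at most $H<2\pi$, the hyperbolic density on any interior axis is at least $\pi/H>1/2$, so the Euclidean distance from a base point to the geodesic $C_k$ (whose image has modulus $\rho_{k+1}$) is at most about $2\log\rho_{k+1}$. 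In order that $\re C_k>\sep{\rho}_k^M$ as required by (\ref{Item:RealParts}'), we need $\sep{\rho}_k^M\lesssim 2\log\rho_{k+1}$, which is easily arranged once $\rho_k$ grows sufficiently fast.

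With the radii fixed, I would construct $T$ as the thickening of a piecewise-linear central axis $\gamma\subset\H$. The axis is designed to zigzag: it extends forward to $\re z\approx\rho_{k+1}/3$, turns through a sharp U-bend back to $\re z\approx\rho_k^{1/M}$, extends forward again, and so on, with an additional intermediate dip between each $\sep{C}_k$ and $C_{k+1}$ to house the extra geodesic $\sepp{C}_k$ required by Corollary~\ref{cor:boundedpathcomponents}. Successive horizontal runs of $\gamma$ live at distinct imaginary levels $y_k$ increasing to some $y_\infty<2\pi$, joined by narrow U-turn corridors at the right ends; the levels and widths are chosen so the total vertical extent of $T$ stays below $H<2\pi$ and $\cl{T}\subset\H$. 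The widths of the horizontal sub-strips are tuned so that the cumulative hyperbolic length along $\gamma$ from a fixed base point to the $k$-th transverse cross-section equals exactly $\log\rho_{k+1}$ (and similarly $\log\sep{\rho}_{k+1}$ and $\log\sepp{\rho}_{k+1}$). The Riemann map $F_0\colon T\to\H$ is then the unique one sending $\infty$ to $\infty$ and $\gamma$ to the positive real axis; by symmetry, the transverse cross-sections at the prescribed arc-lengths are precisely the preimages $C_k,\sep{C}_k,\sepp{C}_k$ of the semicircles.

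Verification of each condition then reduces to unpacking the construction: (\ref{Item:Infinity})--(\ref{Item:Geodesics}) are the normalization of $F_0$ and the placement of the geodesics; (\ref{Item:Domain}) is the vertical bound; (\ref{Item:RhoEstimates}) and (a') follow from the growth of the $\rho_k$. The real-part statements (\ref{Item:RealParts}'), (\ref{Item:NotFarBack}'), and (h') hold because the construction confines every point in each specified sub-region of $T$ to the required range of real parts. Condition (\ref{Item:TurnPoints}') follows because any curve in $T$ from $C_k$ to $\sep{C}_k$ must traverse the U-turn through $\re z=\rho_k^{1/M}$.

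The main technical obstacle is to show that the hyperbolic cross-sections truly sit at the intended Euclidean real parts, rather than slipping into a neighboring horizontal run. Because the tract packs infinitely many wiggles into a bounded vertical band, small width mismatches could propagate and push the geodesics off-target. The standard estimate (\ref{eqn:standardestimate}) and the Koebe distortion theorem give the quantitative control needed --- roughly, the sub-strip widths must be chosen so that hyperbolic length along $\gamma$ accurately tracks arc length in each run, and the U-turn bottlenecks must be sharp enough to contribute only a small and controllable amount of hyperbolic distance. The allowed range $M\in(1,1.75)$ provides the slack needed to balance these requirements.
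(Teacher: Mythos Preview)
Your overall plan---a zigzagging strip with the Riemann map analyzed via hyperbolic-metric estimates---is the paper's plan too, but the sentence ``the Riemann map $F_0$ is the unique one sending $\infty$ to $\infty$ and $\gamma$ to the positive real axis; by symmetry, the transverse cross-sections at the prescribed arc-lengths are precisely the preimages $C_k,\sep C_k,\sepp C_k$'' hides a genuine gap. A tract assembled from straight runs at varying heights joined by U-bends carries no global anticonformal involution fixing the axis $\gamma$ (Schwarz reflection across a curve is anticonformal only for lines and circles), so there is no reason for $F_0(\gamma)$ to lie on $\R^+$, and hence no way to read $|F_0|$ off from hyperbolic arc-length along $\gamma$. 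Your scheme of prescribing the $\rho_k$ first and then tuning strip-widths to hit them exactly rests on precisely this identification.

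The paper runs the logic in the opposite direction and thereby sidesteps the issue. The tract is built from tubes of \emph{fixed} unit width at a \emph{fixed} set of three imaginary levels (so $H=5$); successive wiggles occupy disjoint, exponentially separated ranges of real parts, so no accumulation of levels $y_k\to y_\infty$ is needed. One then \emph{defines} $\rho_{k+1}:=|F_0(P_k)|$ and $\sep\rho_{k+1}:=|F_0(\sep P_k)|$ at explicit marked points $P_k,\sep P_k$ on the axis and proves two-sided bounds such as $\exp(\sep\xi_k)<\rho_{k+1}<\exp((4\pi/3)\,\sep\xi_k)$ directly from the standard estimate \eqref{eqn:standardestimate}. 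The inequality $\sep\rho_{k+1}>\rho_{k+1}^{1.75}$---which is where the restriction $M<1.75$ actually enters---comes from the Gr\"otzsch modulus inequality applied to the half-annulus between $R_{k+1}$ and $\sep R_{k+1}$, not from anything about $M$ providing ``slack''. Finally, the placement of the geodesics $C_k,\sep C_k$ near $P_k,\sep P_k$ (your ``main technical obstacle'') is handled not by symmetry or Koebe distortion but by a dedicated elementary lemma (Lemma~\ref{Lem:GeometryGeodesic}) about geodesics in domains containing a long rectangle.
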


\begin{remark}
The modified conditions as written in this theorem are needed in order to show 
 that this tract is ``approximately'' realized by an entire function: 
 they
 are adapted to the quality of the approximation 
 that we get later in this section. 
\end{remark}

\begin{proof}


Our domain $T$ will be a countable union of long horizontal tubes of unit thickness, together with countably many vertical tubes and countably many turns made of quarter and half annuli, all of unit thickness as well (see Figure~\ref{FigWiggleConstruction}). The domain $T$ terminates at the far left with a semidisk at center $P$. The lengths of the various tubes are labelled as in Figure~\ref{FigWiggleConstructionDetails}.

\begin{figure}
\begin{center}
\subfigure[\label{FigWiggleConstruction}The tract $T$.]{%
\setlength{\unitlength}{1cm}
\begin{picture}(13,4)
\put(-2,0){\includegraphics[clip,width=160mm]{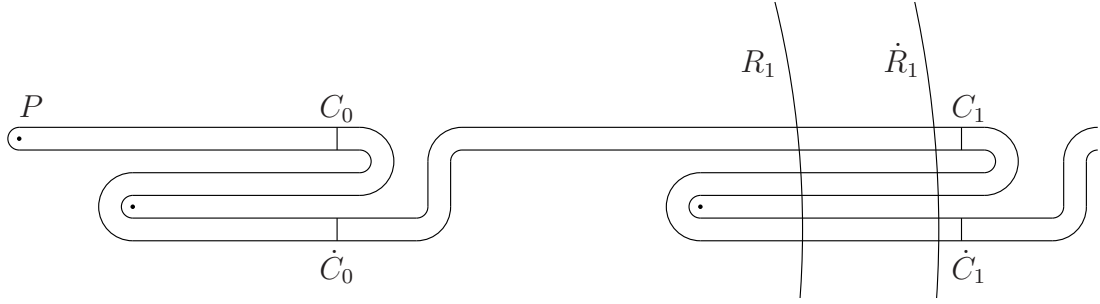}}
\put(-0.5,2.4){$P$}
\put(3.5,0.25){$\sep C_0$}
\put(3.5,2.4){$C_0$}
\put(9.1,3){$R_1$}
\put(11.0,3){$\sep R_1$}
\put(11.9,2.4){$C_1$}
\put(11.9,0.25){$\sep C_1$}
\end{picture}}
\end{center} 
\begin{center}
\subfigure[\label{FigWiggleConstructionDetails}Length scales in the construction.]{%
\setlength{\unitlength}{1cm}
\begin{picture}(12,11)
\put(0,0){\includegraphics[width=120mm]{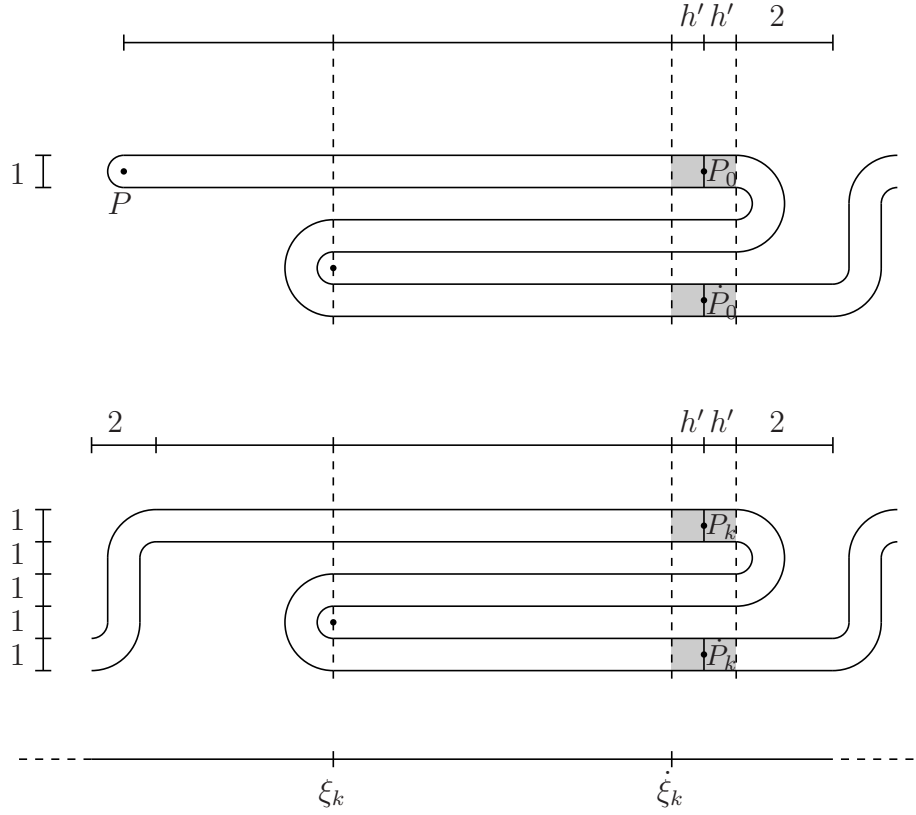}}
\put(4.3,0.2){$\xi_k$}
\put(8.8,0.2){$\sep \xi_k$}
\put(9.45,2){\small $\sep P_k$}
\put(9.45,3.74){\small $P_k$}
\put(0.2,2.0){$1$}
\put(0.2,2.45){$1$}
\put(0.2,2.9){$1$}
\put(0.2,3.35){$1$}
\put(0.2,3.8){$1$}
\put(1.5,5.1){$2$}
\put(9.1,5.1){$h'$}
\put(9.5,5.1){$h'$}
\put(10.3,5.1){$2$}
\put(1.5,8){$P$}
\put(0.2,8.4){$1$}
\put(9.45,6.67){$\sep P_0$}
\put(9.45,8.44){$P_0$}
\put(9.1,10.5){$h'$}
\put(9.5,10.5){$h'$}
\put(10.3,10.5){$2$}
\end{picture}}
\end{center}
\caption{Construction of an example for Theorem~\ref{Thm:RealizationTracts}.
 The length $h'$ is indepednent of $k$. In (b), the boxes
  $Q_k$ and $\sep Q_k$ are shaded.}
\end{figure}

More precisely, our tract $T$ is specified by the length $h'\geq 1$
  (fixed below) and sequences
 $(\xi_k)_{k\geq 0}$ and $(\sep\xi_k )_{k\geq 0}$, with 
 $\xi_0>2$ and $\xi_k < \sep\xi_k < \xi_{k+1}- 4 - 2h'$
 for all $k$. Let us set $P := 1$, $P_k := \sep\xi_k+h'$ and
 $\sep P_k := P_k - 4i$. We define a curve 
   \[ \Gamma=\bigcup_{k\geq 0}\gamma_k \cup \sep\gamma_k, \]
 where $\gamma_0$ is the straight line segment $[P,P_0]$ and 
   \begin{align*}
     \sep\gamma_k = &[P_k,P_k+h']\, \cup\,
          \{P_k + h'-i+e^{2\pi i \theta}:|\theta|<\pi/2\}\, \cup \\
             &[P_k + h' - 2i , \xi_k - 2i ]\, \cup\,
          \{ \xi_k - 3i + e^{2\pi i\theta}: |\theta-\pi|<\pi/2 \}\, \cup \\
            &[\xi_k - 4i , \sep P_k ]; \\
     \gamma_{k+1} = &[ \sep P_k , \sep P_k + h' + 2  ]\, \cup
          \{ \sep P_k + h' + 2 + i + e^{2\pi i \theta} :
             \theta\in (-\pi/2,0)\}\, \cup \\
            &[\sep \xi_k + 2h' + 3 - 2i ,
               \sep \xi_k + 2h' + 3 - i ]\, \cup\, \\
          &\{ \sep \xi_k + 2h' + 4 - i + e^{2\pi i \theta} :
                  \theta\in (\pi/2,\pi) \}\, \cup \\
               &[\sep \xi_k + 2h' + 4, P_{k+1} ]. 
    \end{align*}
  The tract $T$ then consists of all points that have distance at most 
   $1/2$ from $\gamma$. The map $F_0$ is chosen such that
   $F_0(1)=1$ and $F_0(\infty)=\infty$; 
   this detemines $F_0$ completely.

Note that $T$ and $F_0$ 
  (regardless of the choices of $\xi_k$, $\sep\xi_k$ and $h'$)
  satisfy conditions 
  (\ref{Item:Infinity}) and (\ref{Item:Domain}), 
  where $H=5<2\pi$. 
  We set $\rho_{k+1}:=|F_0(P_k)|$ and 
   $\sep \rho_{k+1}:=|F_0(\sep{P}_k)|$. 
  Let $R_{k+1}$ and $\sep R_{k+1}$ be the semicircles around $0$ with radii 
  $\rho_{k+1}$ and $\sep \rho_{k+1}$ and let $C_k:=F_0^{-1}(R_{k+1})$ and 
  $\sep C_k:=F_0^{-1}(\sep R_{k+1})$. 

Then $C_k$ and $\sep C_k$ are hyperbolic geodesics of $T$. 
 If $h'$ is sufficiently large
  -- in fact, $h':=2$ is sufficient, see Lemma 
  \ref{Lem:GeometryGeodesic} in the appendix -- 
   then $C_k$ and $\sep C_k$ will be contained
 in the boxes
 $Q_k:=\{z\in\C\colon \re(z)\in (x_k^M,x_k^M+2h'), |\im z|<1/2\}$ and 
 $\sep{Q}_k:=Q_k-4hi$, and they connect the upper with the lower boundaries of 
 their boxes. 
 In particular,
 $\sep C_k$ separates $C_k$ from $C_{k+1}$, so 
 condition~(\ref{Item:Geodesics}) is also satisfied.

We now define the sequences $\xi_k$ and $\sep\xi_k$. Begin by choosing
   $\xi_0 > 2$ sufficiently large (see below) and setting
   $\sep\xi_0 := \xi_0^{12M^2}$. We then proceed inductively by setting
\[
\xi_{k+1}:=\exp\left(\sep \xi_k/M\right)
\qquad\mbox{ and }\qquad
\sep \xi_{k+1}:=\exp\left(12M\sep \xi_k\right)= \xi_{k+1}^{12M^2}
\,\,.
\]

To see that these indeed give rise to a well-defined tract $T$
 as above, we need to verify that 
  \begin{equation}  \label{eqn:intermediatetubes}
   \xi_{k+1} = \exp\left(\sep\xi_k/M\right) > \sep\xi_k + 4 + 2h'.
  \end{equation}
If $\xi_0$ --- and hence all $\xi_k$ --- was chosen sufficiently large,
 then this inequality will certainly hold. 
 We will use other, similar, elementary inequalities below 
 that may hold only if $\xi_0$ is  
 sufficiently large. We use the symbol
 ``$\star$'' to mark such inequalities (e.g.\
 ``$\exp(\sep\xi_k/M)\stackrel{\star}{>} \sep\xi_k + 4+2h'$'').

 It is easy to see that the remaining conditions from
 Theorem \ref{Thm:NoCurveToInfinity} and 
 Corollary~\ref{cor:boundedpathcomponents}
  follow from the modified ones
  in the statement of the theorem, provided that $\xi_0$ was chosen
  sufficiently large. So it remains to verify 
  (\ref{Item:RhoEstimates}')
   to (h'); 
  we begin by estimating $\rho_{k+1}$ and $\sep\rho_{k+1}$ for $k\geq 0$. 
  We claim that
  \begin{align}
    \xi_{k+1}^M = \exp(\sep \xi_k )<
       \rho_{k+1}<&\exp\left((4\pi/3)\,\sep \xi_k\right)\quad
      \text{and} \label{eqn:rhokestimate} \\
     \sep\rho_{k+1} <& \exp(12\sep \xi_k) = \sep \xi_{k+1}^{1/M}.
            \label{eqn:seprhokestimate}  
  \end{align}

  We prove the inequalities (\ref{eqn:rhokestimate}) and 
   (\ref{eqn:seprhokestimate}) using the hyperbolic metric in the domain $T$. 
   Indeed, we have $\log\rho_{k+1}=\dist_{\H}(P,R_{k+1})=\dist_T(P,C_k)$, and
   similarly for $\sep\rho_{k+1}$. Hence it suffices to estimate the
   hyperbolic distance between $P$ and $C_k$, which we can easily do
   using the standard estimate (\ref{eqn:standardestimate}). 

  Let $\gamma$ be the piece of $\Gamma$ that connects $P$ to
   $P_k$; i.e.\
    \[ \gamma=\bigcup_{j\leq k}\gamma_j\,\cup\,\bigcup_{j<k}\sep\gamma_j. \]
   If $k\geq 1$, we clearly have
    \[ \ell(\gamma) < \sep\xi_k + h' + 2(\sep\xi_{k-1}+ 2h') + 
           3k\pi\stackrel{\star}{<} \sep\xi_k + 3\sep\xi_{k-1}
           = \sep\xi_k + \log\sep\xi_k)/4M 
           \stackrel{\star}{<} (\pi/3)\sep\xi_k. \]
   For $k=0$, we also have $\ell(\gamma)=\sep\xi_0 - 1 + h'
      \stackrel{\star}{<} (\pi/3)\sep\xi_0$. So  
    \[ \log\rho_{k+1} = \dist_T(P,C_k) \leq \ell_T(\gamma) \leq
         4\ell(\gamma) < 
         (4\pi/3) \sep \xi_k. \]

   The upper bound for $\sep\rho_{k+1}$ is proved analogously.
    Let $\sep \gamma$ be the piece of $\Gamma$ connecting $P$ to
    $\sep P_k$. If $k\geq 1$, then
   \[
       \ell(\sep \gamma)  < 
       3(\sep\xi_k+2h') + 3(k+1)\pi -
        (\xi_k - \sep \xi_{k-1} - 2h').  \]
  Note that
    \[ \xi_k = \exp(\sep\xi_{k-1}/M) \stackrel{\star}{>}
       2\sep \xi_{k-1} +8h' \stackrel{\star}{>} 
       \sep \xi_{k-1} + 8h' + 3(k+1)\pi, \]
   so we have $\ell(\sep\gamma) < 3\sep\xi_k$. If $k=0$, also 
    \[ \ell(\sep\gamma) < 3(\sep \xi_0 + 2h') + \pi - 2\xi_0 
           \stackrel{\star}{<} 3\sep\xi_0. \]
  Hence $\log\sep\rho_{k+1} \leq 4\ell(\sep\gamma)< 12\sep\xi_k$.
 
 To prove the
    lower bound for $\rho_{k+1}$, note that \emph{any} curve
    $\alpha$ connecting $P$ to $C_k$ must have 
    $\ell(\alpha) \geq \sep \xi_k + h' - 1 \geq \sep \xi_k$, and
    every point of $\alpha$ has distance at most $1/2$ from $\partial T$.
    Therefore
  \[ \log \rho_{k+1} \geq \inf_{\alpha} \ell_T(\alpha) \geq
     \inf_{\alpha} \ell(\alpha) \geq \sep \xi_k, \]
   as claimed.

Now we show that $\rho_{k+1}$ and $\sep\rho_{k+1}$ satisfy condition 
  (\ref{Item:RhoEstimates}'). 
  The second inequality follows from (\ref{eqn:intermediatetubes}),
   (\ref{eqn:rhokestimate}) and (\ref{eqn:seprhokestimate}):
   \[ \sep \rho_{k}^M < \sep \xi_k <  \xi_{k+1} < \rho_{k+1}^{1/M} < \rho_{k+1}. \] 
 To prove the first inequality, note that 
  the subdomain of $T$ bounded by $C_k$ and 
  $\sep C_k$ maps under $F_0$ conformally onto the semi-annulus in $\H$ between 
  the semicircles $R_{k+1}$ and $\sep R_{k+1}$. 
  So the moduli are equal, and we see by the Gr\"otzsch inequality that
\[
\frac{1}{\pi} \log(\sep\rho_{k+1}/\rho_{k+1}) > 
   2\left(\sep \xi_k - \xi_k \right)
  = 2\left(\sep \xi_k - \sep \xi_k^{1/(12M^2)} \right) 
    \stackrel{\star}{>} \sep \xi_k 
\]
and so, using (\ref{eqn:rhokestimate}),
\[
\sep\rho_{k+1}>\rho_{k+1}\exp\left(\pi \sep \xi_k \right) 
= \rho_{k+1} \left(\exp\left( (4\pi/3)\sep \xi_k \right)\right)^{3/4}
> \rho_{k+1}^{1.75}
> \rho_{k+1}^M
\,\,.
\]

The construction of 
  $C_{k+1}$ and $\sep C_{k+1}$ is such that their real parts are at least 
   $\sep \xi_{k+1}$, which is larger than  $\sep \rho_{k+1}^M$
     by
   (\ref{eqn:seprhokestimate}), and at most 
  \[ 
  \sep \xi_{k+1} + 2h' =
     (\log \xi_{k+2})/M + 2h'< \stackrel{\star}{<} \xi_{k+2}/3 <
     \rho_{k+2}/3 \]
  (using (\ref{eqn:rhokestimate})),  
   so condition~(\ref{Item:RealParts}') is satisfied.

Condition~(\ref{Item:NotFarBack}') is obvious: the construction ensures that 
  all points in the unbounded component of $T\sm \sep C_{k+1}$ have real parts 
  at least $\sep \xi_{k+1}$, which is
   greater than $\sep \rho_{k+1}^M$ by (\ref{eqn:seprhokestimate}).

Furthermore, every curve in $T$ that connects $C_{k+1}$ with $\sep C_{k+1}$ must 
  reach real parts less than $\xi_k < \rho_k^{1/M}$, and this is
  condition~(\ref{Item:TurnPoints}').

To conclude, we show that (h') is satisfied, so that 
 Corollary~\ref{cor:boundedpathcomponents} applies and 
 all path components of $J$ are bounded.
 We define $\sepp P_k := M(12M+1)\sep\xi_k$. Since
  $M(12M+1)\sep\xi_k \stackrel{\star}{<} \exp(\sep\xi_k/M)=\xi_{k+1}$, 
  we have $P_k\in T$, so we can set 
  $\sepp\rho_k := |F_0(\sepp P_k)|$. Let 
 $\sepp{ R}_k$ be the semicircles in $\H$ centered at $0$ with radii 
 $\sepp{\rho}_k$, and let $\sepp{ C}_k\ni\sepp P_k$ be the $F_0$-preimage of 
 $\sepp{ R}_{k+1}$ within $T$. Then, using 
 Lemma~\ref{Lem:GeometryGeodesic} as above,
 all points in the bounded component of $T\setminus \sepp C_{k+1}$ have
 real parts at most $\re \sepp P_{k+1} + h'$. 

 We can again use the hyperbolic metric to estimate
  $\log\sepp \rho_{k+1} = \dist_T(P,\sepp C_k) > \re \sepp P_k$. Hence
  \begin{align*}
    \sepp \rho_{k+1}^{1/M} > \exp((12M+1)\sep\xi_{k}) &=
     \exp(\sep\xi_k)\cdot \sep \xi_{k+1}  \\ &\stackrel{\star}{>} 
      M(12M+1)\sep\xi_{k+1} + h'
      =
      \re \sepp P_{k+1} + h'. \qedhere \end{align*}
\end{proof}

 In order to complete the proof of Theorem \ref{thm:counterexample}, we
  need to show that there is an entire function that suitably
  approximates
  the previously constructed map. To this end, we will use the
  following fact on the existence of entire functions with a
  prescribed tract; a proof can be found in the next section.
  Let us introduce the following notation: if $F:T\to\H$ is a conformal
  isomorphism, then a geodesic of $T$ that is mapped by $F$ to a semicircle
  centered at $0$ is called a \emph{vertical geodesic} (of $F$).

  \begin{prop}[Approximation by entire functions]
   \label{prop:approximation2}
   Let $T$ be a tract, and let 
    $F:T\to\H$ be a conformal isomorphism fixing $\infty$. Let $\theta>1$.

Then there is an entire function $g\in\B$ with 
 $S(f)\subset B_1(0)$ 
 and a single tract 
  $W = g^{-1}(\{|z|>1\})$, 
  and a $2\pi i$-periodic
  logarithmic transform $G:\log W \to \H$ 
  of $g$ with the following properties:
   \begin{enumerate}
   \item
   $\log W$ has a component $\wt T$ satisfying $\wt T\subset T$;
    \item the vertical geodesics of $G$ have uniformly bounded
     diameters;
    \item 
      $\displaystyle{|F(z)| \leq |G(z)| \leq |F(z)|^{\theta}}$ 
     when $z\in\wt{T}$ with $\re z$ sufficiently large.
         \label{item:sizeofG}
  \end{enumerate}
 \end{prop}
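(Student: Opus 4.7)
My plan is to construct $g$ via a Cauchy-integral representation, following the method attributed in the acknowledgements to Bergweiler and Eremenko.

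Setup. I will first pass from logarithmic coordinates to $\C^*$. Composing $F$ with the exponential, $\psi := \exp\circ F : T \to \C\setminus\cl{\D}$ is a conformal isomorphism onto the exterior of the closed unit disk. Since $T$ is a tract, $\exp|_T$ is injective, and setting $V := \exp(T)\subset\C$ yields a conformal isomorphism $\hat\psi: V\to\C\setminus\cl{\D}$ via $\hat\psi(e^z)=\psi(z)$. Fix $\alpha\in(1,\theta)$. The proposition will follow once I produce an entire function $g$ whose only tract, a slight thinning $W\subset V$ of $V$, satisfies $g:W\to\{|w|>1\}$ conformally with $g(z)=\hat\psi(z)^\alpha(1+o(1))$ as $|z|\to\infty$ in $W$, and $|g(z)|<1$ for $z$ away from $V$.

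Construction. The function $g$ will be defined by
\[
 g(z) \;=\; \frac{1}{2\pi i}\int_\Gamma \frac{\Phi(\zeta)}{\zeta-z}\,d\zeta,
\]
with $\Gamma$ an unbounded curve running just outside $\partial V$ and $\Phi$ a holomorphic density built from $\hat\psi^\alpha$. I will calibrate $\Gamma$ and $\Phi$ so that (i) the integral converges absolutely on $\C$, making $g$ entire; (ii) for $z$ deep inside $V$, deforming $\Gamma$ across $z$ and invoking the residue theorem recovers $g(z)\approx\hat\psi(z)^\alpha$ with small multiplicative error; and (iii) for $z$ outside a thickened version of $V$, direct estimation of the integrand gives $|g(z)|<1$. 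The exponent $\alpha>1$ plays a crucial role: it provides additional decay along $\Gamma$ needed to make the integral converge, while the slack between $\alpha$ and $\theta$ absorbs the multiplicative error in (ii).

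Once (i)--(iii) are in hand, the rest is essentially formal. By (ii) and (iii) the set $W := g^{-1}(\{|w|>1\})$ is a single tract slightly thinner than $V$; conformality of $g:W\to\{|w|>1\}$ then forces $S(g)\subset\cl{\D}$, so $g\in\B$ with $S(g)\subset B_1(0)$. The component $\wt{T}$ of $\log W$ meeting $T$ then satisfies $\wt{T}\subset T$ by construction. Taking logarithms in the approximation yields $G(z)=\alpha F(z)+o(1)$ on $\wt{T}$, whence $|F(z)|\le|G(z)|\le|F(z)|^\theta$ for $\re z$ sufficiently large. The uniform bound on the diameters of vertical geodesics of $G$ comes from the Koebe distortion theorem applied to the conformal map $G:\wt{T}\to\H$, together with the fact that $\wt{T}$ has imaginary extent at most $2\pi$. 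The main obstacle will be the calibration of $\Gamma$ and $\Phi$ so that (i)--(iii) hold simultaneously with uniform enough constants to isolate a single tract; this requires careful distortion estimates on $\hat\psi$ in a neighborhood of $\partial V$, exploiting both the assumption $F(\infty)=\infty$ and the specific geometry of $T$.
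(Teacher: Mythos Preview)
Your overall strategy---Cauchy integral along a curve near $\partial V$, residue theorem to recover the target function inside---is exactly the right one, and it is what the paper does. But your specific choice of density has a genuine gap that prevents the integral from converging.

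You propose $\Phi$ built from $\hat\psi^{\alpha}$, where $\hat\psi=\exp\circ\Psi$ and $\Psi:V\to\H$ is the conformal isomorphism with $\Psi\circ\exp=F$. You then assert that ``the exponent $\alpha>1$ \dots\ provides additional decay along $\Gamma$ needed to make the integral converge.'' This is not true. On $\partial V$ one has $\Psi(\zeta)\in i\R$, hence $|\hat\psi(\zeta)|=\exp(\re\Psi(\zeta))=1$, and therefore $|\hat\psi(\zeta)^{\alpha}|=1$ for every $\alpha$. Raising $\hat\psi$ to a power merely rescales $\Psi$ linearly; it does not change the argument of $\Psi$, so along the boundary $\re(\alpha\Psi)$ is still identically zero and there is no decay whatsoever. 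Moving $\Gamma$ ``just outside $\partial V$'' does not help either, since $\Psi$ is only defined on $V$ and has no reason to extend analytically across $\partial V$ for a general tract.

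The paper's fix is to put the exponent in a different place: instead of $\exp(\alpha\Psi)$, use $f(z)=\exp\bigl(\Psi(z)^{\rho}\bigr)$ with $1<\rho<\min(\theta,2)$. The map $w\mapsto w^{\rho}$ sends the half-plane $\H$ to the sector $\{|\arg w|<\rho\pi/2\}$, which strictly contains $\H$; one can then choose the integration contour as $\Phi^{-1}$ of two rays at angles $\pm\eta$ with $\pi/2<\eta<\rho\pi/2$. Along such a contour $\re(\Psi^{\rho})\to -\infty$, so $|f|$ decays exponentially and the Cauchy integral converges absolutely. The resulting logarithmic transform satisfies $|G(z)-F(z)^{\rho}|\le C$ (an \emph{additive} bound in $\H$, not the multiplicative $G=\alpha F+o(1)$ you wrote), from which both inequalities in (c) and the uniform bound on vertical geodesics follow directly. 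Your argument for the geodesic bound via ``Koebe distortion and imaginary extent at most $2\pi$'' is also incomplete as stated: a vertical geodesic is a crosscut reaching $\partial\wt T$, so it has infinite hyperbolic length, and Koebe alone does not bound its Euclidean diameter; one really uses the comparison $G\approx F^{\rho}$ to control where such a geodesic can lie.
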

\begin{remark} 
  If we apply the above proposition to a tract $T$ with 
    $\cl{T}\subset\H$ (such as the one from Theorem 
    \ref{Thm:RealizationTracts}), then the resulting 
    function $g$ satisfies 
    $f(B_1(0))\Subset B_1(0)$. 
    It follows that the postsingular set is compactly 
    contained 
    in the Fatou set of $g$, and hence that $g$ is hyperbolic. 
\end{remark} 
 \begin{proof}[Proof of Theorem \ref{thm:counterexample} (using Proposition
  \ref{prop:approximation2})]
 Let $F_0\in\Blog$ be the function con\-struc\-ted in
  Theorem \ref{Thm:RealizationTracts}, and let $T$ be its single tract. 
  Choose $1<\theta<M$ (where $M$ is the constant from Theorem 
  \ref{Thm:RealizationTracts}).
  Let $g$ be a function as in Proposition \ref{prop:approximation2}, with
   logarithmic transform $G:\wt{T}\to\H$. 
   (Recall that $G$ extends continuously to the
   closure $\operatorname{cl}(\wt{T})$.)

  The vertical geodesics $C_k$ and $\sep{C}_k$ of
   $T$ intersect $\wt{T}$ for
   sufficiently large $k$.
   Let $\sep{\sigma}_{k+1}$ be maximal with the property that
   the geodesic 
   $\sep{D}_k := \{z\in\operatorname{cl}(\wt{T}): |G(z)|=\sep{\sigma}_{k+1}\}$
   intersects $\sep{C}_k$, and define $\sigma_{k+1}$ and $D_k$ similarly.
   We claim that, with this choice of geodesics,
   the function $G$ also satisfies the conclusions of Theorem 
   \ref{Thm:RealizationTracts} (for a constant $M'<M/\theta$). 

  Indeed, by (\ref{item:sizeofG}) of Proposition
    \ref{prop:approximation2}, 
    we have $\rho_k\leq \sigma_k \leq \rho_k^{\theta}$ and
    $\sep{\rho}_k\leq \sep{\sigma}_k\leq \sep{\rho}_k^{\theta}$. 
    Thus
    \[ \sigma_k^{M'} \leq \rho_k^M < \sep{\rho}_k \leq \sep{\sigma}_k
       \quad\text{and}\quad
       \sep{\sigma}_k^{M'} \leq
        \sep{\rho}_k^M < \rho_{k+1} \leq \sigma_k.
     \]
  Thus
   (\ref{Item:RhoEstimates}') holds. (\ref{Item:RealParts}')  
   and (\ref{Item:TurnPoints}') follow similarly, using the fact that the
   geodesics $D_k$ and $\sep{D}_k$
   have uniformly bounded diameters. 

 To prove (\ref{Item:NotFarBack}'), note that the unbounded component of
  $\wt{T}\sm \sep{D}_k$ does not intersect $\sep{C}_k$
  since we chose $\sep{\sigma}_{k+1}$ to be maximal. Hence it
  follows from condition (\ref{Item:NotFarBack}') for $F_0$ 
  that this component
  has real parts at least $\sep{\rho}_k^M \geq \sep{\sigma}_k^{M'}$. 
  Property (h') follows analogously.
\end{proof}

\section{Entire functions with prescribed tracts}
\label{sec:realization}

  We will now prove Proposition \ref{prop:approximation2}.
   Eremenko and Lyubich \cite{alexmishaexamples} were the
   first to introduce methods of approximation theory into 
   holomorphic dynamics; more precisely they used Arakelian's approximation
   theorem to construct various entire functions with ``pathological''
   dynamics. It is possible to likewise use this theorem to approximate
   any given tract by a logarithmic tract of an entire function; this 
   would be enough to give a counterexample to the strong form of
   Eremenko's conjecture. However, Arakelian's theorem provides no 
   information on the singular values of the approximating map, so a function
   obtained in this manner might not belong to the Eremenko-Lyubich class.

  Hence we instead
   use the method of
   approximating a given tract using Cauchy integrals,
   which is also well-established. 
   (Compare e.g.\ \cite{eremenkogoldberg} for a similar construction.)
   There appears to be no
   result stated in the literature that is immediately 
   applicable to our situation. We will therefore first provide
   a proof of the following, more classical-looking statement, and
   then proceed to indicate how it implies Proposition 
   \ref{prop:approximation2}.

\begin{prop}[Existence of functions with prescribed tracts] 
\label{prop:approximation}
   Let $V\subset\C$ be an unbounded Jordan domain and let
   $\Psi:V\to \H$
   be a conformal isomorphism with $\Psi(\infty)=\infty$.  
   Let $\rho$ be arbitrary with $1<\rho<2$ and define
    \[ f:V\to\C;z\mapsto \exp\bigl((\Psi(z))^{\rho}\bigr). \]
   Then there exists an entire function $g\in\B$ and a number
    $K>0$ such that 
    the following hold:
    \begin{enumerate}
     \item
      $W := \{z:|g(z)|>K\}$ is a simply connected domain
      that is contained in $V$, 
      and $g|_{W}$ is a universal covering;
     \item $|g(z) - f(z)| = O(1)$ on $W$, and
        $g(z)=O(1)$ outside $W$. 
    \end{enumerate}
  \end{prop}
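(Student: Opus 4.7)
The key input would be the rapid decay of $f$ on $\partial V$. Since $\Psi\colon V\to\H$ maps $\partial V$ into the imaginary axis, for $\zeta\in\partial V$ we have $\Psi(\zeta)=it$ for some $t\in\R$, and hence $(\Psi(\zeta))^\rho=|t|^\rho e^{\pm i\rho\pi/2}$ has real part $-c|t|^\rho$, where $c:=-\cos(\rho\pi/2)>0$ because $1<\rho<2$. Thus
\[
|f(\zeta)|=\exp\bigl(-c|\Psi(\zeta)|^\rho\bigr)\qquad\text{for all }\zeta\in\partial V,
\]
so $f|_{\partial V}$ is smooth (by Carath\'eodory) and decays super-polynomially as $\zeta\to\infty$ along $\partial V$.

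Given this decay, I would construct $g$ by the classical Cauchy-transform trick. Define
\[
h(z):=\frac{1}{2\pi i}\int_{\partial V}\frac{f(\zeta)}{\zeta-z}\,d\zeta\qquad(z\in\C\setminus\partial V),
\]
which converges absolutely, and set $g(z):=f(z)-h(z)$ on $V$ and $g(z):=-h(z)$ on $\C\setminus\overline V$. By the Plemelj-Sokhotski jump relations, $h_+(\zeta)-h_-(\zeta)=f(\zeta)$ on $\partial V$, so $g$ has matching boundary values from either side; it therefore extends continuously across $\partial V$ and, by Morera's theorem, is entire. A standard singular-integral estimate, using smoothness of $\partial V$ together with H\"older regularity and super-polynomial decay of $f|_{\partial V}$, would give $\sup_{\C\setminus\partial V}|h|\le C_0<\infty$, and conclusion (b) is then immediate: $|g-f|\le C_0$ on $V$ and $|g|\le C_0$ on $\C\setminus V$.

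For (a), I would fix $K$ much larger than $C_0$ and set $W:=\{|g|>K\}$; then $W\subset V$ automatically. To identify $W$ as a tract I would use the factorization $f=\exp\circ p_\rho\circ\Psi$, where $p_\rho(w):=w^\rho$ is a conformal isomorphism from $\H$ onto the sector $\{|\arg u|<\rho\pi/2\}$. This sector contains the half-plane $\{\re u>\log K'\}$ whenever $K'>1$, so $\{|f|>K'\}\cap V$ corresponds under $p_\rho\circ\Psi$ to that half-plane, making it a simply connected unbounded subdomain of $V$ on which $f$ is a universal covering onto $\{|z|>K'\}$. Since $|g-f|\le C_0$ on $V$ and $g$ has no critical points in the large-$|\Psi|$ regime (where $|f'|$ dominates $|h'|$ by many orders of magnitude), the set $W$ is a small topological perturbation of $\{|f|>K+C_0\}$; an argument-principle computation then shows that $W$ is a single simply connected domain and $g|_W$ is a universal covering onto $\{|z|>K\}$. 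To conclude $g\in\B$: every asymptotic value of $g$ is either $\infty$ (along central curves in $V$) or has modulus at most $C_0$ (along $\partial V$, where $f\to 0$, or in $\C\setminus V$, where $|g|=|h|\le C_0$), and critical values accumulate only in a bounded region.

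The principal technical obstacle will be the uniform boundedness of $h$ near $\partial V$, where the singular-integral kernel $1/(\zeta-z)$ blows up. The super-polynomial decay of $f|_{\partial V}$---a consequence of the particular choice $f=\exp(\Psi^\rho)$ with $1<\rho<2$---is precisely what forces the clean $O(1)$ error, which is in turn essential for the subsequent control of $S(g)$ and hence for $g\in\B$. A more generic Arakelian-style approximation would give only uncontrolled or logarithmic errors on $\partial V$ and could not guarantee the bounded-singular-value condition.
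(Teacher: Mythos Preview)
Your overall strategy---Cauchy integral along a curve, jump relation, then bound the correction term $h$---is exactly the right one and matches the paper's approach in spirit. However, there is a genuine gap in your implementation: you integrate along $\partial V$ itself and invoke ``smoothness of $\partial V$'' and ``H\"older regularity'' to control the singular integral near the boundary. The hypothesis gives you only that $V$ is an unbounded Jordan domain; $\partial V$ need not be rectifiable, let alone $C^{1,\alpha}$, and Carath\'eodory's theorem provides only a continuous (not smooth, not H\"older) extension of $\Psi$ to $\partial V$. Without such regularity, neither the convergence of your Cauchy integral nor the Plemelj--Sokhotski bounds are available, and the uniform boundedness of $h$ near $\partial V$ is precisely the delicate point.

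The paper sidesteps this completely by integrating not along $\partial V$ but along a curve $\alpha$ strictly \emph{inside} $V$: in the image coordinate $\Phi=\Psi^\rho$ one takes two rays from $1$ at angles $\pm\eta$ with $\eta\in(\pi/2,\rho\pi/2)$. Since $\eta>\pi/2$, the real part of $\Phi$ still tends to $-\infty$ along $\alpha$, so $f$ decays exponentially there and the integral converges (polynomial growth of $|\alpha'|$ is controlled via Koebe distortion and hyperbolic-metric estimates, without any smoothness of $\partial V$). Since $\eta<\rho\pi/2$, the curve $\alpha$ stays a definite distance (in $\Phi$-coordinates) from $\partial S$, so for any $z_0$ one can deform $\alpha$ around $z_0$ by a small semicircle and bound $h(z_0)$ uniformly---again using only Koebe, not boundary regularity. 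The existence of this intermediate angle $\eta$ is exactly what the condition $1<\rho<2$ buys; your observation about the sign of $\cos(\rho\pi/2)$ is the boundary case of the same phenomenon.
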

 \begin{remark}
  In particular, the tract $W$ of $g$ satisfies
   \[ V\supset W \supset \{z: \re \Psi(z) > C \text{ and }
                              |\arg \Psi(z)| < \theta \} \]
  (where $\theta$ can be chosen arbitrarily close to 
    $\pi/2\rho$ if $C$ is sufficiently large). So this proposition really
    does present a result on the realization of a prescribed tract
    (up to a certain ``pruning'' of the edges) by an entire function. 
 \end{remark}
  \begin{proof} The idea of the proof is simple: we define a function
   $h$, using an integral along the boundary $\alpha$ of the
   desired tract, which changes by 
    $f(z)$ as $z$ crosses the curve $\alpha$. That is, we set 
    \[ h(z) := \frac{1}{2\pi i}\int_{\alpha} \frac{f(\zeta)}{\zeta - z}d\zeta.
       \] 
   We are using $\Psi^{\rho}$, rather than
    $\Psi$ itself, in the definition of $f$ 
    to ensure that this integral converges
    uniformly and that $h$ is bounded.
    Then the function $g$ that agrees with $h$ on the outside
    of $\alpha$ and with $h+f$ on the inside will be entire, and it follows
    easily that it is in class $\B$.

   Let us now provide the details of this argument.
    We define
    $\Phi := \Psi^{\rho}$ and let $S$ denote the sector
    $S := \Phi(V) = \bigl\{z:|\arg z|<\frac{\rho\pi}{2}\bigr\}$. 
    Also fix some $\eta\in (\pi/2,\rho\pi/2)$ and set
     $\nu := \exp(i\eta)$. We define
     \begin{align*}
       \alphat &: (-\infty,\infty)\to S; 
           t\mapsto \begin{cases}
                      1 + \nu t & t\geq 0 \\
                      1 + \overline{\nu} |t| & t < 0 
                    \end{cases} \quad \text{and} \quad
       \alpha := \Phi^{-1}\circ\alphat. 
    \end{align*}
    Let $V'$ denote the component of $\C\setminus\alpha$ with
     $V'\subset V$.

\begin{figure}%
 \begin{center}
  \resizebox{!}{.3\textheight}{\input{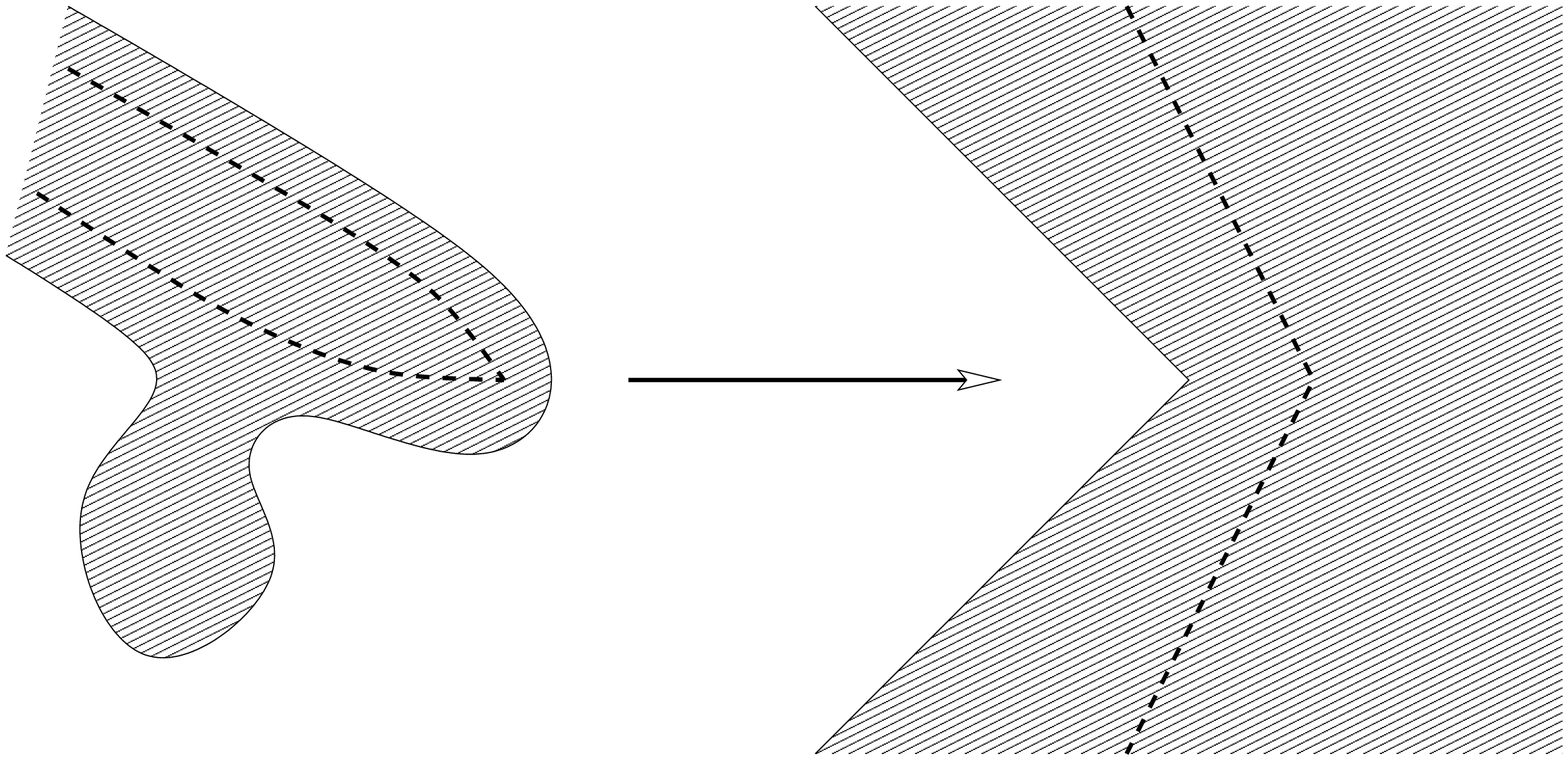tex}}
 \end{center}
 \caption{Definition of $\alpha$ and $\alphat$ in the proof of
   Proposition  
   \ref{prop:approximation}}
\end{figure}

\begin{claim}[Claim 1]
   The integral 
    $\int_{\alpha} f(\zeta) d\zeta$
   converges absolutely. In particular, 
    \[ h(z) := \frac{1}{2\pi i}\int_{\alpha} \frac{f(\zeta)}{\zeta - z}d\zeta.
       \] 
   defines a holomorphic function $h:\C\setminus\alpha\to\C$. 
\end{claim}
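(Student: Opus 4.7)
The plan is to show that $\int_\alpha |f(\zeta)|\,|d\zeta|<\infty$; once this is established, the existence and holomorphicity of $h$ on $\C\setminus\alpha$ are routine, since for $z$ in a compact set $K\subset\C\setminus\alpha$ the factor $|\zeta-z|^{-1}$ is bounded on $\alpha$ by $\dist(K,\alpha)^{-1}$, and dominated convergence then justifies both the definition of $h(z)$ and differentiation under the integral sign. By symmetry it suffices to focus on the half-curve $\alpha^+ = \Phi^{-1}\circ\alphat|_{[0,\infty)}$.

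The driving force of the convergence is the exponential decay of $|f|$ along $\alpha$. Since $\Phi(\alpha(t))=1+\nu t$, one has $|f(\alpha(t))|=e\cdot e^{t\cos\eta}$, and the choice $\eta\in(\pi/2,\rho\pi/2)$ forces $\cos\eta<0$. This is precisely the reason for raising $\Psi$ to the power $\rho>1$ in the definition of $f$: it opens the image sector $S$ beyond a half-plane, so that $\alphat$ can be aimed at an angle past the imaginary axis while still remaining inside $S$ at definite distance from $\partial S$.

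The remaining task is to bound the arc-length factor $|d\zeta/dt|=|(\Phi^{-1})'(\alphat(t))|$ by a sub-exponential function of $t$. I would apply the Koebe distortion estimate
\[
  |(\Phi^{-1})'(w)|\le\frac{4\,\dist(\Phi^{-1}(w),\partial V)}{\dist(w,\partial S)}.
\]
On $\alphat$ the denominator is $\asymp t$ because $\eta<\rho\pi/2$. For the numerator, the hyperbolic distance in $S$ from a fixed base point to $\alphat(t)$ is $O(\log t)$ (as one sees by conformally mapping $S$ to $\H$ via $w\mapsto w^{1/\rho}$); iterating the standard estimate \eqref{eqn:standardestimate} inside $V$ then turns this into a polynomial Euclidean bound $\dist(\Phi^{-1}(\alphat(t)),\partial V)=O(t^c)$ for some $c=c(V)$. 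Hence $|d\zeta/dt|=O(t^{c-1})$, so the integrand is dominated by a polynomial multiple of $e^{t\cos\eta}$ and is integrable on $(0,\infty)$.

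The only real technical point is the polynomial Euclidean bound on $\Phi^{-1}$ along $\alphat$, i.e.\ carefully tracking Euclidean growth against hyperbolic distance via Koebe; everything else reduces to unwinding the relation $\Phi=\Psi^\rho$ and applying standard properties of the Cauchy kernel.
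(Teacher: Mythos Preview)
Your proposal is correct and follows essentially the same approach as the paper: bound $|\alpha'(t)|=|(\Phi^{-1})'(\alphat(t))|$ polynomially via the Koebe/Schwarz estimate $|(\Phi^{-1})'(w)|\le 4\dist(\Phi^{-1}(w),\partial V)/\dist(w,\partial S)$, using that $\dist(\alphat(t),\partial S)\asymp t$ and that $|\alpha(t)|$ grows at most polynomially (since the hyperbolic distance in $S$ to $\alphat(t)$ is $O(\log t)$ and the standard estimate in $V$ converts this into $\log|\alpha(t)|=O(\log t)$), and then integrate against the exponential decay $|f(\alpha(t))|=e\,e^{t\cos\eta}$ with $\cos\eta<0$. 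The paper's proof is organized identically, differing only in cosmetic presentation.
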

  \begin{subproof}
   Note that $|\alpha'(t)|=|1/\Phi'(\alpha(t))|$ for $t\neq 0$. 
   By the Schwarz lemma and Koebe's theorem, we have
    \[ |\Phi'(\alpha(t))| \geq
         \frac{\dist(\alphat(t),\partial S)}{%
               4\dist(\alpha(t),\partial V)}. \]
   Clearly $\dist(\alphat(t),\partial S)\geq C_1(1+|t|)$
    for some $C_1 >0$. So the hyperbolic length of $\alpha|_{[0,t]}$ satisfies
    $\ell_S(\alpha|_{[0,t]})=O(\log (|t|+1))$. On the other hand, by 
    the standard estimate
    (\ref{eqn:standardestimate}), the density $\lambda_V$ of the hyperbolic
    metric on $V$ satisfies $\lambda_V(z)\geq 1/(2(|z|+|z_0|))$ for all
    points $z\in V$ (where $z_0$ is an arbitrary point of $\partial V$),
    which means that 
    $\log|z| = O(\dist_V(z,\alpha(0)))$ as $z\to\infty$.
    Combining these estimates, we see that 
    $|\alpha(t)|$ grows at most polynomially in $|t|$; in particular,
    $\dist(\alpha(t),\partial V) \leq C_2(1+|t|)^c$ for some $c,C_2>0$. 

   Together, these estimates imply by the Koebe theorem that 
    $|\Phi'(\alpha(t))|\geq C/4(1+|t|)^{c-1}$ for $C:= C_1/C_2$. 
     In particular,
    \begin{align*}
      \int_{\alpha} |f(\zeta) d\zeta| &=
      \int_{-\infty}^{+\infty} \exp(\re\alphat(t))|\alpha'(t)|dt \\
     &=
      \int_{-\infty}^{+\infty} \frac{\exp(1 - |\re(\nu)t|)}{|\Phi'(\alpha(t))|}dt
     \leq
      \frac{e}{C}\int_{-\infty}^{+\infty} (1+|t|)^{c-1}
        e^{-|\re(\nu)t|} dt < \infty.
   \end{align*}
     This completes the proof. \end{subproof}

\begin{figure}
  \resizebox{!}{.3\textheight}{\input{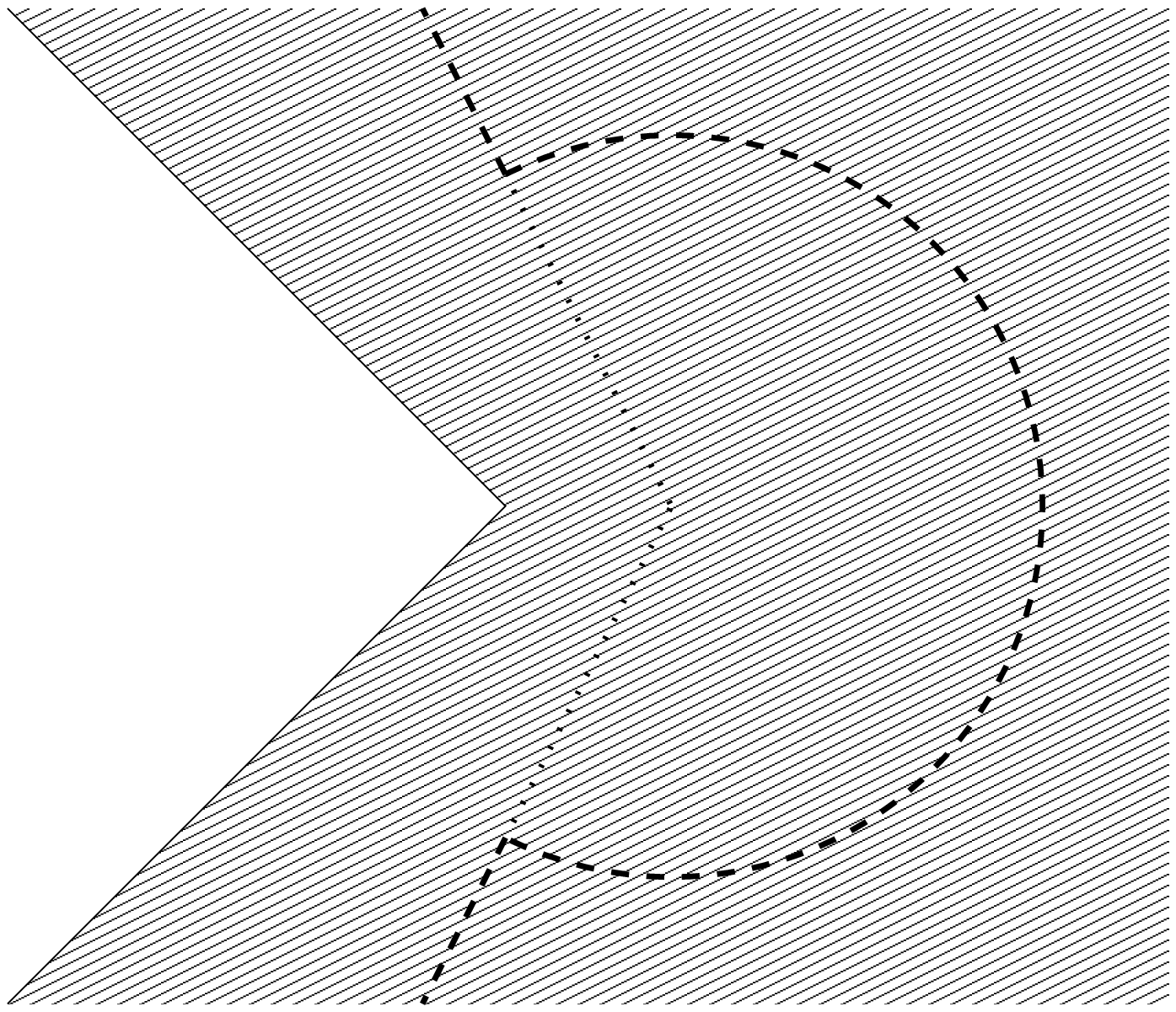tex}}\hfill%
  \resizebox{!}{.3\textheight}{\input{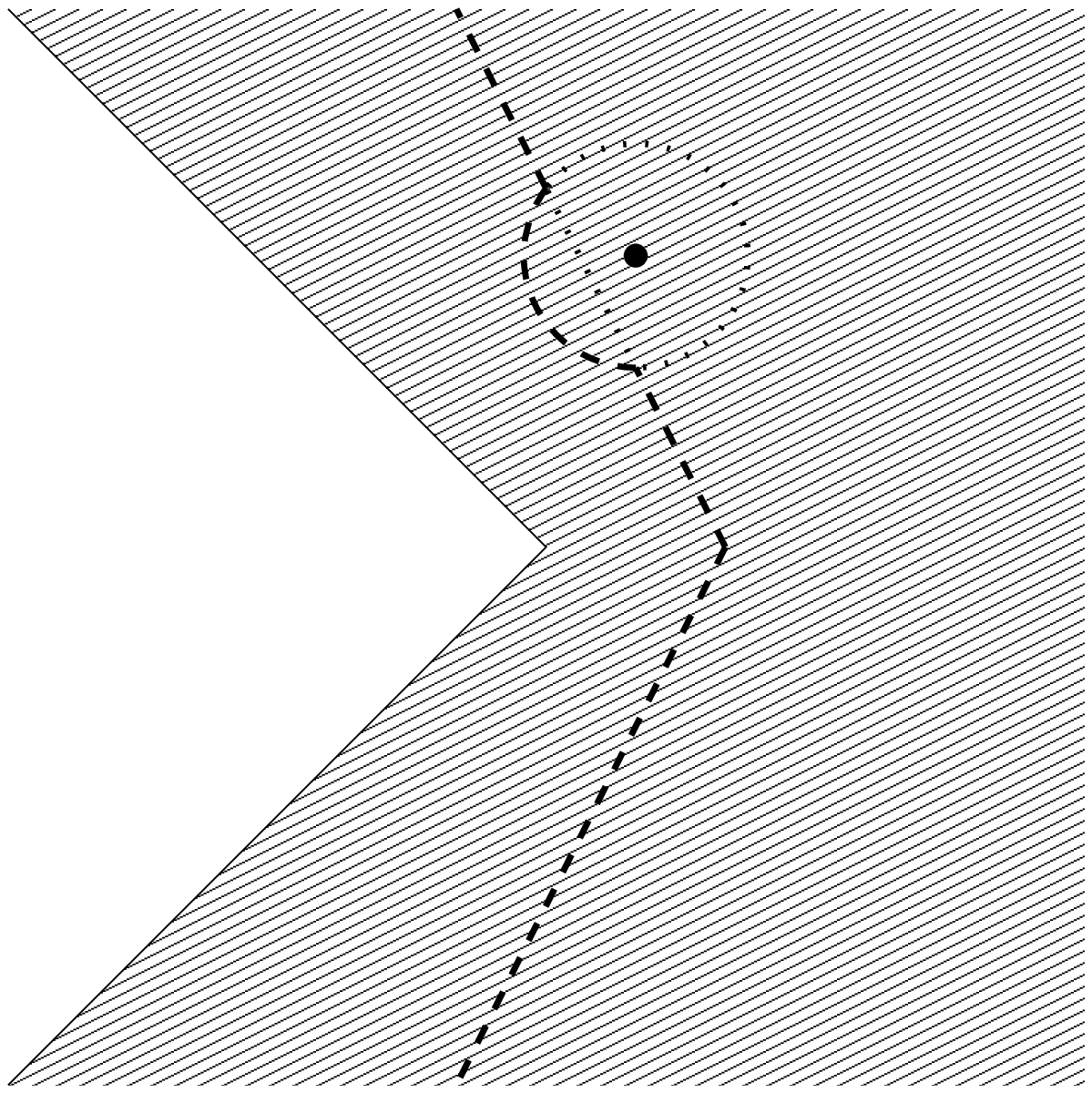tex}}%
 \caption{Definition of $\betat$ and $\gammat$ as in Claims 2 and 3}
\end{figure}

\begin{claim}[Claim 2]
   The function 
    \[ g(z) := \begin{cases}
                 h(z) & z\notin \cl{V'} \\
                 h(z) + f(z)& z\in V'
               \end{cases} \]
   extends to an entire function $g:\C\to\C$.                 
\end{claim}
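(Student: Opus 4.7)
The plan is to invoke the classical Plemelj--Sokhotski jump relation: the Cauchy integral $h$ has a jump discontinuity of size $\pm f(z_0)$ across $\alpha$ at every point $z_0\in\alpha$, and the piecewise definition of $g$ is precisely designed to cancel this jump. Since the integrand is jointly holomorphic and Claim~1 justifies differentiating under the integral sign, $h$ is holomorphic on $\C\sm\alpha$. Combined with the fact that $f$ is holomorphic on $V'\subset V$, this shows $g$ is already holomorphic on $\C\sm\alpha$, so it suffices to prove that $g$ admits a continuous extension across $\alpha$ and that this extension is holomorphic.

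For the continuity step, fix $z_0\in\alpha$ and choose a disk $D$ centered at $z_0$ small enough that $\cl{D}\subset V$ and $\alpha\cap D$ is a Jordan subarc $\alpha_0$ separating $D$ into two half-disks $D^+\subset V'$ and $D^-\subset\C\sm\cl{V'}$. Split
\[
h(z)=\frac{1}{2\pi i}\int_{\alpha_0}\frac{f(\zeta)}{\zeta-z}\,d\zeta
       +\frac{1}{2\pi i}\int_{\alpha\sm\alpha_0}\frac{f(\zeta)}{\zeta-z}\,d\zeta.
\]
The second summand is holomorphic on $D$ (its integrand has no singularity there), so the jump of $h$ across $\alpha_0$ equals that of the first summand. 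Close $\alpha_0$ by a circular arc $\beta\subset\partial D$ lying in $\cl{D^+}$ to form a Jordan loop bounding $D^+$. Cauchy's integral formula, applied to this closed loop, gives $\tfrac{1}{2\pi i}\int_{\alpha_0\cup\beta}f(\zeta)/(\zeta-z)\,d\zeta$ equal to $\epsilon f(z)$ for $z\in D^+$ and $0$ for $z\in D^-$, where $\epsilon\in\{\pm 1\}$ is determined by the orientation. Since $\int_\beta\cdots$ is itself holomorphic on $D$, this shows the two one-sided limits of $h$ at $z_0$ differ by $-\epsilon f(z_0)$. Choosing the orientation of $\alpha$ so that $\epsilon=-1$ (which is forced by the convention that $V'$ is the component of $\C\sm\alpha$ contained in $V$, and can always be arranged by reversing $\alpha$ if necessary), we obtain $\lim_{z\to z_0,\,z\in D^+} h(z)+f(z_0)=\lim_{z\to z_0,\,z\in D^-}h(z)$. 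This is exactly the statement that $g$ is continuous at $z_0$.

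To upgrade continuity to holomorphy across $\alpha$, apply Morera's theorem: for any triangle $\Delta$ in a neighborhood of $z_0$, decompose $\Delta$ into the two subregions cut out by $\alpha$ and integrate $g$ around the boundary of each; since $g$ is holomorphic on the interior of each subregion and continuous up to the common boundary along $\alpha$, both contour integrals vanish, and the contributions along $\alpha$ cancel, yielding $\oint_{\partial\Delta}g=0$. Hence the continuous extension is holomorphic, so $g$ is entire.

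The main point where care is required is the orientation bookkeeping in the second paragraph: one must confirm that the side on which $f$ is added in the definition of $g$ is precisely the side where the one-sided limit of $h$ is smaller by $f(z_0)$. This is a purely local check, unaffected by the behaviour of $\alpha$ at infinity, and is forced by the way $V'$ is singled out; once it is settled the rest of the argument is routine.
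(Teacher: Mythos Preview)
Your argument is correct, but it takes a different and somewhat more laborious route than the paper. The paper deforms the \emph{entire} contour: for large $R$ it replaces the tip of $\tilde\alpha$ by a circular arc to obtain a curve $\tilde\beta$ (and $\beta=\Phi^{-1}\circ\tilde\beta$), and defines $\tilde g(z)=\frac{1}{2\pi i}\int_\beta\frac{f(\zeta)}{\zeta-z}\,d\zeta$ on the unbounded component $W$ of $\C\setminus\beta$. Since $\beta-\alpha$ is a closed loop in $V$ with interior contained in $V'$, Cauchy's theorem gives $\tilde g=h$ on $\C\setminus\cl{V'}$, while the residue theorem gives $\tilde g-h=f$ on $W\cap V'$; hence $\tilde g=g$ on $W\setminus\alpha$, and since $R$ is arbitrary, $g$ is entire. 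This is a one-shot global deformation that directly exhibits $g$ as agreeing with a holomorphic function, with no need for a separate continuity-plus-Morera step.

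Your approach is the local Plemelj--Sokhotski argument, which is perfectly valid but does more work than necessary. In fact your own computation already shows that on $D\setminus\alpha_0$ the function $g$ coincides with the holomorphic function $\frac{1}{2\pi i}\int_{\alpha\setminus\alpha_0}\frac{f(\zeta)}{\zeta-z}\,d\zeta-\frac{1}{2\pi i}\int_\beta\frac{f(\zeta)}{\zeta-z}\,d\zeta$, so holomorphicity on $D$ is immediate and Morera is superfluous. One small caution: you cannot ``reverse $\alpha$ if necessary'', since $h$ and $g$ are already defined using the given parametrisation of $\alpha$; rather, you must check that with this parametrisation $V'$ lies to the right of $\alpha$ (which it does, since $\tilde\alpha$ is traversed from the lower ray to the upper ray with $\Phi(V')$ on the right), so that your loop $\alpha_0\cup\beta$ is negatively oriented around $D^+$ and $\epsilon=-1$ as required.
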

 \begin{subproof}
  Let $R\gg 1$ be arbitrary, and modify $\alphat$ to obtain a curve
    \[ \betat := (\alphat\cap \{|\zeta|>R\})\cup 
       \{1 + Re^{2\pi i \theta}:\theta \in [-\eta,\eta]\}. \]
  Set $\beta := \Phi^{-1}\circ \betat$ and let $W$ be the unbounded
   component of $\C\setminus \beta$ that contains
   $\C\setminus\cl{V'}$. Then 
   \[ \wt{g} : W \to \C; 
         z\mapsto \frac{1}{2\pi i}\int_{\beta} \frac{f(\zeta)}{\zeta-z}d\zeta \]
   defines an analytic function on $W$. By the Cauchy integral theorem,
   $\wt{g}$ agrees with $g$ on $\C\setminus\cl{V'}$.
    
   Furthermore, for 
    $z\in W\cap V'$, we have by the residue theorem that
    \[ \wt{g}(z) - h(z) = \res_z\bigl(\frac{f(\zeta)}{\zeta-z}\bigr)
                     = f(z). \]
    In particular, $\wt{g}=g|_W$.
    Since $R$ was arbitrary, the claim follows. \end{subproof}

\begin{claim}[Claim 3]
   The function $h$ is uniformly bounded.
\end{claim}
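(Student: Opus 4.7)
My plan splits into a far regime and a near regime. For $z$ with $\dist(z,\alpha) \ge 1$, the factor $1/|\zeta-z|$ is bounded by $1$ along $\alpha$, so Claim~1 directly yields
\[
|h(z)| \le \frac{1}{2\pi}\int_{\alpha}|f(\zeta)|\,|d\zeta| < \infty.
\]
This reduces the problem to the regime where $z$ is close to $\alpha$.

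For $z$ with $\dist(z,\alpha) < 1$, the plan is to apply Cauchy's theorem to replace $\alpha$ by a deformed contour $\alpha_z^* \subset V$ that detours around $z$. Since $f$ is holomorphic on all of $V$, any homotopy of $\alpha$ in $V \setminus \{z\}$ (with the same asymptotic behavior at infinity, so that a ``closing arc'' at infinity contributes nothing in the limit, by the exponential decay already used in Claim~1) leaves $h(z)$ unchanged. I will construct the deformation in the $w = \Phi(\zeta)$ coordinate, where $\tilde{\alpha}$ is the explicit V-shape and $|f| = e^{\re w}$. Observing that $\tilde{\alpha} \subset \{\re w \le 1\}$, I deform $\tilde{\alpha}$ in a bounded hyperbolic neighborhood (in $S$) of the closest point to $w_0 := \Phi(z)$, pushing that piece further into $\{\re w \le 1\}$ so as to circumvent $w_0$ on the left. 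This ensures that no residue at $z$ is picked up, while on the detour $|f| \le e$.

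It remains to estimate the detour's contribution uniformly. Conformal invariance of the hyperbolic metric transports the bounded-hyperbolic-neighborhood deformation back to $V$, and the Koebe $1/4$-theorem then shows that in $V$ the detour stays at Euclidean distance $\gtrsim \dist(z,\partial V)$ from $z$ while having Euclidean length $\lesssim \dist(z,\partial V)$. The ratio length/distance that enters the bound
\[
\frac{1}{2\pi}\int_{\text{detour}} \frac{|f(\zeta)|\,|d\zeta|}{|\zeta - z|}
\]
is therefore an absolute constant, and combining with $|f| \le e$ on the detour gives a uniform bound. Together with the part of $\alpha_z^*$ that coincides with $\alpha$ (controlled as in the far regime), this would prove that $h$ is uniformly bounded on $\C \setminus \alpha$.

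The main obstacle I anticipate is the case where $w_0$ lies to the right of $\tilde{\alpha}$ (i.e., $z \in V'$) and is hyperbolically close to the tip of the V at $w = 1$: there the natural hyperbolic neighborhood of the closest point on $\tilde{\alpha}$ may straddle both arms of the V, so the detour must be chosen carefully so as not to cross into $\{\re w > 1\}$, where $|f|$ would become large. Verifying that such a detour always exists uniformly, and that the Koebe distortion estimates remain valid near the tip (where $\dist(w_0,\partial S)$ could be small), is the principal technical point of the argument.
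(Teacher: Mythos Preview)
Your far regime is fine; the near regime is where the proposal has a genuine gap, and it is not the tip issue you flag.

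After you replace a bounded hyperbolic neighborhood on $\tilde\alpha$ by a detour, you say the remaining part of $\alpha_z^*$ (the piece that still coincides with $\alpha$) is ``controlled as in the far regime.'' But the far-regime bound used $|\zeta-z|\ge 1$ for all $\zeta\in\alpha$, and this is \emph{not} available on the non-detoured part when $\dist(z,\alpha)<1$. Outside your hyperbolic neighborhood you only know that $\zeta$ is at bounded hyperbolic distance $\gtrsim 1$ from $z$, which translates to a Euclidean lower bound $|\zeta-z|\gtrsim \dist(z,\partial V)$. Since $\dist(z,\partial V)$ is not bounded below (indeed $\alpha$ can come arbitrarily close to $\partial V$ wherever $|\Phi'|$ is large), you cannot simply invoke the Claim~1 integral $\int_\alpha |f|\,|d\zeta|$ to control this piece. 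A separate, more careful estimate is required.

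The paper's proof avoids this difficulty by never splitting the contour integral into detour and non-detour parts. It deforms $\tilde\alpha$ inside a \emph{fixed Euclidean} disk $\{|w-\Phi(z_0)|<\delta/2\}$ in the $w$-plane (choosing the arc of the circle that does not separate $\Phi(z_0)$ from $\infty$, so either side is allowed), and then estimates the full integral in the arclength parameter of $\tilde\gamma$. The key is the Koebe bound $|\gamma(t)-z_0|\ge \delta/(8|\Phi'(\gamma(t))|)$, valid along the \emph{entire} deformed curve because every point of $\tilde\gamma$ is at $w$-distance $\ge\delta/2$ from $\Phi(z_0)$. Combined with $|d\zeta|=dt/|\Phi'(\gamma(t))|$, the derivatives cancel and one is left with $\int e^{\re\tilde\gamma(t)}\,dt$, which is uniformly bounded since $\re\tilde\gamma(t)\le C-K|t|$ for constants $C,K$ independent of $z_0$.

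Two further remarks. First, your worry about the tip is misplaced: $\dist(\tilde\alpha,\partial S)$ is bounded below everywhere, including at $w=1$, so Koebe applies uniformly; and allowing the detour to enter $\{\re w>1\}$ by a bounded amount is harmless since $|f|\le e^{1+\delta}$ there. Second, your detour ``always to the left'' is wrong when $w_0$ lies outside the V (i.e.\ $z\notin V'$), and you have not treated the case $z\notin V$ at all, where $\Phi(z)$ is undefined. The paper handles both by letting the detour go to whichever side avoids $\Phi(z_0)$, and by observing for $z\notin V$ that $|\gamma(t)-z|\ge\dist(\gamma(t),\partial V)$, which again gives the same Koebe lower bound.
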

 \begin{subproof}
    Let $z_0\in\C\setminus\alpha$. We set
    $\delta := \sin(\eta) = \dist(\alphat, \partial S)$ and
    define a curve
    $\gammat$ (depending on $z_0$) as follows.
    If $z_0\notin V$ or
    if $z_0\in V$ and $\dist(\Phi(z_0),\alphat)\geq \delta/2$, we
    simply set $\gammat := \alphat$. Otherwise, we set
     \[ \gammat := (\alphat \setminus 
                   \{z:|z-\Phi(z_0)|<\delta/2\}) \cup C, \]
    where $C$ is the arc of the circle $\{|z-\Phi(z_0)|=\delta/2\}$ for which
    $\alphat\cup C$ does not separate $\Phi(z_0)$ from $\infty$. 

   We also set $\gamma := \Phi^{-1}\circ\gammat$. 
    By Cauchy's integral theorem, we have
    \[ h(z_0) = \frac{1}{2\pi i}
             \int_{\alpha} \frac{\exp(\Phi(\zeta))}{\zeta-z}d\zeta =
       \frac{1}{2\pi i}\int_{\gamma} \frac{\exp(\Phi(\zeta))}{\zeta-z}d\zeta. \]
   Thus it is sufficient to show that the second integral is bounded
    independently of $z_0$. By the Koebe $1/4$-theorem and the definition
    of $\gamma$, we have
     $|\gamma(t)-z_0| \geq 
        \delta/8|\Phi'(\gamma(t))|$
    for all $t$. If we parametrize $\gammat$ by arclength, then clearly
     \[ \re\gammat(t)\leq C - K|t|, \]
    where the constants
    $K=|\re\nu|$ and $C$ are independent of $z_0$. We thus have
   \begin{align*}
    2\pi |h(z_0)| &= 
     \left|\int_{\gamma}
       \frac{\exp(\Phi(\zeta))}{\zeta-z_0}d\zeta\right| 
    \leq
       \int_{-\infty}^{+\infty} 
        \frac{|\exp(\gammat(t))|}{|\gamma(t)-z_0|}|\betat'(t)|dt  \\
     &=
       \int_{-\infty}^{+\infty}
        \frac{\exp(\re\gammat(t))}{|\Phi'(\gamma(t))|\cdot|\gamma(t)-z_0|}dt
     \leq \int_{-\infty}^{+\infty}
        \exp(C-K|t|)\frac{8|\Phi'(\gamma(t))|}{\delta|\Phi'(\gamma(t))|}
        dt \\
      &= \frac{8}{\delta} \int_{-\infty}^{+\infty}
          \exp(C-K|t|)dt <\infty. 
   \end{align*}
   So $h$ is uniformly bounded, as required. \end{subproof}

  To complete the proof,
   let $M>0$ such that $|h(z)| < M$ for all $z$. Set
   $K := 2M$ and
   $W := \{z\in\C: |g(z)|> K\}$.
   If $\beta$ is a simple closed 
   curve in $W$, then $|\Phi(z)|> M$ on $\beta$. By the minimum principle,
   we 
   also have $|\Phi(z)|>M$ on the region $U$ surrounded by
   $\beta$. It follows that $g$ has no zeros in $U$, and by the minimum
   principle $U\subset W$. Thus $W$ is simply connected.
 
   We can therefore define a function
    \[ G := \log g : W \to \{\zeta\in\C: \re \zeta > \log(2M)\}. \]
    It is easy to see that $G$ is proper. 
    Since there is exactly one homotopy class of
    curves in $W$ along which $G(z)\to \infty$, the degree of $G$
    is $1$. In other words, $G$ is a conformal isomorphism, and
    $f|_{W} = \exp\circ G$ is a universal cover, as required. 
   \end{proof}

 \begin{proof}[Proof of Proposition \ref{prop:approximation2}]
   Let $V := \exp(T)$, and let $\Psi:V\to \H$ be the conformal
    isomorphism with $\Psi\circ\exp = F$. Let $1<\rho<\min(\theta,2)$, 
    let $f$ be as in Proposition \ref{prop:approximation}, and let
    $\wt{g}$ be the entire function constructed there. Recall that this
    function satisfies
    $|\wt{g}(z)-f(z)|=O(1)$ on its tract 
    $W=\wt{g}^{-1}(\{|z|>K\})$. It easily follows that
    the logarithmic transform $\wt{G}$ also satisfies
    $|\wt{G}(z) - F(z)^{\rho}| \leq C_1$ for some 
    $C_1>0$. 

   Now set
    $g(z) := \wt{g}(z)/K$, and let 
    $G:\wt{T}\to\H$ be its logarithmic transform; i.e.\
    $G(z)=\wt{G}(z)-\log K$. We claim 
    that $g$ is the desired entire function. Indeed, by choice
    of $\wt{g}$, we have
     \[ |(F(z))^{\rho} - G(z)| \leq C_1 + \log K =: C, \]  
    which proves (\ref{item:sizeofG}). 

   To complete the proof, let $\gamma=\{z\in \wt{T}: |G(z)|=R\}$ be a
    vertical geodesic (where $R$ is sufficiently large; say $R\geq C+1$). 
    We need to prove that the diameter of $\gamma$
    is bounded independently of $R$. 
    So let $z\in\gamma$. 
    Then $|F(z)^{\rho} - G(z)|\leq C$, which implies that
    \[ \left| |F(z)| - R^{1/\rho}\right| \leq
       C\quad\text{and}\quad
       |\arg F(z)| < \pi/(1+\eps) \]
    (where $\rho=1+2\eps$), provided $R$ was chosen large enough. 

    The hyperbolic diameter of the subset of $\H$ described by these 
     inequalities --- and hence that of $F(\gamma)$ --- 
     is uniformly bounded.
     Since $F:T\to\H$ is a conformal isomorphism, the 
     standard estimate (\ref{eqn:standardestimate})
     on the hyperbolic metric on $T$, together with the fact that
     $T$ does not intersect its $2\pi i\Z$-translates,  implies 
     that the euclidean diameter of $\gamma$ is uniformly bounded 
     as well. 
  \end{proof}

 \section{Properties of the counterexample} \label{sec:properties}

 The goal of this section is to indicate how the counterexample 
  $f$ from Theorem \ref{thm:counterexample} (constructed in Section 
  \ref{sec:counter}) can be strengthened in various ways.
  We begin by discussing the growth behavior of the function $f$,
  and how to modify the construction to reduce this growth further.
  The section concludes with a sketch of the construction of a hyperbolic
  entire function whose Julia set contains no nontrivial curves at all.

 \subsection*{Order of growth}
 By Theorem \ref{thm:positive}, 
  we know that the counterexample $f$ from Theorem
   \ref{thm:counterexample} cannot have finite order; that is, we cannot have
   $\log \log |f(z)| = O(\log|z|)$. We now see that its growth is not
   all that much faster than this. 

\begin{prop}[Growth of counterexample] \label{prop:growth1}
 The function $f$ constructed in the proof of Theorem \ref{thm:counterexample} 
   satisfies
   \[ 
   \log \log |f(z)| = O\left((\log |z|)^{12M^2}\right) \,\,. 
   \]
\end{prop}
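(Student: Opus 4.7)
The plan is to trace through the construction of $f$ in Section~\ref{sec:counter}, reading off the growth rate from the explicit geometry of the tract. Recall that $f$ is produced by Proposition~\ref{prop:approximation2} applied to the tract $T$ and Riemann map $F_0$ of Theorem~\ref{Thm:RealizationTracts}, and the logarithmic transform $G$ of $f$ satisfies $|G(\zeta)|\le |F_0(\zeta)|^\theta$ on $\wt T\subset T$ for some fixed $\theta$ with $1<\theta<M$. Outside the tract $W=\{|f|>K\}$ we have $|f(z)|=O(1)$ by Proposition~\ref{prop:approximation}, so only points $z\in W$ can contribute to the growth. For such $z$, I would set $\zeta:=\log z\in\wt T$; then $\re\zeta=\log|z|$ and
\[ \log|f(z)| = \re G(\zeta) \le |G(\zeta)| \le |F_0(\zeta)|^\theta. \]
Hence it suffices to prove $\log|F_0(\zeta)| = O((\re\zeta)^{12M^2})$.

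Next, since $F_0\colon T\to\H$ is a conformal isomorphism and hence a hyperbolic isometry, fixing the base point $P=1\in T$ gives
\[ \log|F_0(\zeta)| \le \dist_{\H}(1,F_0(\zeta)) + O(1) = \dist_T(P,\zeta) + O(1). \]
The tract $T$ is a union of unit-width tubes along the curve $\Gamma=\bigcup_k(\gamma_k\cup\dot\gamma_k)$, so by the standard estimate~(\ref{eqn:standardestimate}) the hyperbolic density of $T$ is uniformly bounded on $\Gamma$, and every point of $T$ lies at Euclidean distance at most $1/2$ from $\Gamma$. Consequently $\dist_T(P,\zeta)$ is bounded by a constant multiple of the Euclidean arc length along $\Gamma$ from $P$ to a nearest point of $\Gamma$ to $\zeta$.

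Finally, I would exploit the geometric feature already implicit in Theorem~\ref{Thm:RealizationTracts}: each segment $\gamma_j$ and $\dot\gamma_j$ has Euclidean length $O(\dot\xi_j)$, and since the sequence $\dot\xi_j$ grows super-exponentially (by $\dot\xi_{j+1}=\exp(12M\dot\xi_j)$) the partial sum $\sum_{j\le k}\dot\xi_j$ is $O(\dot\xi_k)$. The worst case for the ratio $\dist_T(P,\zeta)/(\re\zeta)^{12M^2}$ occurs at the leftmost point of a deep wiggle, namely when $\zeta$ sits near the tip of $\dot\gamma_k$ at real part roughly $\xi_k$, while the total arc length already traversed along $\Gamma$ is $O(\dot\xi_k)$. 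Since $\dot\xi_k=\xi_k^{12M^2}$ by construction of the sequences, this gives
\[ \dist_T(P,\zeta) = O(\dot\xi_k) = O(\xi_k^{12M^2}) \le O((\re\zeta)^{12M^2}), \]
and combining the previous steps yields $\log\log|f(z)| \le \theta\log|F_0(\zeta)|+O(1) = O((\log|z|)^{12M^2})$, as required. The main point to verify carefully is that this wiggle tip truly is the worst case; in any other position along $\gamma_j$ or $\dot\gamma_j$ the length traversed is $O(\re\zeta)$, which gives a much stronger bound. This verification is essentially the same calculation used to obtain the estimates $\rho_{k+1}\le\exp((4\pi/3)\dot\xi_k)$ and $\dot\rho_{k+1}\le\exp(12\dot\xi_k)$ inside the proof of Theorem~\ref{Thm:RealizationTracts}, so no new ideas beyond bookkeeping of the construction are needed.
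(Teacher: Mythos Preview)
Your proposal is correct and follows essentially the same route as the paper: both reduce to showing $\log|F_0(\zeta)|=O\bigl((\re\zeta)^{12M^2}\bigr)$, both identify the extremal points as the tips of the wiggles at $\re\zeta\approx\xi_k$, and both conclude via the defining relation $\sep\xi_k=\xi_k^{12M^2}$. The paper is marginally terser because it quotes the already-proved bound $\sep\rho_{k+1}<\exp(12\sep\xi_k)$ from Theorem~\ref{Thm:RealizationTracts} instead of redoing the hyperbolic length estimate, but as you yourself note, this is the same calculation.
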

\begin{proof}
  We verify that the function 
    $F:T\to \H$ from Theorem \ref{Thm:RealizationTracts} satisfies
    \begin{equation} \log \re F(z) = O\left( (\re z)^{12M^2}\right).  \label{eqn:orderestimate}
    \end{equation}
   (The claim then follows immediately from the fact that $f$ is obtained
   from $F$ by applying Proposition \ref{prop:approximation2}).

  We use the notation of the proof of Theorem \ref{Thm:RealizationTracts}
   (recall Figure \ref{FigWiggleConstructionDetails}). 
   Pick points
   $p_k$ with real parts $\re(p_k) = \xi_k$ and satisfying
   $F(p_k)\in (\rho_{k+1},\infty)$
   (that is, $p_k$ lies half-way between the geodesics $C_k$ and $\sep{C}_k$,
   in the place where $T$ ``turns around'': it is here that the values of
   $\re F(z)$ are largest in terms of $\re z$).   
   It is not difficult to see that it is sufficient to verify
    (\ref{eqn:orderestimate}) when $z=p_k$. (In other parts of the 
    tract, $\log \re F(z)$ increases at most linearly with 
    $\re z$.) 

  We have 
   \[ \log \re F(p_k) \leq \log \sep{\rho}_{k+1} < 
              \log \sep \xi_{k+1}^{1/M} = 12\sep \xi_k. \]

  It remains to estimate $\sep{\xi}_k$ in terms of $\re(p_k) = \xi_k$, 
   which we can do because
   \begin{equation} \label{eqn:keyorderestimate}
    \sep\xi_k = \xi_k^{12M^2}
   \end{equation}
   by definition.
    So 
   \[ \log \re F(p_k) \leq 12\sep \xi_k = 12 \xi_k^{12M^2} =
            12 \re(p_k)^{12M^2}, \]
      as required. \end{proof}

 We are now going to discuss how to improve the growth behavior of $f$.
    Recall that $M>1$ was arbitrary; we will show how to reduce the 
    constant $12$ in the growth estimate to any number greater than $1$.
    Note that the main fact that influenced the growth of
    $f$ in the previous proof was (\ref{eqn:keyorderestimate}).
   We can improve the growth behavior of our counterexample by making the part
   of the tract leading up to $C_{k-1}$ thinner: 
   this will increase $R_k$ and hence $\xi_k$, while keeping
    $\sep{\xi}_k/\xi_k$ essentially the same. 

   More precisely, consider a tract described by a variation of Figure 
    \ref{FigWiggleConstructionDetails},
    where the upper of the three horizontal tubes connecting real parts
    $\xi_k$ and $\sep\xi_k$ has
    small height $\delta>0$, while the other two tubes remain at unit height. 
    In order for the proof to go through as before,
     $\xi_{k+1}$ will be roughly of size 
     $\exp(\sep\xi_k/(M\delta))$, while $\sep \xi_{k+1}$ should be chosen
     of size
      \[ \sep \xi_{k+1} \sim \xi_{k+1}^{M^2} \cdot \exp(CM\sep\xi_k). \]
    In other words, we will have
       \[ \sep \xi_{k+1} \lesssim \xi_{k+1}^{M^2(1+\delta C)}. \]

   (With such choices
    of $\xi_{k+1}$ and $\sep\xi_{k+1}$, better estimates on 
    $\rho_{k+1}$ and $\sep\rho_{k+1}$ are required in the 
    proof of 
    Theorem~\ref{Thm:RealizationTracts}. These are not difficult
    to furnish, but we shall not give the details here.)

     Hence we see, as in Proposition
     \ref{prop:growth1}, that
     \[ \log \re F(p_k) \leq C \re(p_k)^{M^2(1+\delta C)}. \] 
     By letting $\delta>0$ and $M>1$ be sufficiently small, 
     we have obtained the following result.

 \begin{prop}[Counterexamples of mild growth]
  For every $\eps>0$, there is a hyperbolic function $f\in\B$ such that
   $J(f)$ has no unbounded path-connected components, and such that
   \[ \log \log |f(z)| = O((\log |z|)^{1+\eps}).\qedoutsideproof \]
 \end{prop}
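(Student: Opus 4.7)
The plan is to modify the geometric construction of Theorem \ref{Thm:RealizationTracts} exactly along the lines sketched in the paragraphs immediately preceding this proposition. Given $\eps>0$, I would first fix $M>1$ and $\delta\in(0,1)$ small enough that $M^2(1+\delta C) < 1+\eps$, where $C$ is an absolute constant emerging from the modified estimate of $\sep{\rho}_{k+1}/\rho_{k+1}$. The tract $T$ is to be redrawn by replacing the uppermost of the three horizontal tubes of length roughly $\sep\xi_k-\xi_k$ (the one between the leftmost turn and $P_k$ in Figure \ref{FigWiggleConstructionDetails}) by a tube of height $\delta$ instead of $1$, while leaving all other tubes, turns, and the half-disc around $P$ unchanged. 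Condition (\ref{Item:Domain}) still holds with the same constant $H$, and the combinatorial picture of the geodesics $C_k$, $\sep{C}_k$, $\sepp{C}_k$ is preserved.

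With this modification, the recursion is to be redefined as $\xi_{k+1}:=\exp(\sep\xi_k/(M\delta))$ and $\sep\xi_{k+1}:=\xi_{k+1}^{M^2}\exp(CM\sep\xi_k)$, starting from a sufficiently large $\xi_0$. The reason for the factor $1/\delta$ is that the hyperbolic length of the thin tube with respect to the metric of $T$ is inflated by $1/\delta$, so $\xi_{k+1}$ must be shifted correspondingly further to the right to retain property $(\ref{Item:NotFarBack}')$. The next step is to redo the proof of Theorem \ref{Thm:RealizationTracts} with these new sequences: using the standard estimate (\ref{eqn:standardestimate}) along the curve $\Gamma$, but now counting the thin tube with the factor $1/\delta$, one finds $\log\rho_{k+1}\asymp\sep\xi_k/\delta$, and the Gr\"otzsch inequality applied to the semi-annulus in $T$ between $C_k$ and $\sep{C}_k$ gives $\log(\sep\rho_{k+1}/\rho_{k+1})\geq CM\sep\xi_k$ for a suitable $C$. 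These bounds are then used to verify the modified conditions $(\ref{Item:RhoEstimates}')$ through $(\ref{Item:TurnPoints}')$ and (h'), after which Corollary \ref{cor:boundedpathcomponents} again applies and shows that every path component of $J(F)$ is bounded.

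Finally, Proposition \ref{prop:approximation2} with $\theta>1$ chosen sufficiently close to $1$ will produce an entire function $f\in\B$, hyperbolic by the remark following that proposition, whose Julia set has only bounded path-connected components. Repeating the computation of Proposition \ref{prop:growth1} at the ``turn'' points $p_k$ with $\re(p_k)=\xi_k$, where the ratio $\log\re F(p_k)/\re(p_k)$ is largest, yields
\[ \log\re F(p_k) \leq C'\sep\xi_k \leq C'\xi_k^{M^2(1+\delta C)} = O\bigl(\re(p_k)^{1+\eps}\bigr), \]
with analogous estimates in the rest of the tract. Transferring this to $f$ via part (\ref{item:sizeofG}) of Proposition \ref{prop:approximation2} yields the desired $\log\log|f(z)|=O((\log|z|)^{1+\eps})$. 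The main obstacle is the careful bookkeeping in the modified version of Theorem \ref{Thm:RealizationTracts}: thinning one tube disturbs several of the hyperbolic-distance estimates that control $\rho_k$ and $\sep\rho_k$, and one must verify that none of the separating conditions needed for boundedness of path components is lost in the process.
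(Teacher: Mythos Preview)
Your proposal is correct and follows essentially the same approach as the paper: the paper's ``proof'' of this proposition is precisely the sketch in the paragraphs preceding it (the statement carries a \verb|\qedoutsideproof|), namely thinning the upper horizontal tube to height $\delta$, redefining the recursions by $\xi_{k+1}\approx\exp(\sep\xi_k/(M\delta))$ and $\sep\xi_{k+1}\approx\xi_{k+1}^{M^2}\exp(CM\sep\xi_k)$ so that $\sep\xi_{k+1}\lesssim\xi_{k+1}^{M^2(1+\delta C)}$, and then choosing $M>1$ and $\delta>0$ small. Your write-up fills in the intended details (re-running the hyperbolic estimates, applying Proposition~\ref{prop:approximation2}, repeating the growth computation at the turn points $p_k$) exactly as the paper indicates one should, and your identification of the bookkeeping in the modified Theorem~\ref{Thm:RealizationTracts} as the only nontrivial step matches the paper's own parenthetical remark that ``better estimates on $\rho_{k+1}$ and $\sep\rho_{k+1}$ are required \ldots\ but we shall not give the details here.''
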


  Finally, we do not need to fix the height $\delta$, 
   but rather can let it tend to $0$ 
   in a controlled fashion, so that wiggles at large real parts have values of $\delta$ close to $0$.

Also note that, in all our examples, $\log \re F(z)$ grows at 
 most linearly with $\re(z)$ within the long
 horizontal tubes \emph{between} two consecutive ``wiggles'' (i.e., 
 between $\sep \xi_k+2h'+4$ and
 $\xi_{k+1}$ in Figure \ref{FigWiggleConstructionDetails}). 
 We claim that this means
 that the \emph{lower order} of $F$; i.e.\ the number
  \[ \liminf_{r\to\infty} \sup_{\re z=r} \frac{\log \re F(z)}{r} \]
 is finite. 

 Indeed, let $w_n$ and $\sep{w}_n$ be points at the beginning and
  the end of this tube, respectively. That is, $w_n$ is at real parts 
  slightly larger than $\sep \xi_k$ and $\sep{w}_n$ is at real parts slightly
  below $\xi_{k+1}$. We then have
    \[ |F(\sep{w}_n)| \leq |F(w_n)|\cdot \exp(C\cdot \re \sep{w}_n), \]
  where essentially $C=\pi$.
  It follows from the construction that 
   $|F(w_n)|$ grows at most like $\sep \xi_{k+1}$, and hence
   by (\ref{eqn:keyorderestimate})
   is bounded by $\xi_{k+1}^{A}$ for some $A>1$. So overall
    \[ \log \re F(\sep{w}_n) \leq A\log \xi_{k+1} + C\re \sep{w}_n \leq
                    A\log \re \sep{w}_n + C\re \sep{w}_n, \]
   and the lower order is at most $C$. 

 Since there are no other parts of the tract $T$ between the real parts of 
  $w_n$ and
  $\sep{w}_n$, 
  we can actually modify $T$ so that
  these tubes have the maximal possible height
  $2\pi$. Then the lower order of the resulting function $F$
  will be $C=1/2$, which is  the minimal possible
  value for a function in $\Blog$ by the Ahlfors
  distortion theorem \cite[Section 4.12]{ahlforsconformal}. 

 Altogether, this yields the following.

 \begin{prop}[More counterexamples of mild growth] 
   \label{prop:smallgrowthexample}
  There exists a function $F\in\Blog$ such that
  \begin{enumerate}
   \item $\displaystyle{\log \re F(z) = (\re z)^{1+o(1)}}$ as $\re z\to\infty$,
   \item $F$ has lower order $1/2$, and
   \item $J(F)$ has no unbounded path-connected components.
  \end{enumerate}
 \end{prop}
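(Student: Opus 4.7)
The plan is to refine the construction of Theorem \ref{Thm:RealizationTracts}, following the outline sketched in the paragraphs preceding the proposition, by letting the ``wiggle heights'' and the constant $M$ depend on $k$ in a controlled way, and by widening the long straight connecting tubes to the maximal admissible height $2\pi$. Throughout, I will work with a single tract $T$ and its Riemann map $F_0:T\to\H$; the resulting $F\in\Blog$ is then obtained by $2\pi i$-translation as in Theorem \ref{Thm:NoCurveToInfinity}, so there is no need to invoke the Cauchy integral approximation of Section \ref{sec:realization}.

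First, I would fix sequences $\delta_k\searrow 0$, $\eta_k\searrow 0$, $M_k\searrow 1$ and a growing parameter $h_k'\nearrow\infty$ (all tending to their limits slowly enough that the starred inequalities of Theorem \ref{Thm:RealizationTracts} remain valid uniformly in $k$) and build $T$ exactly as in Figure \ref{FigWiggleConstructionDetails}, except that the three thin horizontal tubes making up the $k$-th wiggle have height $\delta_k$ in place of $1$, and the long straight tube connecting the end of the $k$-th wiggle (near real part $\sep\xi_k+2h'_k+4$) to the beginning of the $(k{+}1)$-th wiggle (near real part $\xi_{k+1}$) has height $2\pi-\eta_k$. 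As indicated in the discussion between Proposition \ref{prop:growth1} and the present statement, the recursion of Theorem \ref{Thm:RealizationTracts} is then replaced by
\[
\xi_{k+1}\sim\exp(\sep\xi_k/(M_k\delta_k))\qquad\text{and}\qquad \sep\xi_{k+1}\sim \xi_{k+1}^{M_k^2(1+C\delta_k)},
\]
for a universal constant $C$. With the choice of parameters above, this gives $\sep\xi_{k+1}=\xi_{k+1}^{1+o(1)}$, and consequently
\[
\log\re F(p_k)\le 12\,\sep\xi_k = \xi_k^{1+o(1)} = (\re p_k)^{1+o(1)}
\]
at the maximal-real-part point $p_k$ of the $k$-th wiggle. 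Away from the wiggles, $\log\re F$ grows at most linearly in $\re z$, so property (1) follows.

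Next I would establish (2) by using the Ahlfors distortion theorem \cite[Section 4.12]{ahlforsconformal} on the long wide tubes. Let $w_n$ and $\sep w_n$ denote points near the entrance and exit of the $n$-th straight tube of width $2\pi-\eta_n$; the Ahlfors theorem gives
\[
\log|F(\sep w_n)|\le \log|F(w_n)| + \bigl(\tfrac{1}{2}+o(1)\bigr)(\re\sep w_n-\re w_n).
\]
From (1) we know $\log|F(w_n)|=O(\log\sep\xi_{k+1})=o(\re\sep w_n)$, so $\log\re F(\sep w_n)/\re \sep w_n\to 1/2$ along this subsequence of real parts, proving $\liminf\le 1/2$. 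The reverse inequality $\liminf\ge 1/2$ is the usual Ahlfors distortion lower bound: any $F\in\Blog$ has tracts of width at most $2\pi$, and this forces lower order at least $1/2$. Finally, property (3) is immediate from Corollary \ref{cor:boundedpathcomponents}, since the modified conditions (\ref{Item:RhoEstimates}')--(h') of Theorem \ref{Thm:RealizationTracts} continue to hold by construction (with the $k$-dependent constants absorbed into the choices of $\delta_k$, $\eta_k$, $M_k$, $h'_k$).

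The main obstacle is the bookkeeping in the first step: one must verify that the hyperbolic estimates underlying Theorem \ref{Thm:RealizationTracts} — most importantly Lemma \ref{Lem:GeometryGeodesic}, the Gr\"otzsch modulus bound for the semi-annulus between $C_k$ and $\sep C_k$, and the estimates on $\log\rho_{k+1}$ and $\log\sep\rho_{k+1}$ via the standard estimate on $\lambda_T$ — all survive when the ambient tube widths collapse to $\delta_k\to 0$ (which requires the lengths $h'_k$ to grow to control the geometry of $C_k$, $\sep C_k$) while simultaneously the connecting tubes widen to nearly $2\pi$ (which weakens the hyperbolic expansion available for pulling back by $F^{-1}$). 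Checking that the rates $\delta_k, \eta_k, M_k-1\to 0$ can be chosen consistently — in particular slowly enough that every $\stackrel{\star}{<}$ of the earlier proof still holds uniformly, yet fast enough to yield $(1+o(1))$ in the exponent — is the delicate point; once this is done, properties (1)--(3) follow formally from the three steps above.
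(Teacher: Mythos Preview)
Your proposal is correct and follows essentially the same sketch the paper gives in the paragraphs immediately preceding the proposition: let the wiggle widths $\delta_k\to 0$ and $M_k\to 1$ to obtain the $(1+o(1))$ exponent, and widen the long connecting tubes toward height $2\pi$ so that the lower order drops to the Ahlfors-distortion minimum $1/2$. The only cosmetic differences are that the paper thins just the \emph{upper} of the three horizontal tubes (leaving the other two at unit height), and speaks of tubes of height ``$2\pi$'' rather than your more careful $2\pi-\eta_k$; your acknowledgement that the bookkeeping (Lemma~\ref{Lem:GeometryGeodesic}, the starred inequalities, the hyperbolic length estimates) must be rechecked with $k$-dependent constants is exactly the caveat the paper leaves implicit.
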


  Note that this function will not satisfy the stronger requirements in
   Theorem \ref{Thm:RealizationTracts} for a fixed $M$ (we need to
   let $M$ tend to $1$ as $k\to\infty$). 
   So we will not be able to use Proposition
   \ref{prop:approximation2} to obtain an entire function from $F$. (Also, 
   an application of Proposition \ref{prop:approximation2} 
   would slightly increase
   the lower order.) 
   We believe that it should be possible to modify 
   Proposition~\ref{prop:approximation2}
   so as to construct an entire function of class $\B$ with these
   properties.

\subsection*{No nontrivial path components}
 To conclude, we would like to note that our construction can also be
  adapted to yield a topologically
  stronger form of the counterexample. We content ourselves
  with giving a sketch of the proof, which involves a non-trivial amount of
  bookkeeping but is not conceptionally more involved than the previous
  arguments. 

 \begin{thm}[No Nontrivial Paths in the Julia Set] 
   \label{thm:pointcomponents}
  There exists a (hyperbolic) function $f\in\B$ such that
   $J(f)\cup\{\infty\}$ is a compact connected set that contains
   no nontrivial curve. 
 \end{thm}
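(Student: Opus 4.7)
The plan is to strengthen the counterexample of Theorem~\ref{thm:counterexample} by introducing wiggles at a hierarchy of nested scales within the tract. The mechanism that rules out curves to infinity in Theorem~\ref{Thm:NoCurveToInfinity} (the doubling of subcurves crossing fixed geodesics) can then be iterated across arbitrarily many scales, ruling out not merely unbounded arcs but arcs of any positive diameter.

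First, I would modify the construction of the tract $T$ from Theorem~\ref{Thm:RealizationTracts}. Rather than having wiggles only at the scales $\rho_k,\sep\rho_k$, I would insert, between each pair of consecutive outer wiggles, a finite collection of nested sub-wiggles at intermediate scales $\rho_k^{(1)}<\sep\rho_k^{(1)}<\rho_k^{(2)}<\dots<\rho_k^{(N_k)}$, with $N_k\to\infty$. The geometric constraints (\ref{Item:RhoEstimates}')--(h') must be re-verified at every scale; this is the bookkeeping the authors refer to, but conceptually it only amounts to choosing the radii in an appropriately iterated exponential pattern, as in (\ref{eqn:rhokestimate}) and (\ref{eqn:seprhokestimate}), at each nesting level. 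Proposition~\ref{prop:approximation2} then realizes the modified tract as that of an entire function $f\in\B$, and the uniform bound on the diameters of the vertical geodesics of the approximating logarithmic transform preserves all wiggle scales.

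Second, I would rule out nontrivial arcs. Suppose for contradiction that $\gamma\subset J_{\s}$ is an arc with distinct endpoints $z,z'$. By Lemma~\ref{lem:realseparation}, $\max(\re F^{\circ k}(z),\re F^{\circ k}(z'))\to\infty$, so for every $N$ there exists $k$ such that $F^{\circ k}(\gamma)$ reaches real parts exceeding $\sep\rho_N$; since $\gamma$ is connected and lies in a single tract, $F^{\circ k}(\gamma)$ must cross every geodesic $C_j,\sep C_j$ with $j\leq N$ lying in its real-part range. The doubling argument of Theorem~\ref{Thm:NoCurveToInfinity}, applied at each of the $N$ successive wiggle scales traversed by $F^{\circ k}(\gamma)$ and then pulled back through $k$ iterates, forces $\gamma$ itself to contain $2^N$ pairwise disjoint subarcs connecting two fixed vertical geodesics $\Csep{m+1}{0}$ and $\Csepp{m+1}{0}$, whose real parts have a positive gap independent of $N$. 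Because $\gamma$ is the image of a compact interval under a uniformly continuous map, the minimum parameter length of the $2^N$ disjoint subarcs tends to zero, contradicting the lower bound on the Euclidean diameter of each subarc imposed by the separation of the two geodesics.

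Third, for connectedness of $J(f)\cup\{\infty\}$, observe that in the construction of Theorem~\ref{thm:counterexample} the tract satisfies $\cl{T}\subset\H$ and $f(B_1(0))\Subset B_1(0)$, so $f$ is hyperbolic and the Fatou set consists of the (completely invariant) attracting basin of a fixed point near $0$, which is simply connected since it contains $S(f)$ and has no other singular values separating it. Hence its complement $J(f)\cup\{\infty\}$ in $\Ch$ is connected. One may also verify this directly from Theorem~\ref{thm:ripponstallard}: every $z_0\in J^K(F)$ lies within distance $2\pi$ of an unbounded continuum in $J_{\s}$, and the union of these continua with the point $\infty$ is a connected set whose closure in $\Ch$ contains all of $J(f)\cup\{\infty\}$.

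The main obstacle is the bookkeeping in the first step: one must choose the nested radii $\rho_k^{(j)}$ so that at every nesting level the Gr\"otzsch modulus estimate, the bounded-slope condition, and the length estimates used to compare $\rho_k^{(j)}$ with $\re z$ all remain valid, while still keeping the growth of $f$ controlled enough that Proposition~\ref{prop:approximation2} applies. The iterated doubling argument in the second step is conceptually parallel to the proof of Theorem~\ref{Thm:NoCurveToInfinity}, but care is needed to track how many subarcs are produced through pull-backs crossing between nested scales in different tracts, which is where the bulk of the remaining bookkeeping lies.
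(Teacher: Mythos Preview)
Your proposal has a genuine gap: you are missing the key dynamical ingredient that replaces the ``curve goes to $\infty$'' hypothesis of Theorem~\ref{Thm:NoCurveToInfinity}. Recall that in that proof the doubling is driven by \emph{iteration}: one pull-back step produces one doubling, because the geodesics $C_{m+k}^{k-1},\sep C_{m+k}^{k-1}$ hit by $F^{\circ(k-1)}(\gamma)$ are, by construction, positioned relative to the semicircles $R_{m+k},\sep R_{m+k}$ so that the next pull-back again lands on a wiggle. This works for arbitrarily many iterates only because $\gamma$ reaches $\infty$, so every $F^{\circ k}(\gamma)$ crosses every semicircle. For a bounded arc $\gamma$, once you have fixed the iterate $k$ at which $\re F^{\circ k}(z)$ and $\re F^{\circ k}(z')$ separate, you can pull back only $k$ times; you cannot let $N\to\infty$ while keeping the $2^N$ subarcs pinned between the \emph{same} pair of geodesics $\Csep{m+1}{0},\Csepp{m+1}{0}$. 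Inserting $N_k$ extra wiggles between $\sep\rho_k$ and $\rho_{k+1}$ does not help: a curve traversing $N$ \emph{consecutive} wiggles inside a single tract piece picks up only linearly many (order $N$) disjoint crossings, not $2^N$, because there is no mechanism multiplying crossings from one sub-wiggle to the next. Your sentence ``applied at each of the $N$ successive wiggle scales \dots\ and then pulled back through $k$ iterates'' conflates these two unrelated counts.

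What the paper actually does is different. It builds a tract with wiggles at \emph{every} scale (for each $\eta\geq 1$ there is a wiggle $(r,R)$ with $\eta\leq r\leq R\leq 3\eta$), together with a \emph{propagation} property: whenever a minimal subcurve of the tract crosses a wiggle $(r,R)$, its $F$-image crosses some other wiggle $(r',R')\in\Wiggle$, with the endpoints landing outside $[r'/2,2R']$ and the middle landing inside $[r',R']$. Given distinct $z,w\in J_{\s}$, Lemma~\ref{lem:linearseparation} yields an iterate with $\re F^{\circ k}(z)/\re F^{\circ k}(w)>12$, so (by density of wiggles) $F^{\circ k}(\gamma)$ straddles some wiggle $(r,R)$. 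Propagation then lets one iterate \emph{forward} indefinitely, and an induction as in Theorem~\ref{Thm:NoCurveToInfinity} forces $F^{\circ k}(\gamma)$ to cross the fixed interval $[r,R]$ at least $3^n$ times for every $n$ --- the contradiction. The construction of such a tract is carried out inductively in pieces $T_1,T_2,\dots$, where at each stage the image wiggles required by propagation are added to the next piece; consequently the number of wiggles (and the thinness of the tract) grows without bound as $\re z\to\infty$. Your treatment of connectedness of $J(f)\cup\{\infty\}$ via the disjoint-type/single-basin structure is fine, but the core combinatorial mechanism needs to be replaced by this forward-propagation idea.
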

 \sketch 
  Again, the result will be 
  established by designing a function $F\in\Blog$ with a single tract
  $T\subset \H$
  whose Julia set contains no nontrivial curve; the existence of an
  entire function with the same property is easily obtained using
  Proposition \ref{prop:approximation2}. 
  (Recall that $J(f)\cup\{\infty\}$
  is always a compact connected set when $f\in\B$, so only the
  second part of the claim needs to be established.)
 
  Let us say that a tract $T$ has a \emph{wiggle over $(r,R)$} if any curve
  in $T$ that connects a point at real part $r/2$ to 
  one at real part at least $2R$ contains at least three disjoint
  subcurves connecting the real parts $r$ and $R$. 

 \begin{figure}
\begin{center}
 \includegraphics[width=\textwidth]{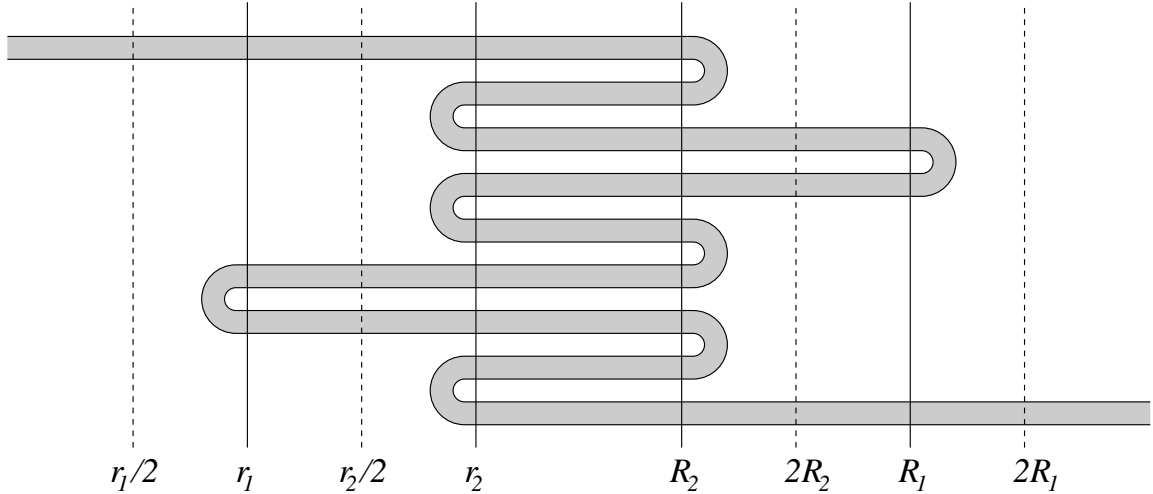}
\end{center}
  \caption{Illustration of the proof of Theorem
    \ref{thm:pointcomponents}. 
    The tract pictured here has a wiggle over
    $(r_1,R_1)$ and over $(r_2,R_2)$.\label{fig:manywiggles}}
 \end{figure}

 Our aim is now to construct a tract $T$, a conformal map $F:T\to\H$, and
  an associated set $\Wiggle$ of wiggles $(r,R)$ such that
  \begin{itemize}
   \item[1.] $T$ has a wiggle over $(r,R)$ for every $(r,R)\in\Wiggle$.
   \item[2.] For every $\eta\in [1,\infty)$, there is some wiggle 
     $(r,R)\in \Wiggle$ 
     with $\eta\leq r \leq R \leq 3\eta$. 
   \item[3.] 
    Every wiggle $(r,R)\in\Wiggle$ ``propagates'', roughly in the sense
     that curves connecting real parts $r$ and $R$ are going to map
     to an ``image wiggle'' $(r',R')\in\Wiggle$. 

 More precisely,
     suppose that $\gamma:[0,1]\to T$ connects real parts $r/2$ and $2R$,
     and $\re(\gamma(t))\in(r/2,2R)$ for all $t\in(0,1)$. Let us suppose
     without loss of generality that $|F(\gamma(0))|<|F(\gamma(1))|$. Then
     there should be $(r',R')\in \Wiggle$ such that, for every $t\in(0,1)$ with $\re \gamma(t)\in (r,R)$, we have
     \[ |F(\gamma(0))|< r'/2 < r' < |F(\gamma(t))| < R' < 2R' <
        |F(\gamma(1))|. \]
  \end{itemize}
  By linear separation of
    real parts (Lemma \ref{lem:realseparation}), for any two points
    $z,w\in J(F)$
    with the same external 
    address, there is 
    an iterate $F^k$ so that 
    $\re F^k(z)/\re F^k(w) > 12$ (assuming without loss of generality that
    $\re F^k(z)>\re F^k(w)$).
    So, by 2., there is a wiggle $(r,R)\in\Wiggle$ such that
     \[ \re F^k(z) < r/2 < 2R < \re F^k(w). \]
   The condition in 3.\ will then guarantee,
    by an inductive argument as in Theorem \ref{Thm:NoCurveToInfinity},
    that any curve in $J(F)$ connecting $F^k(z)$ and $F^k(w)$ would need to
    connect real parts $r$ and $R$ at least $3^n$ times for every $n$, 
    which is impossible.

 To complete our sketch, 
  we now indicate how to construct such a tract $T$, which will be
  a winding 
  strip contained in $\{|\im z|<\pi\}$, similarly as before. 
  However, the number
  of times that $T$ crosses the line $\{\re z = R\}$ will tend to
  infinity as $R$ does, so the width of $T$ will necessarily
  tend to $0$
  as real parts increase. Similarly as in Theorem
  \ref{Thm:RealizationTracts}, the tract will be constructed by inductively
  defining pieces $T_1, T_2, \dots$, in the following fashion:
  \begin{enumerate}
   \item $T_j$ is the piece of $T$ between real parts
     $\eta_{j-1}$ and $\eta_j$, where $\eta_0<\eta_1<\eta_2<\dots$ is
     a sequence tending to infinity.
   \item At each step in the construction, there is a set
     $\Wiggle_k = \{(r^k_1,R^k_1),\dots,(r^k_{m_k},R^k_{m_k})\}$
     of ``wiggles'', with
     $r^k_j/2 \geq \eta_{k-1}$ and $2 R^k_j \leq \eta_{k}$. 
     $T_k$ is constructed to have a wiggle over each
     $(r,R)\in\Wiggle_k$ (see Figure \ref{fig:manywiggles}).
   \item The next set of wiggles $\Wiggle_{k+1}$ is determined by the construction
     of $T_{k}$.
  \end{enumerate}

  More precisely, we begin by setting $\eta_0 := 1$,
    $\Wiggle_1 := \{(r_1,r_1+A)\}$, where $A$ is
    a sufficiently large number (fixed for the whole construction),
    and $r_1$ is large enough. We also set $\eta_1 := 2(r_1+A)>3\eta_0$.

   Given $\Wiggle_k$, we construct a piece $T_k$, connecting
    real parts $\eta_{k-1}$ and $\eta_k$, by first constructing
    a ``central curve'' that has a wiggle over every $(r,R)\in \Wiggle_k$
    (this is easy to achieve, compare Figure
     \ref{fig:manywiggles}),
    and then thickening this curve slightly (see below) to obtain $T_k$. 

     We then construct the set $\Wiggle_{k+1}$ as follows.
     Suppose that $(r,R)\in\Wiggle_k$, and that $\gamma:[0,1]\to T_k$ 
     is a minimal piece of
     the central curve of $T_k$ that connects real parts
     $r/2$ and $2R$. (Note that there may be several such pieces; we will add
     a wiggle to $\Wiggle_{k+1}$ for each of them.)

    Let $z$ be the first point on $\gamma$ that has
     real part $r$, and let $Z$ be the last point on $\gamma$ that has
     real part $R$.  Using the
     semi-hyperbolic metric, i.e.\ the reciprocal of the distance to 
     $\partial T_k$, we can estimate $|F(z)|$ and
     $|F(Z)|$ (up to an exponent of $2$), independently of
     the construction of $T_{K}$ for $K >k$. Hence we can add
     a new wiggle $(r^{k+1}_j,R^{k+1}_j)$ to $\Wiggle_{k+1}$ such that 
     $|F(z)|\geq r^{k+1}_j+A$ and
     $|F(Z)|\leq R^{k+1}_j-A$. 

    If the width of $T_j$ along $\gamma$ was chosen sufficiently
     thin, we can easily ensure that
     $r^{k+1}_j/2 > |F(\gamma(0))| > \eta_k$, and that
     $2R^{k+1}_j < |F(\gamma(1))|$. 

    Having added these finitely many wiggles to
     $\Wiggle_{k+1}$, we set $\eta_{k+1} := \max_{(r,R)\in \Wiggle_{k+1}} 2R$.
     Finally, we add sufficiently many wiggles 
     of the form $(t,t+A)$ to $\Wiggle_{k+1}$ to ensure that, for every 
     $\eta\in [\eta_{k},\eta_{k+1}/3]$, there is some wiggle between 
     real parts $\eta$ and $3\eta$. This completes the description of
     the inductive construction. \end{proof}

\appendix

\section{Some Geometric and Topological Facts}

 In this section, we collect some of the simple
  geometrical and topological results that we
  required in the course of the article. The first is a version of the
  \emph{Ahlfors spiral theorem} \cite[Theorem 8.21]{haymansubharmonic2}
  (which
  states that any entire function of finite order has
  controlled spiralling). We give a simple proof of this
  fact for functions in class $\Blog$ below. 
  In Section \ref{sec:growth}, we also required a characterization of
  domains with bounded wiggling, which we prove here for completeness.  
  Lemma \ref{Lem:GeometryGeodesic} below was used in 
  Theorem \ref{Thm:RealizationTracts}. 

 Finally, the Boundary Bumping Theorem \ref{thm:boundarybumping} 
  was used a number of times in topological considerations, and
  Theorem \ref{thm:arccharacterization} was instrumental in the 
  proof of Theorem \ref{thm:positive}.

 \begin{thm}[Spiral Theorem] \label{thm:spiral}
  Suppose that $F\in\Blog$ has finite order. Then the tracts
   of $F$ have bounded slope.
 \end{thm}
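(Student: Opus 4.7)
My plan is to exploit the conformal isomorphism $f := F_T^{-1}: \H \to T$ (after a harmless normalization ensuring $H = \H$), combine Koebe distortion with the $2\pi i$--disjointness of tracts to get a sharp control on $|f'|$, and then use the finite-order hypothesis to convert this into a slope bound. The intuition is simple: finite order says that the conformal map $F$ grows at most exponentially in $\re z$, while the $2\pi i$--separation of tracts forces the preimage $f$ to have only logarithmic growth along appropriate paths; matching these two statements pins down the geometry of $T$.

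First I would observe that, because $T$ and its $2\pi i$-translates have disjoint closures, every point $z \in T$ satisfies $\dist(z,\partial T) \leq 2\pi$ (the segment from $z$ to $z+2\pi i$ must exit $T$). Applying Koebe's $1/4$-theorem to $f$ therefore yields
\[
   |f'(w)| \;\leq\; \frac{4\,\dist(f(w),\partial T)}{\re w} \;\leq\; \frac{8\pi}{\re w}\quad(w\in\H).
\]
Integrating this estimate along the positive real ray gives $|f(t)-f(1)| \leq 8\pi \log t$ for $t \geq 1$. On the other hand, finite order applied at $z = f(t)$ reads $\log t = \log\re F(f(t)) \leq \rho\,\re f(t) + M_0$, so that $|f(t) - f(1)| \leq 8\pi\rho\,\re f(t) + O(1)$. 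Thus the ``central curve'' $\{f(t) : t \geq 1\} \subset T$ already satisfies a bounded-slope estimate.

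To extend this to an arbitrary $z = f(w) \in T$ with $w = re^{i\theta}$, $\theta \in (-\pi/2,\pi/2)$, I would use the hyperbolic metric: since $\dist_\H(r, re^{i\theta}) = \operatorname{arccosh}(\sec\theta)$ depends only on $\theta$, and since the standard estimate combined with $\dist(\cdot,\partial T)\leq 2\pi$ yields $\lambda_T \geq 1/(4\pi)$, we obtain $|z - f(r)| \leq 4\pi\operatorname{arccosh}(\sec\theta)$. The easy regime is $\theta$ bounded away from $\pm\pi/2$, where this displacement is $O(1)$ and bounded slope follows at once from the previous paragraph applied to $f(r)$.

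The main obstacle is the regime $|\theta| \to \pi/2$ (equivalently $\re w$ small compared to $|w|$), where the hyperbolic-distance bound degenerates. To handle it I would integrate the Koebe estimate along the hybrid path consisting of the positive real segment from $1$ to $|w|$ followed by the circular arc $|w|e^{i\phi}$, $\phi \in [0,\theta]$; the arc contributes $\int_0^\theta 8\pi/\cos\phi\,d\phi = 8\pi \log|\sec\theta + \tan\theta|$, which combines with the ray contribution to give
\[
   |f(w) - f(1)| \;\leq\; 8\pi\log|w| + 8\pi\log(|w|/\re w) + O(1).
\]
Feeding this together with the finite-order lower bound $\re f(w) \geq (\log\re w - M_0)/\rho$ into a short case analysis (treating $|w| \leq \re w^{1/2}$ versus $|w|$ much larger than $\re w$, and using that in the latter regime one may compare $f(w)$ to the nearest boundary point of $T$ at infinity via Koebe in a small hyperbolic disc) completes the proof with explicit constants $\alpha, \beta$ depending only on $\rho$ and $M_0$. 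The delicate balancing of the two $\log$-terms against the single $\rho\,\re z$ bound is where the finite-order hypothesis is really used, and it is also where one sees why the theorem must fail at infinite order.
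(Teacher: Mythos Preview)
Your first two paragraphs already contain a complete proof, and it is essentially identical to the paper's: you show via Koebe (equivalently, via the standard estimate on the hyperbolic metric of $T$) that the central geodesic $\gamma(t)=F_T^{-1}(t)$ satisfies $|\gamma(t)-\gamma(1)|=O(\log t)=O(\re\gamma(t))$. The paper stops right there, because by the Remark following Definition~\ref{defn:boundedslope} the existence of \emph{one} asymptotic curve with $\limsup|\im\gamma(t)|/\re\gamma(t)<\infty$ in one tract is already equivalent to the full bounded-slope condition for $F$. (Briefly: $T$ is connected, contains $\gamma$, and is disjoint from the $2\pi i$-translates $\gamma\pm 2\pi i$, so $T$ is trapped between two curves of bounded slope.)

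Your third and fourth paragraphs attempt to extend the estimate pointwise to every $z\in T$, which is therefore unnecessary. That extension is also left incomplete: the case ``$|w|\le(\re w)^{1/2}$'' is vacuous since $|w|\ge\re w$ always, and the remaining ``$|w|$ much larger than $\re w$'' case is only sketched (``compare $f(w)$ to the nearest boundary point of $T$ at infinity via Koebe in a small hyperbolic disc'' is not yet an argument). None of this matters for correctness, since the central-curve estimate already suffices; but you should either delete that part or recognise that the Remark does the work for you.
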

 \begin{proof} Let $T$ be a tract of $F$,
   set $\rho := \sup\{\frac{\log \re F(z)}{\re z}:z\in\H\cap\T\}<\infty$,
   and consider the central geodesic 
   $\gamma:[1,\infty)\to T; t \mapsto \FInv{T}(t)$.
   Then for every $t\geq 1$,
\[ 
	|\gamma(t)| - |\gamma(1)| \leq |\gamma(t)-\gamma(1)| \leq
       2\pi      \ell_{T}\bigl(\gamma\bigl([1,t]\bigr)\bigr) =
           2\pi\log t \leq 2\pi \rho\re \gamma(t)\;.
 \]
Thus we have proved the existence of an asymptotic curve $\gamma$
satisfying
$|\im\gamma(t)|\leq |\gamma(t)|\leq K\re\gamma(t)+M$, for $K=2\pi\rho$ and $M=|\gamma(1)|$, which is equivalent to the bounded slope condition. 
\end{proof}

\begin{lem}[Domains with bounded wiggling]
\label{lem:boundedwiggling}
 Let $V$ be an unbounded Jordan domain such that
  $\exp|_V$ is injective.
  Suppose that there are 
  $K,M>0$ such that every $z_0\in V$ can be connected to $\infty$ by
  a curve $\gamma\subset V$ satisfying 
   \[
     \re z \geq \frac{\re z_0}{K} - M \]
  for all $z\in\gamma$. Then there is $M'>0$ that depends only
  on $M$  such that, for every $z_0\in V$,
   \[
     \re z \geq \frac{\re z_0}{K} - M' \]
   for all $z$ on the
   geodesic connecting $z_0$ to $\infty$.    
\end{lem}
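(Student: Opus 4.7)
Plan:
After a vertical translation, I can use $\exp|_V$ injective to arrange $V\subset\{|\im z|<\pi\}$, so $\dist(z,\partial V)\le\pi$ for all $z\in V$, and the standard estimate (\ref{eqn:standardestimate}) gives $\lambda_V(z)\ge 1/(2\pi)$. Thus the Euclidean length of any curve in $V$ is at most $2\pi$ times its hyperbolic length. Fix $z_0\in V$, set $c:=\re z_0/K - M$, let $\gamma$ be the hypothesis curve with $\re\gamma\ge c$, and let $\alpha\colon[0,\infty)\to V$ be the hyperbolic geodesic from $z_0$ to $\infty$. I would argue by contradiction: assume that for arbitrarily large $M'$ there is some $z_0\in V$ and $t_0$ with $\re\alpha(t_0)<c-M'$.

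Since $\re z_0>c$ and $\re\alpha(t)\to+\infty$ (because $V$ extends to $\infty$ through its single boundary prime end at $\infty$ and has bounded height), there exist $t_1\le t_0\le t_2$ with $\re\alpha(t_1)=\re\alpha(t_2)=c$ and $\re\alpha\le c$ on $[t_1,t_2]$. The subarc $\alpha|_{[t_1,t_2]}$ therefore has Euclidean length at least $2(c-\re\alpha(t_0))>2M'$, so its hyperbolic length (which equals $t_2-t_1=\dist_V(\alpha(t_1),\alpha(t_2))$, as $\alpha$ is the geodesic) exceeds $M'/\pi$.

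Next I would construct a competing curve $\beta$ in $V$ from $\alpha(t_1)$ to $\alpha(t_2)$ whose hyperbolic length stays bounded independently of $M'$, contradicting the geodesic property for $M'$ large. Apply the hypothesis at $\alpha(t_1)$ and $\alpha(t_2)$ (each having $\re=c$) to obtain curves $\gamma_1,\gamma_2\subset V$ to $\infty$ with $\re\ge c/K - M$. Since $V$ is a Jordan domain whose only boundary point at $\infty$ is a single prime end, for all sufficiently large $X$ the set $V\cap\{\re z>X\}$ has a unique unbounded component $U_X$ of imaginary-part extent at most $2\pi$; hence the first crossings of $\gamma_1,\gamma_2$ with $\{\re z = X\}$ both lie in $U_X$ and can be joined by a transverse segment of Euclidean length at most $2\pi$, yielding $\beta$.

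The main obstacle is bounding the hyperbolic length of the portions of $\gamma_1,\gamma_2$ used in $\beta$: a priori $\gamma_i$ could wiggle arbitrarily, so its hyperbolic length between real parts $c$ and $X$ need not be small. I would overcome this by choosing $X=c+O(1)$ and working via the Riemann map $\psi:V\to\H$ with $\psi(\infty)=\infty$: the geodesic $\alpha$ corresponds to a horizontal ray in $\H$, and $\psi^{-1}$ is a conformal map to which Koebe distortion applies. Combining Koebe distortion with the bounded-height condition $\lambda_V\ge 1/(2\pi)$, the shortest arc in $V$ connecting two prescribed boundary points of a bounded subdomain of $V\cap\{\re z\ge c\}$ has hyperbolic length controlled by a universal constant depending only on the strip width $2\pi$. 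This bounds $\ell_V(\beta)$ by some $C_0$ depending only on $M$ and $2\pi$, and taking $M':=\pi C_0$ produces the desired contradiction, establishing the lemma with $M'$ depending only on $M$ (and the universal strip width).
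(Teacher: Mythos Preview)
Your contradiction strategy runs cleanly until the construction of the competing curve $\beta$, but the step you flag as ``the main obstacle'' is a genuine gap that your proposed fix does not close. The curves $\gamma_1,\gamma_2$ obtained from the hypothesis at $\alpha(t_1)$ and $\alpha(t_2)$ carry only a lower bound on their real parts; they have no length control whatsoever, and even when restricted to $\{c\le\re z\le X\}$ with $X=c+O(1)$ they may oscillate arbitrarily within the strip and accumulate unbounded hyperbolic length. Your appeal to ``Koebe distortion'' and ``the shortest arc \dots of a bounded subdomain of $V\cap\{\re z\ge c\}$'' is not an argument: you never say which subdomain or which two points, and Koebe's theorem is a local distortion estimate, not a bound on the hyperbolic length of an unspecified arc. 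What would actually be needed at this point is a Gehring--Hayman-type estimate controlling the geometry of the geodesic, and invoking such an estimate implicitly is close to assuming what the lemma asserts.

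The paper's proof is direct and short, and avoids competing curves altogether. Fix $z$ on the geodesic. By \cite[Corollary~4.18]{pommerenke} one can find a crosscut of $V$ through $z$ whose Euclidean diameter is at most a universal constant $C$ times $\dist(z,\partial V)\le 2\pi$. This crosscut separates $z_0$ from $\infty$ in $V$, so the hypothesis curve $\gamma$ from $z_0$ (which satisfies $\re\gamma\ge \re z_0/K-M$) must meet it at some point $w$; hence $\re z\ge\re w-4C\pi\ge \re z_0/K-M-4C\pi$, giving $M'=M+4C\pi$. The crosscut estimate does all the work; there is no need to bound the length of any auxiliary curve.
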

\begin{proof}
  Let $z_0\in V$, let $\gamma$ be a curve as in the statement
  of the theorem, and let
  $F:V\to\H$ be a conformal isomorphism with $F(\infty)=\infty$ and
  $F(z_0)=1$. Then
   \[ \gamma' := F^{-1}\bigl([1,\infty)\bigr) \]
  is the horizontal geodesic connecting $z_0$ to $\infty$. 

  Let $z\in \gamma'$. 
   By \cite[Corollary 4.18]{pommerenke}, we can find geodesics
   $\alpha^+$ and $\alpha^-$ of $\H$,
   connecting $F(z)$ to the positive resp.\
   negative imaginary axis, such that the geodesics
   $F^{-1}(\alpha^{\pm})$ of $V$ have diameter at most
   $C\dist(z,\partial V)$. (Here $C$ is a universal constant.) 
   Hence the crosscut $\alpha := F^{-1}(\alpha^+)\cup F^{-1}(\alpha^-)$, 
   which separates $z_0$ from $\infty$ in $V$, has diameter 
   at most $2C\dist(z,\partial V) \leq 4C\pi$. 

  The curve $\gamma$ must intersect $\alpha$ in some point
   $w$. We thus have
   \[ \re z \geq \re w - 4C\pi \geq \re z_0/K - M - 4C\pi\;.
      \qedhere \]
\end{proof}

\begin{figure}
  \resizebox{.8\textwidth}{!}{\input{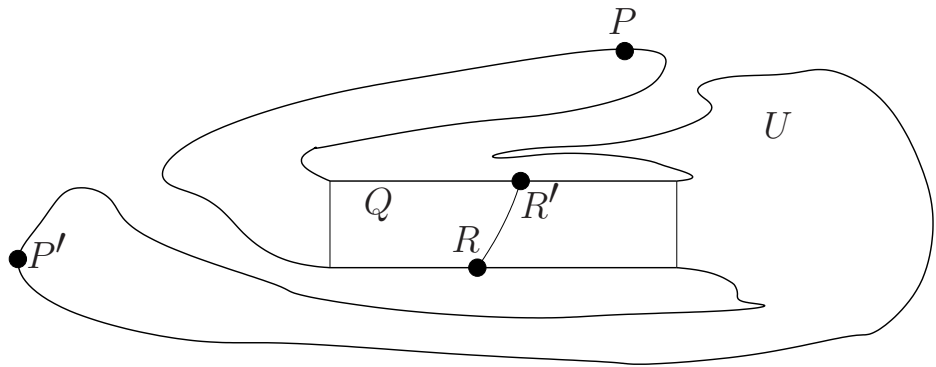tex}}
 \caption{Illustration of Lemma \ref{Lem:GeometryGeodesic}.}
\end{figure}

\begin{lem}[Geometry of Geodesics]
\label{Lem:GeometryGeodesic}
Consider the rectangle $Q=\{z\in\C\colon |\re z|<4, |\im z|<1\}$ and let
 $U\subset \Ch$ be a simply connected Jordan domain with 
 $U\supsetneq Q$ such 
 that $\partial Q\cap\partial U$ consists  exactly of
 the two horizontal boundary sides of $Q$. Let $P,R,P',R'\in\partial U$ be 
 four distinct boundary points in this cyclic order, 
 subject to the condition that $P$ and $P'$ are in the boundary of different 
 components of $U\sm Q$, and so that the quadrilateral $U$ with the marked 
 points $P,R,P', R'$ has modulus $1$.

Let $\gamma$ be the hyperbolic geodesic in $U$ connecting $R$ with $R'$. If 
 $0\in\gamma$, then the two endpoints of $\gamma$ are on the horizontal 
   boundaries of $Q$, one endpoint each on the upper and lower boundary.
\end{lem}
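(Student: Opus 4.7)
The natural tool is the conformal equivalence of the modulus-$1$ quadrilateral $(U;P,R,P',R')$ with the unit square, combined with an analysis of the image of the sub-rectangle $Q$. Let $\phi\colon U\to S:=(0,1)^2$ be the unique conformal isomorphism sending $P,R,P',R'$ to the corners $(0,0),(1,0),(1,1),(0,1)$ (which exists because the modulus equals $1$, and extends homeomorphically to prime ends). Under $\phi$ the geodesic $\gamma$ maps to the hyperbolic geodesic $\gamma_S$ of $S$ from $(1,0)$ to $(0,1)$, and the hypothesis $0\in\gamma$ becomes $\phi(0)\in\gamma_S$. Both isometries of $S$ that swap the endpoints of $\gamma_S$, namely $(x,y)\mapsto(y,x)$ (fixing the line $\{x=y\}$) and $(x,y)\mapsto(1-x,1-y)$ (fixing only $(1/2,1/2)$), leave $\gamma_S$ invariant; their common fixed set is $\{(1/2,1/2)\}$, which must therefore lie on $\gamma_S$.

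I would then track the image of $Q$. Writing $A,C$ for the horizontal edges of $Q$ and $V_L,V_R$ for the vertical ones, the assumption $\partial Q\cap\partial U=A\cup C$ implies that $\phi(Q)\subset S$ is a Jordan sub-domain whose boundary meets $\partial S$ in precisely $\phi(A)\cup\phi(C)$, with $\phi(V_L),\phi(V_R)$ appearing as crosscuts of $S$. Conformal invariance transfers the moduli of $Q$: as a quadrilateral with these four arcs, $\phi(Q)$ has modulus $4$ from $\phi(V_L)$ to $\phi(V_R)$ (equivalently $1/4$ from $\phi(A)$ to $\phi(C)$). Hence $\phi(Q)$ is a thin, elongated sub-quadrilateral of $S$, with $\phi(0)$ its conformal center, lying at the intersection of the two hyperbolic geodesics of $\phi(Q)$ joining the midpoints of opposite sides.

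A bookkeeping dichotomy then locates $\phi(A)$ on $\partial S$. Let $B_L,B_R$ denote the parts of $\partial U$ bounding the two components of $U\setminus Q$, and assume $P\in B_R$, $P'\in B_L$. If $R\in A$, then $\phi(R)=(1,0)$ lies on $\phi(A)$, so $\phi(A)$ straddles the corner $(1,0)$, with one sub-arc on the bottom and one on the right edge of $S$. If instead $R$ lies in $B_L\cup B_R$, then the whole arc $A$ is contained in a single one of the four arcs of $\partial U$ cut by $P,R,P',R'$, and $\phi(A)$ is contained in a single edge of $\partial S$, not straddling any corner. The analogous dichotomy holds for $\phi(C)$ and $R'$.

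The final step shows that $\phi(0)\in\gamma_S$ forces the straddling alternative in both dichotomies, so that $R\in A$ and $R'\in C$ (or vice versa). Geometrically, the thin modulus-$4$ quadrilateral $\phi(Q)$ behaves like a narrow neighborhood of a single hyperbolic geodesic of $S$ joining $\phi(A)$ to $\phi(C)$, on which its conformal midpoint $\phi(0)$ lies; for this midpoint to also lie on $\gamma_S$, that connecting geodesic must essentially coincide with $\gamma_S$, which forces $\phi(A),\phi(C)$ to cluster around the endpoints $(1,0),(0,1)$ of $\gamma_S$, compatible only with the straddling branches. I expect this alignment step to be the main technical obstacle. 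The cleanest route is a direct extremal length comparison: under a non-straddling configuration, the large modulus $4$ of $\phi(Q)$ together with the standard estimate~\eqref{eqn:standardestimate} forces a strictly positive lower bound on the hyperbolic distance from $\phi(0)$ to $\gamma_S$ in $S$, contradicting $\phi(0)\in\gamma_S$.
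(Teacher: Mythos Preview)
Your reduction to the unit square and the case dichotomy on where $R,R'$ land are correct, but the ``alignment step'' you flag is a genuine gap, and the proposed fix is not sufficient. The claim that a non-straddling configuration forces a strictly positive lower bound on $\dist_S(\phi(0),\gamma_S)$ does not follow from the modulus of $\phi(Q)$ together with the standard estimate: as $\phi(A)$ and $\phi(C)$ approach (but avoid) the corners $(1,0)$ and $(0,1)$, the long quadrilateral $\phi(Q)$ can track $\gamma_S$ arbitrarily closely while still having modulus exactly $4$, so there is no uniform separation to appeal to. Moreover, $\phi(0)$ is the centre of $\phi(Q)$ for the hyperbolic metric of $\phi(Q)$, not of $S$, so the ``$\phi(0)$ lies on a geodesic close to $\gamma_S$'' heuristic does not translate into a distance bound in $S$ without further work. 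Any rescue would require a quantitative input beyond what you invoke.

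The paper avoids the case analysis entirely by reframing the problem: instead of locating $R,R'$ on $\partial U$, it shows directly that $\gamma$ cannot cross the left vertical side $L$ of $Q$ (the right side is symmetric), which immediately forces $R,R'\in A\cup C$. In the strip model $\phi:U\to\{0<\im z<\pi\}$ with $P,P'\mapsto\mp\infty$ and $R,R'\mapsto\pi i,0$, the geodesic $\phi(\gamma)$ is the segment $[0,\pi i]$, so it suffices to show $l:=\sup_L\re\phi\le 0$. Taking $Q'$ to be the half of $Q$ between $L$ and the vertical crosscut $M$ through $0$, one reflects $\phi(Q')$ in the real axis and exponentiates to obtain an annulus around $0$ of modulus at least $1/2$; its inner boundary contains a point of modulus $e^l$ and its outer boundary a point of modulus $e^m$, where $m:=\inf_M\re\phi\le 0$ since $\phi(0)\in i\R$. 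Teichm\"uller's modulus theorem then forces $l\le m\le 0$. This is both shorter and sharper than your route: your square-model argument is missing precisely the one hard inequality (Teichm\"uller) that does the real work.
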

\begin{remark}
 This is essentially a simple version of the well-known
  \emph{Ahlfors distortion theorem} (see e.g.\ 
  \cite[Section 4.12]{ahlforsconformal} or 
  \cite[Section 11.5]{pommerenke}). However, in the way it is usually
  stated, this theorem 
  cannot be applied directly to obtain our lemma. Hence we provide
  the proof for completeness, following 
  \cite[Section 4.12]{ahlforsconformal}.
\end{remark}
\begin{proof}
 We need to show that $\gamma$ 
 does not cross the left side $L$ or the right side $R$ of the rectangle $Q$. 
 We show this for the left side; the statement for the right side follows by 
 symmetry. 
 Let $M$ be the vertical crosscut of $Q$ that passes through $0$, 
 and let $Q'$ be the square bounded by $L$, $M$ and the horizontal boundaries 
 of $Q$. (That is, $Q'$ is the ``left half'' of $Q$.)

  Let $\phi$ be the conformal map that takes $U$ to the bi-infinite strip 
  $\{0<\im z < \pi\}$ in such a way that $P$ and $P'$ are mapped to 
  $-\infty$ and $+\infty$ and $R$ and $R'$ are mapped to $\pi i$ and $0$. Set
    \[ l := \sup_{z\in L} \re \phi(z)
    \quad\text{and}\quad m := \inf_{z\in M} \re \phi(z) \leq 0; \]
  we need to show that $l \leq 0$.

 Let $\wt{Q'}$ be the quadrilateral obtained by joining $\phi(Q')$ with its
  reflection in 
 the real axis; then $\wt{Q'}$ has modulus equal to $1/2$. The exponential map 
 takes $\wt{Q'}$ together with its upper and lower boundaries to an annulus 
 surrounding $0$ that also has modulus $1/2$. 

 The following is a consequence of the 
  modulus theorem of Teichmueller (see e.g.\
  \cite[Theorem 4.7]{ahlforsconformal} and the subsequent remark):
 \emph{If the annulus $A$ separates the points $0$ and $z$ from the points
   $\infty$ and $w$, where $|w|<|z|$, then $\mod(A)<1/2$.}

 This implies that $l\leq m$, as required.
\end{proof}

We conclude by stating two results of continuum theory that are used in this
 article.

\begin{thm}[Boundary Bumping theorem {\cite[Theorem 5.6]{continuumtheory}}]
  \label{thm:boundarybumping}
 Let $X$ be a nonempty, compact, connected metric space, and let
  $E\subsetneq X$ be nonempty. If $C$ is a connected component of $E$, then
  $\partial C \cap \partial E\neq\emptyset$ (where boundaries are taken
  relative to $X$). 
\end{thm}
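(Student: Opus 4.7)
My plan is to argue by contradiction after a quick reformulation. Since $C\subseteq E$ one has $\overline{X\setminus E}\subseteq \overline{X\setminus C}$ and $\overline{C}\subseteq \overline{E}$, so a direct check yields $\partial C\cap \partial E = \overline{C}\cap \overline{X\setminus E}$ (boundaries and closures taken in $X$); it therefore suffices to show that $\overline{C}$ and $\overline{X\setminus E}$ must meet. Suppose for contradiction they do not. As $X$ is a compact metric (hence normal) space, I choose disjoint open sets $U\supseteq\overline{C}$ and $V\supseteq\overline{X\setminus E}$. The set $K:=X\setminus V$ is closed in $X$ and satisfies $U\subseteq K\subseteq E$: the first inclusion holds because $U\cap V=\emptyset$, the second because $X\setminus E\subseteq V$.

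Next I would observe that $C$ remains a component of $K$: since $C$ is connected and contained in $K$, it sits inside some component $C'$ of $K$, which is in turn a connected subset of $E$ containing $C$, so maximality of $C$ as a component of $E$ forces $C'=C$. At this point I would invoke the standard continuum-theoretic fact that in a compact Hausdorff space every component coincides with its quasi-component; equivalently, for any open neighborhood (in $K$) of a component of $K$ there is a relatively clopen subset of $K$ sandwiched in between. Applying this to the neighborhood $U\cap K$ of $C$ produces $V_0$ clopen in $K$ with $C\subseteq V_0\subseteq U$.

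To finish, I would observe that $V_0$ is closed in $X$ (being closed in $K$, with $K$ closed in $X$) and also open in $X$: being open in $K$ means $V_0=K\cap W$ for some open $W\subseteq X$, and combined with $V_0\subseteq U\subseteq K$ this gives $V_0=U\cap W$, which is open in $X$. Thus $V_0$ is a nonempty clopen subset of the connected space $X$, so $V_0=X$, contradicting $V_0\subseteq E\subsetneq X$. The nontrivial ingredient is the component/quasi-component equivalence in compact Hausdorff spaces, which is where the compactness of $X$ becomes essential.
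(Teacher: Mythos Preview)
Your argument is correct. The reformulation $\partial C\cap\partial E=\overline{C}\cap\overline{X\setminus E}$ is valid, the normality step and the passage to the compact set $K$ are clean, and the key invocation of the equality of components and quasi-components in compact Hausdorff spaces (applied to $K$, which is compact as a closed subset of $X$) yields the clopen $V_0$ exactly as you describe. The final openness argument for $V_0$ in $X$ via $V_0=U\cap W$ is a nice touch.

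As for comparison: the paper does not supply its own proof of this theorem. It is quoted from Nadler's \emph{Continuum Theory} (Theorem~5.6) as a standard background result, together with a remark on the special case actually used (namely $X\subset\widehat{\mathbb{C}}$ compact connected with $\infty\in X$ and $E=X\cap\mathbb{C}$). Your proof is essentially the textbook one, with the same underlying engine: the component/quasi-component coincidence in compact Hausdorff spaces.
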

 \begin{remark}
   We apply this theorem only in the case where $X\subset\Ch$ is a 
    compact connected set containing $\infty$, and $E=X\cap\C$. In this case,
    the theorem states that every component of $E$ is unbounded. 
 \end{remark}

\begin{thm}[Order characterization of an arc {\cite[Theorems 6.16 \& 6.17]{continuumtheory}}]
  \label{thm:arccharacterization}
 Let $X$ be a nonempty, compact, connected metric space. Suppose that
  there is a total ordering $\prec$ on $X$ such that the order topology
  of $(X,\prec)$ agrees with the metric topology of $X$. Then 
  either $X$ consists of a single point or
  there is an order-preserving homeomorphism from $X$ to the unit interval
  $[0,1]$.
\end{thm}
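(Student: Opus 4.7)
The plan is to construct an order-preserving homeomorphism $\phi: X \to [0,1]$ by first building it on a countable dense subset and then extending by completeness.

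First I would show $X$ has a minimum and a maximum. Since the sets $U_a := \{x \in X : x \prec a\}$ are open in the order topology, they are open in the metric topology; if there were no minimum these would cover $X$, and since they are linearly nested compactness would force one to equal $X$, a contradiction. Call the minimum $x_0$ and (by symmetry) the maximum $x_1$. Next I would verify that $X$ is \emph{order-dense}: if $a \prec b$ with no point strictly between, then $\{x : x \preceq a\}$ and $\{x : x \succeq b\}$ are disjoint, nonempty, closed (as complements of open rays), and cover $X$, contradicting connectedness. I would also observe that $X$ is \emph{order-complete}: for any nonempty $S \subseteq X$, the closed sets $\{x \in X : x \succeq s\}$ for $s \in S$ form a family with the finite intersection property, and compactness produces $\sup S \in X$.

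Since $X$ is a compact metric space it is separable; let $D$ be a countable dense subset. By the agreement of metric and order topologies, every nonempty open order interval is metrically open and hence meets $D$, so $D$ is order-dense in $X$. By Cantor's classical back-and-forth theorem, any two countable linearly ordered sets that are densely ordered without endpoints are order-isomorphic; applied to $D \setminus \{x_0, x_1\}$ and $\mathbb{Q} \cap (0,1)$, this yields an order-isomorphism $\phi_0$. Extend it to $\phi : X \to [0,1]$ by $\phi(x_0) := 0$, $\phi(x_1) := 1$, and
\[
   \phi(x) := \sup\{\phi_0(d) : d \in D \setminus \{x_0,x_1\},\, d \prec x\}
\]
otherwise. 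Order-density of $D$ makes $\phi$ strictly order-preserving, while order-completeness of $X$ together with Dedekind-completeness of $[0,1]$ makes $\phi$ a bijection.

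To conclude, I would note that any order-preserving bijection between totally ordered spaces is a homeomorphism when both carry the order topology, since it sends basic open intervals to basic open intervals. By hypothesis the order topology on $X$ agrees with its metric topology, and the same holds trivially for $[0,1]$, so $\phi$ is the desired homeomorphism. The main obstacle will be verifying that $\phi$ is actually a bijection onto all of $[0,1]$: surjectivity requires that every Dedekind cut of $\mathbb{Q} \cap (0,1)$ corresponds via $\phi_0^{-1}$ to a supremum in $X$, which is precisely where the order-completeness established from compactness is indispensable, and injectivity requires the order-density of $D$ to separate distinct points of $X$.
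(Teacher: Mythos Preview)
Your argument is essentially correct, but note that the paper does not actually prove this theorem: it is cited from Nadler's \textit{Continuum Theory} as a known result, and the accompanying remark only indicates that it follows from the non-cut-point characterization of the arc (a nondegenerate compact connected metric space is an arc if and only if it has exactly two non-cut-points). Your direct construction via Cantor's back-and-forth theorem and Dedekind completion is a valid and self-contained alternative; it trades the cut-point machinery for elementary order-theoretic reasoning, which is arguably cleaner here since the total order is already handed to you by hypothesis.

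One small slip worth fixing: to show $X$ has a minimum you use the rays $U_a = \{x : x \prec a\}$ and claim they cover $X$ if no minimum exists. But the absence of a minimum says every point has something \emph{below} it, which does not put $x$ into any $U_a$. You should use the upward rays $\{x : a \prec x\}$ instead, or equivalently apply the finite intersection property to the closed down-sets $\{x : x \preceq a\}$. This is a trivial correction and the rest of your argument goes through unchanged.
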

\begin{remark}
 This result follows from the perhaps
  better-known \emph{non-cut-point characterization}
  of the arc: a compact, connected metric space 
  is homeomorphic to an arc if and only if it has
  exactly two non-cut-points. Conversely, this characterization also
  follows from Theorem \ref{thm:arccharacterization}; 
  see \cite[Theorem 6.16]{continuumtheory}
  for details. 
\end{remark}

\providecommand{\href}[2]{#2}\def\polhk#1{\setbox0=\hbox{#1}{\ooalign{\hidewid%
th \lower1.5ex\hbox{`}\hidewidth\crcr\unhbox0}}}
\input{cyracc.def} 
  \newfont{\cyrit}{wncyi10 at 12pt}\def\cprime{$'$}
\providecommand{\bysame}{\leavevmode\hbox to3em{\hrulefill}\thinspace}



\begin{thebibliography}{McM}

\bibitem[AO]{aartsoversteegen}
Jan~M. Aarts and Lex~G. Oversteegen, \emph{The geometry of {J}ulia sets},
  Trans. Amer. Math. Soc. \textbf{338} (1993), no.~2, 897--918.

\bibitem[A]{ahlforsconformal}
Lars~V. Ahlfors, \emph{Conformal invariants: topics in geometric function
  theory}, McGraw-Hill Book Co., New York, 1973, McGraw-Hill Series in Higher
  Mathematics.

\bibitem[AL]{avilalyubichfeigenbaum2}
Artur Avila and Mikhail Lyubich, \emph{{H}ausdorff dimension and conformal
  measures of {F}eigenbaum {J}ulia sets}, J. Amer. Math. Soc. \textbf{21}
  (2008), no.~2, 305--363,
  \mbox{\href{http://www.arXiv.org/abs/math.DS/0408290}{arXiv:math.DS/0408290}%
}.

\bibitem[Ba1]{baranskihausdorff}
Krzysztof Bara{\'n}ski,
  \emph{\href{http://www.mimuw.edu.pl/~baranski/dimpar.pdf}{{H}ausdorff
  dimension of hairs and ends for entire maps of finite order}}, preprint,
  2006.

\bibitem[Ba2]{baranskihyperbolic}
\bysame, \emph{Trees and hairs for some hyperbolic entire maps of finite
  order}, Math. Z. \textbf{257} (2007), no.~1, 33--59.


\bibitem[Be1]{waltermero}
Walter Bergweiler, \emph{Iteration of meromorphic functions}, Bull. Amer. Math. Soc.
  (N.S.) \textbf{29} (1993), no.~2, 151--188,
  \mbox{\href{http://www.arXiv.org/abs/math.DS/9310226}{arXiv:math.DS/9310226}%
}.

\bibitem[Be2]{waltervorlesung}
\bysame, \emph{Komplexe Dynamik}, Lecture notes, fall 2008,
  http://analysis.math.uni-kiel.de/vorlesungen/dynamik.08/Gesamt.pdf.


\bibitem[BE]{walteralexsingularities}
Walter Bergweiler and Alexandre Eremenko, \emph{On the singularities of the
  inverse to a meromorphic function of finite order}, Rev. Mat. Iberoamericana
  \textbf{11} (1995), no.~2, 355--373.

\bibitem[BH]{walteraimo}
Walter Bergweiler and Aimo Hinkkanen, \emph{On semiconjugation of entire
  functions}, Math. Proc. Cambridge Philos. Soc. \textbf{126} (1999), no.~3,
  565--574.

\bibitem[BKS]{bergweilerkarpinskastallard}
Walter Bergweiler, Bogus{\l}awa Karpi\'nska, and Gwyneth~M. Stallard, \emph{The
  growth rate of an entire function and the hausdorff dimension of its julia
  set}, Preprint, 2008,
  \mbox{\href{http://www.arXiv.org/abs/0807.2363}{arXiv:0807.2363}}.


\bibitem[C]{marksurvey}
Mark Comerford, \emph{A survey of results in random iteration}, Fractal
  geometry and applications: a jubilee of Beno\^\i t Mandelbrot. Part 1, Proc.
  Sympos. Pure Math., vol.~72, Amer. Math. Soc., Providence, RI, 2004,
  pp.~435--476.

\bibitem[DGH]{dgh}
Robert~L. Devaney, Lisa~R. Goldberg, and John~H. Hubbard, \emph{A dynamical
  approximation to the exponential map by polynomials}, Preprint, MSRI
  Berkeley, 1986.

\bibitem[DK]{devaneykrych}
Robert~L. Devaney and Micha{\l} Krych, \emph{Dynamics of {${\rm exp}(z)$}},
  Ergodic Theory Dynam. Systems \textbf{4} (1984), no.~1, 35--52.

\bibitem[DT]{devaneytangerman}
Robert~L. Devaney and Folkert Tangerman, \emph{Dynamics of entire functions
  near the essential singularity}, Ergodic Theory Dynam. Systems \textbf{6}
  (1986), no.~4, 489--503.

\bibitem[DH]{orsay}
Adrien Douady and John Hubbard, \emph{Etude dynamique des polyn{\^o}mes
  complexes}, Pr{\'e}publications math{\'e}mathiques d'Orsay (1984 / 1985),
  no.~2/4.

\bibitem[E]{alexescaping}
Alexandre~{\`E}. Eremenko, \emph{On the iteration of entire functions},
  Dynamical systems and ergodic theory (Warsaw, 1986), Banach Center Publ.,
  vol.~23, PWN, Warsaw, 1989, pp.~339--345.

\bibitem[EL1]{alexmishaexamples}
A.~{\`E}. Eremenko and Mikhail~Yu. Lyubich, 
  \emph{Examples of entire functions with
  pathological dynamics}, J. London Math. Soc. (2) \textbf{36} (1987), no.~3,
  458--468.

\bibitem[EL2]{alexmisha}
Alexandre~{\`E}. Eremenko and Mikhail~Yu. Lyubich, \emph{Dynamical properties
  of some classes of entire functions}, Ann. Inst. Fourier (Grenoble)
  \textbf{42} (1992), no.~4, 989--1020.

\bibitem[F]{fatou}
Pierre Fatou, \emph{Sur l'it\'eration des fonctions transcendantes enti\`eres},
  Acta Math. \textbf{47} (1926), 337--370.

\bibitem[GE]{eremenkogoldberg}
A.~A. Gol{\cprime}dberg and A.~{\`E}. Eremenko, \emph{Asymptotic curves of
  entire functions of finite order}, Mat. Sb. (N.S.) \textbf{109(151)} (1979),
  no.~4, 555--581, 647.

\bibitem[H]{haymansubharmonic2}
W.~K. Hayman, \emph{Subharmonic functions. {V}ol. 2}, London Mathematical
  Society Monographs, vol.~20, Academic Press Inc. [Harcourt Brace Jovanovich
  Publishers], London, 1989.

\bibitem[K]{Karpinska}
Bogus{\l}awa Karpi{\'n}ska, \emph{Hausdorff dimension of the hairs without
  endpoints for {$\lambda\exp z$}}, C. R. Acad. Sci. Paris S\'er. I Math.
  \textbf{328} (1999), no.~11, 1039--1044.

\bibitem[McM]{hausdorffmcmullen}
Curtis~T. McMullen, \emph{Area and {H}ausdorff dimension of {J}ulia sets of
  entire functions}, Trans. Amer. Math. Soc. \textbf{300} (1987), no.~1,
  329--342.

\bibitem[M]{jackdynamicsthird}
John Milnor, \emph{Dynamics in one complex variable}, third ed., Annals of
  Mathematics Studies, vol. 160, Princeton University Press, Princeton, NJ,
  2006.

\bibitem[N]{continuumtheory}
Sam~B. Nadler, Jr., \emph{Continuum theory. {A}n introduction}, Monographs and
  Textbooks in Pure and Applied Mathematics, vol. 158, Marcel Dekker Inc., New
  York, 1992.

\bibitem[P]{pommerenke}
Christian Pommerenke, \emph{Boundary behaviour of conformal maps}, Grundlehren
  der Mathematischen Wissenschaften, vol. 299, Springer-Verlag, Berlin, 1992.

\bibitem[R1]{eremenkoproperty}
Lasse Rempe, \emph{On a question of {E}remenko concerning escaping sets of entire
  functions}, Bull. London Math. Soc. \textbf{39} (2007), no.~4, 661--666,
  \mbox{\href{http://arxiv.org/abs/math.DS/0610453}{arXiv:math.DS/0610453}}.


\bibitem[R2]{boettcher}
\bysame, \emph{Rigidity of escaping dynamics for transcendental entire
  functions}, Preprint, 2006,
  \mbox{\href{http://arxiv.org/abs/math.DS/0605058}{arXiv:math.DS/0605058}}, to
  appear in Acta Math.


\bibitem[ReS]{escapingdim}
Lasse Rempe and Gwyneth~M. Stallard, \emph{Hausdorff dimensions of escaping
  sets of transcendental entire functions}, Preprint, 2009.

\bibitem[RiS]{ripponstallardfatoueremenko}
Philip~J. Rippon and Gwyneth~M. Stallard, \emph{On questions of {F}atou and
  {E}remenko}, Proc. Amer. Math. Soc. \textbf{133} (2005), no.~4, 1119--1126.

\bibitem[Ro]{guenterthesis}
G\"unter Rottenfu{\ss}er,
  \emph{\href{http://www.iu-bremen.de/phd/files/1123596586.pdf}{
  Dynamical Fine Structure of entire transcendental functions}}, doctoral
  thesis, International University Bremen, 2005.

\bibitem[RoS]{guenterdierk}
G\"unter Rottenfu{\ss}er and Dierk Schleicher, \emph{Escaping points of the
  cosine family}, Preprint, 2004,
  \mbox{\href{http://www.arXiv.org/abs/math.DS/0403012}{arXiv:math.DS/0403012}%
}.


\bibitem[S1]{dierkcosine}
Dierk Schleicher, 
  \emph{The dynamical fine structure of iterated cosine maps and a
  dimension paradox}, Duke Math. J. \textbf{136} (2007), no.~2, 343--356,
  \mbox{\href{http://www.arXiv.org/abs/math.DS/0406255}{arXiv:math.DS/0406255}%
}.

\bibitem[S2]{dierkmonthly}
\bysame, \emph{Hausdorff dimension, its properties, and its surprises}, Amer.
  Math. Monthly \textbf{114} (2007), no.~6, 509--528,
  \mbox{\href{http://www.arXiv.org/abs/math/0505099}{arXiv:math/0505099}}.

\bibitem[S3]{dierkentire}
\bysame, \emph{Dynamics of entire functions}, In: Proceedings of the
  CIME Summer School, July 2008; Springer Verlag, to appear.


\bibitem[SZ]{expescaping}
Dierk Schleicher and Johannes Zimmer, \emph{Escaping points of exponential
  maps}, J. London Math. Soc. (2) \textbf{67} (2003), no.~2, 380--400.

\bibitem[UZ]{urbanskizdunik1}
Mariusz Urba{\'n}ski and Anna Zdunik,
  \emph{\href{http://www.math.unt.edu/~urbanski/papers/z2011703.ps}{The finer
  geometry and dynamics of the hyperbolic exponential family}}, Michigan Math.
  J. \textbf{51} (2003), no.~2, 227--250.

\bibitem[Z]{zalcmannormal}
Lawrence Zalcman, \emph{Normal families: new perspectives}, Bull. Amer. Math.
  Soc. (N.S.) \textbf{35} (1998), no.~3, 215--230.


\end{thebibliography}

\end{document}